\newtheorem{theorem}{Theorem}[section]
\newtheorem{lemma}[theorem]{Lemma}
\newtheorem{corollary}[theorem]{Corollary}
\newtheorem{proposition}[theorem]{Proposition}
\newtheorem{assumption}[theorem]{Assumption}
\theoremstyle{definition}
\newtheorem{definition}[theorem]{Definition}
\newtheorem*{example*}{Example}
\newtheorem{remark}[theorem]{Remark}
\numberwithin{equation}{section}
\DeclareMathOperator*{\deltalim}{\Delta\text{\normalfont\textbf{--}}\lim}
\newcommand{\loc}{\textup{loc}}
\newcommand{\prox}{\operatorname{prox}}
\newcommand{\err}{\texttt{err}}
\begin{document}

\title[Finite Element Approximation of Data-Driven Problems]{Finite Element Approximation of 
Data-Driven Problems in Conductivity}
\thanks{This research was supported by the German Research Foundation (DFG) under grant 
number~ME~3281/10-1.}

\author{Annika M\"uller} \address{Technische Universit\"at Dortmund, Fakult\"at f\"ur
  Mathematik, Lehrstuhl LSX, Vogelpothsweg 87, 44227 Dortmund, Germany}
\email{annika.mueller@tu-dortmund.de}

\author{Christian Meyer} \address{Technische Universit\"at Dortmund, Fakult\"at f\"ur
  Mathematik, Lehrstuhl LSX, Vogelpothsweg 87, 44227 Dortmund, Germany}
\email{christian2.meyer@tu-dortmund.de}

\subjclass[2010]{65N30, 65K10, 49M41} 
\date{\today} 
\keywords{Data driven models, Raviart Thomas finite elements, data convergence, proximal gradient method}

\begin{abstract} 
    This paper is concerned with the finite element discretization of the data driven approach according to \cite{KirchdoerferOrtiz2016} 
    for the solution of PDEs with a material law arising from measurement data.
    To simplify the setting, we focus on a scalar diffusion problem instead of a problem in elasticity. 
    It is proven that the data convergence analysis from \cite{CMO2018} carries over to the finite element discretization 
    as long as $H(\div)$-conforming finite elements such as the Raviart-Thomas element are used. 
    As a corollary, minimizers of the discretized problems converge in data in the sense of \cite{CMO2018},
    as the mesh size tends to zero and the approximation of the local material data set gets more and more accurate.
    We moreover present several heuristics for the solution of the discretized data driven problems, which 
    is equivalent to a quadratic semi-assignment problem and therefore NP-hard. 
    We test these heuristics by means of two examples and it turns out that the ``classical'' alternating projection method 
    according to \cite{KirchdoerferOrtiz2016} is superior w.r.t.\ the ratio of accuracy and computational time. 
\end{abstract}

\maketitle


\section{Introduction}

In material science, empirically developed material models are commonly in use, i.e., 
material laws that describe the behavior of materials are derived from measured data. 
But, due to measuring errors and simplified models, this approach bears the risk of inaccuracies. 
For this reason, an alternative data-driven concept has been established in \cite{KirchdoerferOrtiz2016}. 
In a sense, this concept skips the modeling step and uses the measured data directly. 
The idea is to select that data point from the set of measurements that best fits 
axiomatic physical laws such as first principles. 

Let us 	explain this data-driven approach in terms of a stationary diffusion process of the form
\begin{equation}\label{SDP}
	- \div \kappa (\nabla u) = f \;\text{ in } \Omega, \quad u =0 \;\text{ on } \Gamma := \partial\Omega. 
\end{equation}
Here and in the following, $\Omega \subset \R^d$, $d\in \N$, is a bounded domain, 
$f \in H^{-1}(\Omega) \coloneqq H^1_0(\Omega)^*$, 
and $\div : L^2(\Omega; \R^d) \to H^{-1}(\Omega)$ denotes the distributional divergence.
Furthermore, $\kappa : \R^d \to \R^d$ is a given function which models the material law
and is calibrated by $m\in \N$ measurements for the tuple $(\bq,\nabla u)$ collected in the so-called 
\emph{local material data set}
\begin{equation}\label{eq:Dloc}
    \DD^{\loc} \coloneqq \{ (\br_1, \bw_1),\dots,(\br_m,\bw_m) \}  \subset \R^d \times \R^d.
\end{equation}
As indicated above, the idea is now to use these measurements directly. 
For this purpose, we rewrite \eqref{SDP} equivalently as 
\begin{equation*}
    \eqref{SDP} 
    \quad \Longleftrightarrow \quad 
    (\bq, \nabla u) \in \widetilde\DD\times \EE,
\end{equation*}
with the so-called \emph{equilibirum set}
\begin{equation}\label{eq:defequi}
    \EE \coloneqq \{ (\bq, \nabla u ) \in L^2(\Omega; \R^d) \times L^2(\Omega; \R^d) : 
    u \in H^1_0(\Omega),\;  - \div \sigma = f \}
\end{equation}
and the \emph{material law set}
\begin{equation}
    \widetilde\DD \coloneqq \{ (\br, \bw) \in L^2(\Omega; \R^d) \times L^2(\Omega; \R^d) : 
    \br(x) = \kappa(\bw(x)) \text{ a.e.\ in } \Omega\}.
\end{equation}
The data-driven approach now skips the modelling step and uses the measured data directly by 
replacing $\widetilde\DD$ with the \emph{material data set}
\begin{equation*}
    \DD \coloneqq \{ (\br, \bw) \in L^2(\Omega; \R^d) \times L^2(\Omega; \R^d) : 
    (\br, \bw) \in \DD^{\loc} \text{ a.e.\ in } \Omega\}
\end{equation*}
where $\DD^\loc$ is the collection of measurements from \eqref{eq:Dloc}.
Due to measurement errors and limited measuring capacities, the intersection 
$\DD \cap \EE$ is usually empty. One therefore resorts to a minimization problem of the form
\begin{equation} \tag{DDP} \label{DDP}
    \left.
    \begin{aligned}
        \min_{(y,z) \in Z \times Z} \quad &  \| y - z \|_Z^2 \\
        \text{s.t.} \quad \quad & y \in \DD, \, z \in \EE,
    \end{aligned}
    \quad \right\}
\end{equation}
i.e., one searches for two elements of the sets $\EE$ and $\DD$ that have smallest distance to each other. 
Herein, we abbreviated $Z := L^2(\Omega;\R^d) \times L^2(\Omega;\R^d)$.

The optimization problem \eqref{DDP} is frequently called \emph{data-driven problem}
and gives rise to several questions and issues:
First of all, while $\EE$ is easily seen to be weakly closed, the set $\DD$ is in general not.
Hence, \eqref{DDP} does not necessarily admit a solution. 
Moreover, a natural question arising in context of \eqref{DDP} is its behavior for measurements 
getting more and more accurate. Does the arising data-driven limit recover the ``true'' material law 
and, if so, in which sense? 
Moreover, a numerical solution of \eqref{DDP} requires a discretization of \eqref{DDP} and 
one may ask how a discretization influences this data-driven limit. 
Finally, the material data set $\DD$ involves a discrete point set such that \eqref{DDP} is
a mixed-integer optimization problem. Problems of this type are typically hard to handle
such that the development of efficient optimization algorithms for \eqref{DDP} (and its discretized 
counterpart) is all but trivial.

With this work, we address the two latter questions, i.e., 
we discuss discretization schemes and optimization algorithms for the solution of \eqref{DDP}. 
Let us put our work into perspective. 
In engineering science, the data-driven approach is meanwhile well accepted and has been applied 
to various problems, in particular in solid mechanics, we only refer to 
\cite{KirchdoerferOrtiz2016, Kirchdoerfer2017, NK18, Eggersmann2019, Carrara2020} for examples
from elasticity, inelasticity, dynamics, and fracture mechanics.
A rigorous mathematical analysis of this approach has only been initiated recently in \cite{CMO2018}, 
where the concept of \emph{data convergence} has been introduced. This notion of convergence 
is especially tailored to the structure of \eqref{DDP} and allows to characterize the data-driven limit. 
In this way, it answers the above question what happens, if the measurement errors tend to zero.
The precise characterization of data-driven limits strongly depends on the particular structure of $\DD$ and $\EE$. 
This notion of convergence and the characterization of the associated limits
have been investigated for several scenarios, we exemplarily refer to in \cite{CMO2018, CMO2020, RS20}.
To the best of our knowledge however, the discretization of $\EE$ and $\DD$ has not been 
incorporated into this convergence analysis so far and with this work, we aim to fill this gap.
This is an important issue, not only because 
\eqref{DDP} cannot be solved in infinite dimensional spaces, but also due to the general lack of existence of 
solutions to \eqref{DDP}. If one turns to a discretized counterpart of \eqref{DDP}, then the finite dimensional 
structure allows to establish the existence of optimal solutions under mild assumptions
so that, not until then, it makes sense to look for efficient algorithms for their computation.

As already indicated, the design of reliable and efficient solvers for \eqref{DDP} (and its discretization, 
respectively) is a delicate issue due to the discrete structure of the local material data set.
In \cite{KirchdoerferOrtiz2016}, a fixed-point type heuristic based on projections has been introduced, which 
is able to handle extensive measurement data, but may fail to converge or converges to 
spurious fixed-points that are not optimal, as shown in \cite{Kanno2019}. 
In the latter reference, a standard mixed-integer programming solver his employed to solve a data-driven problem
of (unrealistically) small size. Due to the vast amount of measurement data, it is in principle impossible 
to use exact mixed-integer programming solvers for the solution of \eqref{DDP}. 
Therefore, various heuristics have been developed and applied such as 
as kernel regression \cite{Kanno2018}, local regression \cite{Kanno_Local_Regression2018}, 
tensor voting \cite{Eggersmann2021}, and neural networks \cite{PBS21}.
In the second part of the paper, we present some new heuristics and compare them with existing methods. 
Some of our algorithms are based on projection heuristic from \cite{KirchdoerferOrtiz2016}, but we also 
tested a method, which employs an exact mixed-integer solver in combination with a local search 
algorithm.

We point out that we restrict ourselves to the conductivity example from \eqref{SDP} in order to keep 
the discussion as concise as possible. An extension of the finite element convergence analysis 
as well as the algorithmic approaches to problems in elasticity should be possible and is subject to future research.

The plan of the paper reads as follows: After introducing our standing assumptions and some well known 
results from saddle point theory in Section~\ref{sec:prelim}, we focus on the discretization of the 
equilibrium set by means of Raviart-Thomas type finite elements in Section~\ref{sec:RT}. 
Afterwards, in Section~\ref{sec:data}, we recall the notion of 
data convergence from \cite{CMO2018} and adapt it to our setting. 
Section~\ref{sec:conv} is then devoted to our main results, incorporating the finite element discretization 
of $\EE$ and $\DD$ into the data convergence analysis. In Section~\ref{sec:whyHdiv}, we 
discuss the need for $H(\div)$-conforming finite elements like the Raviart-Thomas element for 
the discretization of \eqref{DDP}. 
Section~\ref{sec:algo} is dedicated to the algorithms and their implementation and finally, 
in Section~\ref{sec:numerics}, we present some numerical results.


\section{Preliminaries and Standing Assumptions}\label{sec:prelim}

As usual we define 
\begin{equation*}
    H(\div) := \{ \bw \in L^2(\Omega;\R^d) \colon \div \bw \in L^2(\Omega)\},
\end{equation*}
where $\div: L^2(\Omega;\R^d) \to H^{-1}(\Omega)$ denotes the distributional divergence.
The following two lemmas concerning the space $H(\div)$ will be useful in the rest of the paper. 
For their proofs, we refer to \cite{temam}.

\begin{lemma}\label{lem:Temam}
    If the complement of $\overline{\Omega}$ satisfies the cone condition according to \cite{temam} , 
    then $C^\infty(\overline{\Omega};\R^d)$ is dense in $H(\div)$. 
\end{lemma}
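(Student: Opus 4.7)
The plan is a classical localization--translation--mollification argument, relying on the exterior cone condition to execute the translation in an $H(\div)$-compatible way near the boundary.

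I would first cover $\overline\Omega$ by finitely many open sets $U_0, U_1, \ldots, U_N$, where $U_0 \Subset \Omega$ and each $U_i$ ($i\geq 1$) is a boundary patch small enough that the cone condition on $\R^d\setminus\overline\Omega$ provides a single unit vector $v_i$, height $h_i>0$, and half-angle $\theta_i>0$ such that the translated cone $x + C(v_i,h_i,\theta_i)$ lies in $\R^d\setminus\overline\Omega$ for every $x\in\overline{U_i}\cap\partial\Omega$. A subordinate partition of unity $\{\varphi_i\}_{i=0}^N$ and the product rule
\[
    \div(\varphi_i \bw) = \varphi_i \div\bw + \nabla\varphi_i\cdot\bw
\]
show $\varphi_i \bw \in H(\div)$, so it suffices to approximate each summand in $C^\infty(\overline\Omega;\R^d)$ and recombine.

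The interior piece $\varphi_0 \bw$ has compact support in $\Omega$, so its zero extension lies in $H(\div;\R^d)$ and standard mollification yields $C_c^\infty(\Omega;\R^d) \subset C^\infty(\overline\Omega;\R^d)$ approximants converging in $H(\div)$. For a boundary piece $\bw^{(i)} := \varphi_i \bw$, direct mollification fails because the zero extension generically has a surface Dirac in its divergence along $\partial\Omega$. The remedy is to translate before mollifying: for small $\tau > 0$, define $\bw^{(i)}_\tau(x) := \bw^{(i)}(x - \tau v_i)$. Because $v_i$ lies in the exterior cone uniformly along the patch, the translation moves the support of $\bw^{(i)}$ so that a buffer of width $\sim\tau\sin\theta_i$ separates the relevant boundary of the translated set from $\partial\Omega$; convolving with a mollifier $\rho_\delta$ of width $\delta$ smaller than this buffer then yields a function that is smooth on a full neighborhood of $\overline{U_i}\cap\overline\Omega$, and in particular lies in $C^\infty(\overline\Omega;\R^d)$.

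The $H(\div)$-convergence as $\tau,\delta \to 0$ (with $\delta$ chosen small compared to $\tau\sin\theta_i$) is then immediate: translation and mollification commute with the distributional divergence, so
\[
    \div\bigl(\rho_\delta \ast \bw^{(i)}_\tau\bigr) = \rho_\delta \ast (\div\bw^{(i)})_\tau,
\]
and since $\bw^{(i)},\div\bw^{(i)}\in L^2$, continuity of translation in $L^2$ together with the standard mollifier estimates give $L^2$-convergence of both the function and its divergence. Summing over $i$ yields the desired approximation. The main obstacle is precisely the step where the cone condition is invoked: one must verify that the translated function, suitably interpreted near $\partial\Omega$, genuinely lies in $H(\div)$ of a neighborhood of $\overline{U_i}\cap\overline\Omega$ with no leftover boundary term, which comes down to showing that the exterior cone furnishes enough uniform ``room'' to separate the shifted support from the troublesome part of $\partial\Omega$. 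Once this geometric bookkeeping is in place, the remaining mollifier calculus is entirely routine.
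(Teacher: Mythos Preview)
The paper does not give its own proof of this lemma; it simply refers to \cite{temam}. Your localization--translation--mollification argument is precisely the classical proof found there (and in Girault--Raviart), so your proposal is correct and matches the intended reference.
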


	\begin{lemma}\label{lem:continuoussaddle}
		Let $F \in H(\div)^*$ and $f \in L^2(\Omega)$ be given. 
		Then there exists a unique solution $(\btau, \lambda) \in H(\div) \times L^2(\Omega)$ to the saddle point problem
		\begin{subequations}\label{eq:continuoussaddle}
			\begin{alignat}{3}
				& \int_\Omega \btau \cdot \bw \, dx + \int_\Omega \lambda \div \bw \, dx & \;
				= \; & \langle F , \bw \rangle \quad && \forall \, \bw \in H(\div), \label{eq:continuoussaddle_a} \\
				& - \int_\Omega v \div \btau \, dx & 
				= \; & \int_\Omega f v \, dx \quad && \forall v \in L^2(\Omega). \label{eq:continuoussaddle_b}
			\end{alignat}
		\end{subequations}
		If $\Omega$ satisfies the regularity assumptions from Lemma~\ref{lem:Temam} and $F \in L^2(\Omega; \R^d)$, 
		then $\lambda \in H^1_0(\Omega)$. If moreover $\Omega$ is $H^2$-regular and $F \in H^1(\Omega;\R^d)$, 
		then $\btau \in H^1(\Omega;\R^d)$ and
		\begin{align*}
			\| \btau \|_{H^1(\Omega; \R^d)} \leq c ( \| f \|_{L^2(\Omega)} + \| F \|_{H^1(\Omega; \R^d)} )
		\end{align*}
		with a constant $c>0$, which only depends on $\Omega$.
	\end{lemma}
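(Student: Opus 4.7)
The plan is to apply the classical Brezzi theory for mixed problems to get existence and uniqueness, and then to bootstrap regularity by reading the first equation as an identification of $\nabla\lambda$.

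\smallskip

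\noindent\textbf{Step 1 (Existence and uniqueness via Brezzi).} I would set up $a(\btau,\bw) = \int_\Omega \btau\cdot\bw\,dx$ on $H(\div)\times H(\div)$ and $b(\bw,v) = \int_\Omega v\,\div\bw\,dx$ on $H(\div)\times L^2(\Omega)$. Continuity of both is immediate. Coercivity of $a$ on $\ker b = \{\bw\in H(\div):\div\bw=0\}$ is trivial since there $a(\bw,\bw) = \|\bw\|_{L^2}^2 = \|\bw\|_{H(\div)}^2$. For the inf--sup condition on $b$, given $v\in L^2(\Omega)$ I would solve the Poisson problem $-\Delta\psi=v$ with homogeneous Dirichlet data and set $\bw = -\nabla\psi$; then $\div\bw = v$, $b(\bw,v)=\|v\|_{L^2}^2$, and $\|\bw\|_{H(\div)} \leq C\|v\|_{L^2}$ by the Poincaré/energy estimate. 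Brezzi's theorem then gives a unique $(\btau,\lambda)\in H(\div)\times L^2(\Omega)$.

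\smallskip

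\noindent\textbf{Step 2 ($H^1$-regularity of $\lambda$).} Assuming $F\in L^2(\Omega;\R^d)$, I would rewrite \eqref{eq:continuoussaddle_a} as
\begin{equation*}
    \int_\Omega \lambda \,\div\bw\,dx = \int_\Omega (F-\btau)\cdot\bw\,dx \qquad\forall\,\bw\in H(\div).
\end{equation*}
Testing first with $\bw\in C_c^\infty(\Omega;\R^d)$ identifies $\nabla\lambda = \btau - F$ in the distributional sense, and since the right-hand side is in $L^2(\Omega;\R^d)$, this yields $\lambda\in H^1(\Omega)$.

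\smallskip

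\noindent\textbf{Step 3 (Boundary trace $\lambda=0$ on $\partial\Omega$).} This is where Lemma~\ref{lem:Temam} enters. The density of $C^\infty(\overline{\Omega};\R^d)$ in $H(\div)$ justifies the generalized Green's formula
\begin{equation*}
    \int_\Omega \lambda\,\div\bw\,dx + \int_\Omega \nabla\lambda\cdot\bw\,dx = \langle \bw\cdot\bn,\lambda\rangle_{H^{-1/2}(\partial\Omega),H^{1/2}(\partial\Omega)}
\end{equation*}
for arbitrary $\bw\in H(\div)$. Inserting the identification from Step~2 and comparing with \eqref{eq:continuoussaddle_a}, the duality pairing vanishes for every $\bw\in H(\div)$. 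Since the normal trace $\bw\mapsto \bw\cdot\bn$ is surjective onto $H^{-1/2}(\partial\Omega)$, it follows that the $H^{1/2}$-trace of $\lambda$ is zero, i.e.\ $\lambda\in H^1_0(\Omega)$. I expect this step to be the main obstacle, since it is the only place where the regularity of $\Omega$ from Lemma~\ref{lem:Temam} is essential and where one has to combine the density argument with the surjectivity of the normal trace.

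\smallskip

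\noindent\textbf{Step 4 ($H^1$-regularity and estimate for $\btau$).} Assuming additionally $H^2$-regularity of $\Omega$ and $F\in H^1(\Omega;\R^d)$, equation \eqref{eq:continuoussaddle_b} gives $-\div\btau = f$ in $L^2(\Omega)$. Combined with $\btau = \nabla\lambda + F$ from Step~2 this yields the Poisson problem $-\Delta\lambda = f + \div F$ with $\lambda\in H^1_0(\Omega)$, whose right-hand side lies in $L^2(\Omega)$. $H^2$-regularity provides $\lambda\in H^2(\Omega)$ with
\begin{equation*}
    \|\lambda\|_{H^2(\Omega)} \leq c\,(\|f\|_{L^2(\Omega)} + \|F\|_{H^1(\Omega;\R^d)}).
\end{equation*}
Finally $\btau = \nabla\lambda + F \in H^1(\Omega;\R^d)$ with the claimed bound obtained from the triangle inequality.
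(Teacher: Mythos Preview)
Your proof is correct and follows the standard route (Brezzi theory for well-posedness, then reading \eqref{eq:continuoussaddle_a} as $\nabla\lambda=\btau-F$, recovering the zero trace via the $H(\div)$ Green formula and surjectivity of the normal trace, and finally invoking $H^2$-regularity for the Poisson problem $-\Delta\lambda=f+\div F$). The paper itself does not give a proof of this lemma at all; it simply states the result and refers to \cite{temam}, so your argument is in fact more detailed than what the paper provides, and it is precisely the argument one extracts from that reference.
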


\begin{assumption}\label{assu:domreg}
    Throughout this paper, we assume the following regularity assumptions on the domain
    and the inhomogeneity $f$:
    \begin{enumerate}[label=\textup{(\roman*)}]
        \item\label{it:domreg} The domain $\Omega$ is supposed to bounded and convex with a polygonal resp.\ 
        polyhedral boundary.
        \item The right-hand side in the divergence constraint satisfies $f\in L^2(\Omega)$.
    \end{enumerate}
\end{assumption}

\begin{remark}
    The regularity of the domain can be relaxed in the sense that we can drop the convexity, 
    see Remark~\ref{rem:domreg} below. 
    In contrast to this, we need the higher regularity of $f$ and cannot work with 
    inhomogeneities in $H^{-1}(\Omega)$, since our discretization requires $H(\div)$-conforming finite elements 
    as demonstrated in Section~\ref{sec:whyHdiv} below.    
\end{remark}

We underline that Assumption~\ref{assu:domreg} is tacitly supposed to hold throughout the rest of the paper 
without mentioning it every time.


\section{Discretization}\label{sec:RT}

The equilibrium constraint set $\EE$ from \eqref{eq:defequi} is discretized by 
\begin{equation}\label{eq:defEEh}
	\EE_h \coloneqq \{ (\bq_h, \nabla u_h) : \bq_h \in Q_h, u_h \in U_h, - \div_h \bq_h = f \},
\end{equation}
where $U_h$ and $Q_h$ are finite dimensional function spaces satisfying the following

\begin{assumption}\label{assu:discret}
    The discretization of $\EE$ in \eqref{eq:defEEh} is supposed to fulfill the following conditions:
    \begin{enumerate}[label=\textup{(\roman*)}]
        \item\label{it:conform}
        For all $h>0$, the discrete spaces $U_h$ and $Q_h$ are conforming, 
        i.e., they are finite dimensional linear subspaces of $H^1_0(\Omega)$ and $H(\div)$.
        Moreover, $\bigcup_{h > 0} U_h$ is dense in $H^1_0(\Omega)$ w.r.t.\ the $H^1(\Omega)$-norm.
        \item\label{it:divh}
        We set
        \begin{equation}\label{eq:defVh}
            V_h := \div(Q_h) \subset L^2(\Omega).
        \end{equation}
        Then the discrete divergence $\div_h : H(\div) \to V^*_h$ from \eqref{eq:defEEh} is defined by
        \begin{equation*}
            \langle \div_h \btau , v_h \rangle \coloneqq \int_\Omega v_h \div \btau \, dx, 
            \quad \btau \in H(\div), \, v_h \in V_h.
        \end{equation*}
        \item\label{it:interp}
        There exists an interpolation operator $\Pi_{h}: H^1(\Omega;\R^d) \to Q_{h}$ such that, 
        for all $\btau \in H^1(\Omega;\R^d)$ there holds
        \begin{equation}\label{eq:RTinterpol1}
            \div_h \btau  = \div_h \Pi_{h}\btau 
        \end{equation}
        and 
        \begin{equation}\label{eq:RTinterpol2}
            \Pi_h \btau \to \btau \quad \text{in } L^2(\Omega;\R^d)
            \text{ as } h \searrow 0.
        \end{equation}
    \end{enumerate}
\end{assumption}

Note that, in view of Assumtion~\ref{assu:discret}\ref{it:divh}, 
the constraint $- \div_h \bq_h = f$ in the definition of $\EE_h$ is short for
\begin{equation*}
	- \int_\Omega v_h \div \bq_h \, dx = \int_\Omega f \, v_h \, dx \quad \forall \, v_h \in V_h.
\end{equation*}
Assumption~\ref{assu:discret} implies the well known LBB-condition, as we shortly sketch in 
the following (by arguments analogous to the discussion after \cite[Lemma~5.4]{braess}). 
To this end, let $\bw\in H(\div)$ be arbitrary and solve \eqref{eq:continuoussaddle} 
with $(F, f) = (0, -\div \bw) \in H^1(\Omega;\R^d) \times L^2(\Omega)$. Now, since 
$\Omega$ is supposed to be convex and thus $H^2$-regular, 
we obtain a solution $(\btau, \lambda)$ with $\btau \in H^1(\Omega;\R^d)$. 
Thus we can apply the interpolation operator $\Pi_h$ from Assumption~\ref{assu:discret}\ref{it:interp}
and obtain 
\begin{equation}\label{eq:fortin}
    \div_h \bw = \div_h \btau = \div_h \Pi_h \btau.
\end{equation}
By construction, the mapping $H(\div) \ni \bw \mapsto \Pi_h \btau \in Q_h$ is 
linear and continuous and, in view of \eqref{eq:fortin}, it is a Fortin interpolation operator. 
Therefore, by \cite[4.8~Fortin's Criterion]{braess}, the tuple $(Q_h, V_h)$ satisfies the 
LBB-condition, i.e., we have shown the following:

\begin{corollary}\label{cor:fortin}
    Under Assumption~\ref{assu:discret}, $(Q_h, V_h)$ satiesfies the LBB-condition, i.e., 
    there exists a constant $\beta>0$, independent of $h>0$, such that
    \begin{equation}
        \inf_{v_h \in V_h} \sup_{\bw_h \in Q_h} 
        \frac{\int_\Omega v_h \div \bw_h \, dx}{\| \bw_h \|_{H(\div)} \|v_h \|_{L^2(\Omega)}} 
        \geq \beta.\label{eq:LBB}
    \end{equation}
\end{corollary}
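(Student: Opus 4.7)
The plan is to invoke Fortin's criterion (\cite[4.8~Fortin's Criterion]{braess}) to transfer the well-known continuous LBB inequality for the pair $(H(\div), L^2(\Omega))$ — which follows from the solvability of the mixed Poisson problem — to the discrete pair $(Q_h, V_h)$. To apply the criterion, I need to produce a family of linear operators $\mathcal{F}_h \colon H(\div) \to Q_h$ that commutes with the discrete divergence, in the sense that $\div_h \mathcal{F}_h \bw = \div_h \bw$ for every $\bw \in H(\div)$, and is bounded uniformly in $h$.

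The construction proceeds in two stages, following the sketch in the paragraph preceding the corollary. Given $\bw \in H(\div)$, I first apply Lemma~\ref{lem:continuoussaddle} with the right-hand side $(F,f) = (0, -\div \bw) \in H^1(\Omega;\R^d) \times L^2(\Omega)$. Since $\Omega$ is convex by Assumption~\ref{assu:domreg} and hence $H^2$-regular, the lemma provides a unique solution $(\btau,\lambda)$ with $\btau \in H^1(\Omega;\R^d)$ together with the estimate $\|\btau\|_{H^1} \le c\|\div \bw\|_{L^2} \le c\|\bw\|_{H(\div)}$. Testing \eqref{eq:continuoussaddle_b} with arbitrary $v \in L^2(\Omega)$ identifies $\div \btau = \div \bw$. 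In the second stage I set $\mathcal{F}_h \bw := \Pi_h \btau$, using the interpolation operator from Assumption~\ref{assu:discret}\ref{it:interp}. Linearity of $\mathcal{F}_h$ is inherited from the linearity of the saddle-point map $\bw \mapsto \btau$ and of $\Pi_h$, and the commutation relation \eqref{eq:RTinterpol1} gives at once $\div_h \mathcal{F}_h \bw = \div_h \Pi_h \btau = \div_h \btau = \div_h \bw$, which is precisely the Fortin identity \eqref{eq:fortin}.

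Once uniform boundedness $\|\mathcal{F}_h \bw\|_{H(\div)} \le C \|\bw\|_{H(\div)}$ is established, the classical continuous LBB bound combined with Fortin's criterion yields the discrete inequality \eqref{eq:LBB} with $\beta>0$ independent of $h$. The main technical obstacle here is the uniform continuity estimate for $\Pi_h \colon H^1(\Omega;\R^d) \to Q_h \subset H(\div)$, which is not spelled out explicitly in Assumption~\ref{assu:discret}\ref{it:interp} but is standard for Raviart--Thomas type interpolants: chaining it with the $H^1$-bound on $\btau$ from Lemma~\ref{lem:continuoussaddle} then delivers the required bound $\|\mathcal{F}_h \bw\|_{H(\div)} \le C'\|\bw\|_{H(\div)}$ with $C'$ independent of $h$, completing the verification.
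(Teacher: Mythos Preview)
Your argument is correct and follows essentially the same route as the paper: solve the continuous saddle-point problem with right-hand side $(0,-\div\bw)$, use convexity to get $\btau\in H^1$, set the Fortin operator to $\Pi_h\btau$, verify \eqref{eq:fortin}, and invoke Fortin's criterion. You even flag the one point the paper leaves implicit, namely that the uniform-in-$h$ boundedness of $\Pi_h\colon H^1\to H(\div)$ is not literally contained in Assumption~\ref{assu:discret}\ref{it:interp} but must be supplied from the concrete interpolant; the paper's sketch simply asserts that the composite map is ``linear and continuous'' without spelling this out either.
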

 
Based on Assumption~\ref{assu:discret}\ref{it:conform}--\ref{it:divh}, 
the standard theory for mixed finite elements yields the following lemma.
For the corresponding proof, we refer to \cite[Section~4]{braess}.

\begin{lemma}\label{lem:saddleh}
    Let Assumption~\ref{assu:discret} be fulfilled. Then, the following is valid:
    \begin{enumerate}[label=\textup{(\roman*)}]
        \item\label{it:divdivh}
        For all $\bq_h, \btau_h \in Q_h$ satisfying $\div_h \bq_h = \div_h \btau_h$, 
        it holds that $\div \bq_h = \div \btau_h$.
        \item\label{it:saddlehexist}
        For every $F \in H(\div)^*$ and $f \in L^2(\Omega)$ 
        there exists a unique solution $(\btau_h, \lambda_h) \in Q_h \times V_h$ to the saddle point problem
        \begin{subequations} \label{eq:discretesaddle}
        \begin{alignat}{3}
            & \int_\Omega \btau_h \cdot \bw_h \, dx + \int_\Omega \lambda_h \div \bw_h \, dx 
            & \;	= \; &  \langle F , \bw_h \rangle \quad && \forall \, \bw_h \in Q_h \label{eq:discretesaddle_a} \\
            & - \int_\Omega v_h \div \btau_h \, dx & = \;  &  \int_\Omega f \, v_h \, dx 
            \quad && \forall v_h \in V_h. \label{eq:discretesaddle_b}
        \end{alignat}
        \end{subequations}
        \item\label{it:bestapprox1}
        This solution satisfies the following best approximation result
        \begin{equation}
            \| \btau - \btau_h \|_X 
            \leq 2 \inf \{ \| \btau - \bw_h \|_X : \bw_h \in Q_h, - \div_h \bw_h = f \}, \label{eq:bestapprox}
        \end{equation}
        where $\btau$ is the solution of \eqref{eq:continuoussaddle} and $X = H(\div)$ or $X = L^2(\Omega; \R^d)$.
        \item\label{it:bestapprox2}
        There exists a constant $C>0$ 
        depending only on the LBB-constant such that the solution of \eqref{eq:discretesaddle} satisfies.
        \begin{equation}\label{eq:aprioriboundh}
            \| \btau_h \|_{H(\div)} + \| \lambda_h \|_{L^2(\Omega)} \leq C (\| F \|_{H(\div)^*} + \| f \|_{L^2(\Omega)}).
        \end{equation}
        Moreover, the following best approximation result holds true
        \begin{equation}
            \| \btau - \btau_h \|_{H(\div)} \leq 2(1+C )\inf_{\bw_h \in Q_h} \| \btau - \bw_h \|_{H(\div)},
        \end{equation}
        where $\btau$ again denotes the solution of \eqref{eq:continuoussaddle}.
    \end{enumerate}     
\end{lemma}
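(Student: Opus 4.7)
The plan is to read off all four items from Brezzi's theorem for mixed finite elements, the LBB-condition from Corollary~\ref{cor:fortin}, and the defining property $V_h=\div(Q_h)$ from Assumption~\ref{assu:discret}\ref{it:divh}. I would handle the items in the order (i), then (ii)/(iv), then (iii), since (i) feeds the $H(\div)$-best-approximation argument in (iii).

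For (i), the key observation is that $V_h=\div(Q_h)$ allows us to pick the divergence of a discrete function as a test function. Given $\bq_h,\btau_h\in Q_h$ with $\div_h\bq_h=\div_h\btau_h$, the function $\div(\bq_h-\btau_h)$ lies in $V_h$. Inserting it as $v_h$ in the definition of $\div_h$ yields $\|\div(\bq_h-\btau_h)\|_{L^2}^2=\langle \div_h(\bq_h-\btau_h),\div(\bq_h-\btau_h)\rangle=0$, hence $\div \bq_h=\div\btau_h$ in $L^2(\Omega)$.

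For (ii) and (iv), I would appeal to Brezzi's theorem (e.g.\ \cite[Thm.~4.3]{braess}). Two hypotheses have to be checked. The LBB-condition on $(Q_h,V_h)$ is exactly Corollary~\ref{cor:fortin}. Coercivity of the bilinear form $(\btau,\bw)\mapsto \int_\Omega\btau\cdot\bw\,dx$ on the discrete kernel $Z_h:=\{\bv_h\in Q_h:\div_h\bv_h=0\}$ follows from (i): if $\div_h\bv_h=0$ then $\div\bv_h=0$ (apply (i) with $\btau_h=0$), so $\|\bv_h\|_{H(\div)}^2=\|\bv_h\|_{L^2}^2$, which gives coercivity with constant $1$ w.r.t.\ the $H(\div)$-norm uniformly in $h$. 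Brezzi's theorem then immediately delivers the unique solvability asserted in (ii), the stability bound \eqref{eq:aprioriboundh}, and the $H(\div)$-best-approximation estimate in (iv) (with a constant that depends only on the LBB-constant).

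The interesting part is (iii), the quasi-best approximation on the affine set of discretely divergence-compatible functions. Here I would proceed by the classical kernel argument. For any $\bw_h\in Q_h$ with $-\div_h\bw_h=f$, we have $\div_h(\btau_h-\bw_h)=0$, so $\btau_h-\bw_h\in Z_h$, and by (i) also $\div(\btau_h-\bw_h)=0$ in $L^2(\Omega)$. Subtracting the continuous and discrete first equations \eqref{eq:continuoussaddle_a} and \eqref{eq:discretesaddle_a} for test functions $\bv_h\in Z_h$ yields Galerkin orthogonality $\int_\Omega (\btau-\btau_h)\cdot\bv_h\,dx=0$, since $\div\bv_h=0$ eliminates the multiplier terms. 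Choosing $\bv_h=\bw_h-\btau_h\in Z_h$ and using Cauchy--Schwarz gives $\|\btau_h-\bw_h\|_{L^2}\le\|\btau-\bw_h\|_{L^2}$, whence by the triangle inequality $\|\btau-\btau_h\|_{L^2}\le 2\|\btau-\bw_h\|_{L^2}$, which is \eqref{eq:bestapprox} for $X=L^2(\Omega;\R^d)$. For $X=H(\div)$, item (i) gives $\div\btau_h=\div\bw_h$, so $\div(\btau-\btau_h)=\div(\btau-\bw_h)$, and adding the $L^2$-estimate for the divergence-free part yields $\|\btau-\btau_h\|_{H(\div)}\le 2\|\btau-\bw_h\|_{H(\div)}$. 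The main obstacle here is essentially bookkeeping, namely making sure that the orthogonality derived from \eqref{eq:discretesaddle_a} really extends to the continuous solution on the whole discrete kernel $Z_h$; this is precisely what item (i) guarantees, which is why (iii) logically sits after (i).
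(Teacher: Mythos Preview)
Your proposal is correct and follows precisely the standard route the paper has in mind: the paper does not give an explicit proof but simply refers to \cite[Section~4]{braess}, i.e., Brezzi's theory for mixed finite elements together with the property $V_h=\div(Q_h)$, which is exactly what you unpack. One small remark in line with the paper's own Remark~\ref{rem:woLBB}: your argument for (ii) invokes the uniform LBB-constant via Brezzi's theorem, whereas mere existence and uniqueness in (ii) already follow from the surjectivity of $\div:Q_h\to V_h$ (giving an $h$-dependent inf-sup constant in finite dimensions); this is not a gap, just a slight over-use of hypotheses for that item.
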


\begin{remark}\label{rem:woLBB}
    Note that the assertions of Lemma~\ref{lem:saddleh}\ref{it:divdivh}--\ref{it:bestapprox1} 
    also hold without the LBB-condition, i.e., without the existence of the interpolation operator 
    from Assumption~\ref{assu:discret}\ref{it:interp}.
\end{remark}

\begin{lemma}\label{lem:distEE}
    Let Assumption~\ref{assu:discret} hold and let a sequence $\{(\bq_h, \nabla u_h)\}_{h>0}$ 
    with $(\bq_h, \nabla u_h) \in \EE_h$ be given. Then, there exists  a sequence 
    $\{(\hat \bq_h, \nabla \hat u_h)\}_{h>0} \subset \EE$ such that
	\begin{equation}\label{eq:strongconv}
        	\|(\hat \bq_h - \bq_h, \nabla (\hat u_h - \nabla u_h))\|_{L^2(\Omega;\R^d)^2} \to 0 
        	\quad \text{as } h \searrow 0.
	\end{equation}
\end{lemma}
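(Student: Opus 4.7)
The plan is to decouple the two components of the pair. Since $u_h \in U_h \subset H^1_0(\Omega)$ already by Assumption~\ref{assu:discret}\ref{it:conform}, I would simply choose $\hat u_h := u_h$, so that $\nabla \hat u_h - \nabla u_h \equiv 0$ vanishes exactly. All the work then reduces to constructing $\hat \bq_h \in H(\div)$ satisfying $-\div \hat\bq_h = f$ in $L^2(\Omega)$ and such that $\hat\bq_h - \bq_h \to 0$ in $L^2(\Omega;\R^d)$.

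The key idea is to introduce a single, $h$-\emph{independent} reference flux $\bar\btau$ and correct $\bq_h$ by its Fortin interpolation error. Since $\Omega$ is convex and hence $H^2$-regular under Assumption~\ref{assu:domreg}, Lemma~\ref{lem:continuoussaddle} applied with $F = 0 \in H^1(\Omega;\R^d)$ and the given right-hand side $f \in L^2(\Omega)$ produces $\bar\btau \in H^1(\Omega;\R^d)$ with $-\div\bar\btau = f$. I would then define
\begin{equation*}
    \hat\bq_h := \bq_h + \bar\btau - \Pi_h \bar\btau,
\end{equation*}
which lies in $H(\div)$ as each summand does. The commuting-diagram property \eqref{eq:RTinterpol1} gives $\div_h \Pi_h \bar\btau = \div_h \bar\btau$, and since
\begin{equation*}
    \int_\Omega v_h \div\bar\btau \, dx = -\int_\Omega v_h f \, dx = \int_\Omega v_h \div\bq_h \, dx \quad \forall\, v_h \in V_h,
\end{equation*}
this reads $\div_h \Pi_h\bar\btau = \div_h \bq_h$. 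Lemma~\ref{lem:saddleh}\ref{it:divdivh} then upgrades this identity from $V_h^*$ to $L^2(\Omega)$, yielding $\div\Pi_h\bar\btau = \div\bq_h$, whence
\begin{equation*}
    \div\hat\bq_h = \div\bq_h + \div\bar\btau - \div\Pi_h\bar\btau = \div\bar\btau = -f.
\end{equation*}
Hence $(\hat\bq_h, \nabla\hat u_h) \in \EE$, and the claimed convergence \eqref{eq:strongconv} follows immediately from \eqref{eq:RTinterpol2}, since $\|\hat\bq_h - \bq_h\|_{L^2(\Omega;\R^d)} = \|\bar\btau - \Pi_h\bar\btau\|_{L^2(\Omega;\R^d)} \to 0$.

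The only slightly non-obvious point is the choice to use a \emph{fixed} $\bar\btau$ rather than, e.g., solving a $\bq_h$-dependent continuous saddle point with data $F = \bq_h$. The latter route leads to a $\bq_h$-dependent corrector $\nabla\lambda$ whose smallness hinges on $P_{V_h}f \to f$ (density of $V_h$ in $L^2(\Omega)$), which is not guaranteed by Assumption~\ref{assu:discret}. Using a fixed $\bar\btau$ sidesteps this entirely: the perturbation $\bar\btau - \Pi_h\bar\btau$ is just the Fortin interpolation error of an $h$-independent $H^1$-function, and by construction it has exactly the divergence needed to cancel the mismatch between $\div\bq_h$ and $-f$, so that \eqref{eq:RTinterpol2} delivers convergence for free.
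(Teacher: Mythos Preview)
Your proof is correct and shares the same skeleton as the paper's: set $\hat u_h := u_h$, invoke Lemma~\ref{lem:continuoussaddle} with $(F,f)=(0,f)$ to obtain an $H^1$-regular reference flux (the paper calls it $\btau$, you call it $\bar\btau$), and add to $\bq_h$ a correction that is divergence-free thanks to Lemma~\ref{lem:saddleh}\ref{it:divdivh}. The difference lies in the choice of discrete comparison element. The paper additionally solves the \emph{discrete} saddle-point problem \eqref{eq:discretesaddle} with the same data to obtain $\btau_h$, sets $\hat\bq_h := \bq_h + \btau - \btau_h$, and then bounds $\|\btau - \btau_h\|_{L^2}$ via the best-approximation estimate of Lemma~\ref{lem:saddleh}\ref{it:bestapprox1}, which in turn is controlled by $\|\btau - \Pi_h\btau\|_{L^2}$. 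You bypass both the discrete solve and the best-approximation step by taking $\Pi_h\bar\btau$ directly and setting $\hat\bq_h := \bq_h + \bar\btau - \Pi_h\bar\btau$, so that the correction \emph{is} the interpolation error and \eqref{eq:RTinterpol2} applies immediately. Your route is shorter and uses strictly less machinery; the paper's route is not wrong, merely slightly less direct, and the two coincide once one notices that $\Pi_h\btau$ is itself an admissible competitor in the best-approximation infimum.
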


\begin{proof}
	We consider the system \eqref{eq:continuoussaddle}  with $(F, f) = (0, f)$ 
	and denote the corresponding solution by $(\btau, \lambda) \in H(\div) \times L^2(\Omega)$.
    Moreover, we solve \eqref{eq:discretesaddle} with the same right hand side and denote this solution 
    by $(\btau_h, \lambda_h) \in Q_h \times V_h$. Furthermore, we set
	\begin{equation*}
		\hat\bq_h \coloneqq \bq_h + \btau- \btau_h \in H(\div).
	\end{equation*}
	Due to Lemma~\ref{lem:saddleh}\ref{it:divdivh}, there holds $\div (\bq_h - \btau_h) = 0$, 
	and, consequently, $\div \hat\bq_h = \div \btau = -f$ and therefore, $(\hat\bq_h, \nabla u_h) \in \EE$ 
	by the conformity of $U_h$ by Assumption~\ref{assu:discret}\ref{it:conform}. 
		
    Now, since $\Omega$ is convex and $f \in L^2(\Omega)$, Lemma~\ref{lem:continuoussaddle} implies 
    $\btau \in H^1(\Omega; \R^d)$ with $\| \btau \|_{H^1(\Omega;\R^d)} \leq c \| f \|_{L^2(\Omega)}$. 
    Thus, Lemma~\ref{lem:saddleh}\ref{it:bestapprox1} gives
	\begin{align*}
		\| \hat\bq_h - \bq_h \|_{L^2(\Omega; \R^d)} = \| \btau - \btau_h \|_{L^2(\Omega ; \R^d)} 
	    \leq 2 \| \btau - \Pi_h \btau \|_{L^2(\Omega; \R^d)} \to 0 
	    \quad \text{as } h\searrow 0.
	\end{align*}
	Therefore, if we set $\hat u_h := u_h$, we obtain $(\hat\bq_h, \nabla \hat u_h) \in \EE$ and 
	\eqref{eq:strongconv}.
\end{proof}

\begin{lemma} \label{lem:distEE_h}
    Let Assumption~\ref{assu:discret} be fulfilled and let $(\bq, \nabla u) \in \EE$ be given. 
    Then there is a sequence $\{ (\bq_h, \nabla u_h) \}_{h >0} \subset H(\div) \times L^2(\Omega; \R^d)$ 
    such that ${(\bq_h, \nabla u_h) \in \EE_h}$ and
    \begin{equation*}
        (\bq_h, \nabla u_h) \to (\bq, \nabla u) \quad \text{in } L^2(\Omega;\R^d) \times L^2(\Omega; \R^d)
        \quad \text{as } h \searrow 0.
    \end{equation*}
\end{lemma}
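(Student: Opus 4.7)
My plan is to construct $u_h$ and $\bq_h$ separately. The scalar $u_h$ is handled immediately by Assumption~\ref{assu:discret}\ref{it:conform}: the density of $\bigcup_h U_h$ in $H^1_0(\Omega)$ provides $u_h \in U_h$ with $u_h \to u$ in $H^1_0(\Omega)$, hence $\nabla u_h \to \nabla u$ in $L^2(\Omega;\R^d)$. For the flux, I would define $\bq_h$ as the first component of the discrete saddle-point solution $(\bq_h, \lambda_h) \in Q_h \times V_h$ of~\eqref{eq:discretesaddle} with the right-hand side functional $\langle F, \bw\rangle := \int_\Omega \bq \cdot \bw \, dx$ (which lies in $H(\div)^*$, since $\bq \in L^2(\Omega;\R^d)$) and with the given inhomogeneity $f$. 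Existence and uniqueness are guaranteed by Lemma~\ref{lem:saddleh}\ref{it:saddlehexist}, and $-\div_h \bq_h = f$ by construction, so $(\bq_h,\nabla u_h) \in \EE_h$. Using only the relation $-\div \bq = f$ coming from $(\bq, \nabla u) \in \EE$, a one-line check confirms that $(\bq, 0) \in H(\div) \times L^2(\Omega)$ solves the corresponding continuous problem~\eqref{eq:continuoussaddle} with the same data. Applying Lemma~\ref{lem:saddleh}\ref{it:bestapprox1} with $X = L^2(\Omega;\R^d)$ therefore reduces the entire proof to showing that
\begin{equation*}
    \inf\bigl\{ \|\bq - \bw_h\|_{L^2(\Omega;\R^d)} : \bw_h \in Q_h,\, -\div_h \bw_h = f \bigr\} \longrightarrow 0
    \quad \text{as } h \searrow 0.
\end{equation*}

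This last step is the main obstacle, because $\bq$ lies only in $H(\div)$, whereas the Fortin interpolant $\Pi_h$ from Assumption~\ref{assu:discret}\ref{it:interp} requires $H^1$-regularity. My plan is to proceed in two substeps. First, for fixed $\epsilon > 0$, I would invoke Lemma~\ref{lem:Temam} to select $\tilde\bq \in C^\infty(\overline\Omega;\R^d)$ with $\|\bq - \tilde\bq\|_{H(\div)} < \epsilon$, and then apply $\Pi_h$ to this smooth approximant. By~\eqref{eq:RTinterpol2} we have $\Pi_h \tilde\bq \to \tilde\bq$ in $L^2$, and by~\eqref{eq:RTinterpol1} the discrete divergence $\div_h \Pi_h \tilde\bq = \div_h \tilde\bq$ differs from $-f = -\div_h \bq$, when viewed as functionals on $V_h$, only by $v_h \mapsto \int_\Omega v_h \div(\bq - \tilde\bq)\,dx$, whose $V_h^*$-norm is bounded by $\|\bq - \tilde\bq\|_{H(\div)} < \epsilon$. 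Second, I would close this defect by invoking the LBB-condition from Corollary~\ref{cor:fortin}: it yields a correction $\bz_h \in Q_h$ with $\|\bz_h\|_{H(\div)} \leq \epsilon/\beta$ such that $\bw_h := \Pi_h \tilde\bq + \bz_h \in Q_h$ satisfies $-\div_h \bw_h = f$ exactly. The triangle inequality then gives
\begin{equation*}
    \|\bq - \bw_h\|_{L^2(\Omega;\R^d)} \leq (1 + 1/\beta)\,\epsilon + \|\tilde\bq - \Pi_h \tilde\bq\|_{L^2(\Omega;\R^d)},
\end{equation*}
and letting first $h \searrow 0$ at fixed $\tilde\bq$ and then $\epsilon \searrow 0$ drives the infimum to zero, completing the argument.
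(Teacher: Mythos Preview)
Your proof is correct and rests on the same three ingredients as the paper's: density of $C^\infty(\overline\Omega;\R^d)$ in $H(\div)$ (Lemma~\ref{lem:Temam}), the Fortin interpolant $\Pi_h$ applied to the smooth approximant, and a correction of the resulting divergence defect. The packaging differs slightly. The paper constructs the approximant directly as $\bq_h^\varepsilon := \Pi_h \bq_\varepsilon + \btau_h^\varepsilon$, where $\btau_h^\varepsilon$ is obtained by solving the discrete saddle point problem \eqref{eq:discretesaddle} with data $(0, f - f_\varepsilon)$ and bounding it via the a~priori estimate in Lemma~\ref{lem:saddleh}\ref{it:bestapprox2}; you instead fix $\bq_h$ once and for all as the $L^2$-projection of $\bq$ onto the discrete constraint set (i.e., the saddle point solution with $F=\bq$), reduce to a best-approximation problem via Lemma~\ref{lem:saddleh}\ref{it:bestapprox1}, and close the defect by invoking the LBB condition from Corollary~\ref{cor:fortin} directly. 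These two correction mechanisms are equivalent (the a~priori bound \eqref{eq:aprioriboundh} is itself a consequence of LBB), so the arguments are essentially interchangeable. Your route has the minor aesthetic advantage that the sequence $\{\bq_h\}$ is defined intrinsically, independent of the auxiliary parameter $\varepsilon$, whereas the paper's $\bq_h^\varepsilon$ depends on $\varepsilon$ and a diagonal extraction is implicit in the final line of their proof.
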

	
\begin{proof}
    Let $\varepsilon >0$ be arbitrary. By Lemma~\ref{lem:Temam}, there is a function 
    $\bq_\varepsilon \in C^\infty(\overline{\Omega}; \R^d)$ such that
    \begin{equation}
        \| \bq - \bq_\varepsilon \|_{H(\div)} \leq \min \{ 1, C^{-1} \} \frac{\varepsilon}{3}, 
        \label{eq:estimate_sigma_sigma_eps}
    \end{equation}
    where $C>0$ is the constant from Lemma~\ref{lem:saddleh}\ref{it:bestapprox2}. 
    Since $\bq_\varepsilon$ is smooth, we are allowed to apply $\Pi_h$, which yields
    \begin{equation*}
	    \| \bq_\varepsilon - \Pi_h \bq_\varepsilon \|_{L^2(\Omega; \R^d)} 
		\leq \frac{\varepsilon}{3}
	\end{equation*}
    provided that $h >0$ is chosen sufficiently small. Define now $f_\varepsilon \coloneqq - \div \bq_\varepsilon$ 
    and denote the solution of \eqref{eq:discretesaddle} with right hand side 
    $(0, f - f_\varepsilon)$ by $(\btau_h^\varepsilon, \lambda_h^\varepsilon)$. Then we set
    \begin{equation*}
        \bq_h^\varepsilon \coloneqq \Pi_h \bq_\varepsilon + \btau_h^\varepsilon \in Q_h.
    \end{equation*}
		Then, \eqref{eq:RTinterpol1} implies for every $v_h \in V_h$ that
		\begin{align*}
			\int_\Omega \div \bq_h^\varepsilon \, v_h \, dx 
			& = \int_\Omega \div(\Pi_h \bq_\varepsilon) \, v_h \, dx + \int_\Omega \div \btau_h^\varepsilon \, v_h \, dx \\
			& = \int_\Omega \div \bq_\varepsilon \, v_h \, dx - \int_\Omega (f - f_\varepsilon) \, v_h \, dx 
			= - \int_\Omega f \, v_h \, dx,
		\end{align*}
		i.\,e., $- \div_h \bq_h^\varepsilon = f$. 
		According to Lemma~\ref{lem:saddleh}\ref{it:bestapprox2}, 
		we deduce from \eqref{eq:estimate_sigma_sigma_eps} that
		\begin{equation*}
			\| \btau_h^\varepsilon \|_{L^2(\Omega; \R^d)} 
			\leq C \| f - f_\varepsilon \|_{L^2(\Omega)} 
			= C \| \div \bq - \div \bq_\varepsilon \|_{L^2(\Omega)} \leq \frac{\varepsilon}{3}.
		\end{equation*}
		Altogether, we obtain
		\begin{equation*}
			\| \bq - \bq_h^\varepsilon \|_{L^2(\Omega; \R^d)} 
			\leq \| \bq - \bq_\varepsilon \|_{L^2(\Omega; \R^d)} 
			+ \| \bq_\varepsilon - \Pi_h \bq_\varepsilon \|_{L^2(\Omega; \R^d)} 
			+ \| \btau_h^\varepsilon \|_{L^2(\Omega; \R^d)} \leq \varepsilon.
		\end{equation*}
		Finally, since $\bigcup_{h > 0} U_h$ is dense in $H^1_0(\Omega)$ by assumption, there exist $h>0$ 
		and $u_h \in U_h$ such that $\| \nabla u - \nabla u_h \|_{L^2(\Omega; \R^d)} \leq \varepsilon$. 
        As $\varepsilon >0$ was arbitrary, this proves the claim.
\end{proof}

\begin{proposition}\label{prop:RT}
    Let $\{\TT_h\}_{h>0}$ be a family of shape regular triangulations of $\Omega$
    according to \cite[Definition~5.1]{braess}.
    Then the Raviart-Thomas space of order $k\in \N \cup \{0\}$ given by
    \begin{equation*}
        \RR\TT_k(\TT_h) \coloneqq
        \{ \bw \in H(\div) \colon \bw |_T \in \RR T_k(T) \; \forall\, T \in \TT_h\}
    \end{equation*}        
    with $\RR\TT_k(T) \coloneqq \PP_k(T)^d + x \, \PP_k(T)$, where $\PP_k(T)$ denotes the space of polynomials 
    of order $k$ on $T$, is a feasible choice for $Q_h$ fulfilling Assumption~\ref{assu:discret}. 
    
    For $U_h$ one can choose the classical finite element space 
    \begin{equation*}
        U_h \coloneqq \{ u \in C(\overline{\Omega}) \cap H^1_0(\Omega) \colon u|_T \in \PP_k(T) \; \forall \, T \in \TT_h \} 
    \end{equation*}        
    in order to fulfill Assumption~\ref{assu:discret}.
\end{proposition}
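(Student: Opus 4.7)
The plan is to verify the three items of Assumption~\ref{assu:discret} in turn by invoking the standard theory of Raviart-Thomas elements, as presented e.g.\ in \cite{braess}. Nothing genuinely new has to be proved; the task is to match the properties listed in Section~\ref{sec:RT} with well-known facts about $\RR\TT_k$ and continuous Lagrange elements on shape-regular triangulations.

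For the conformity condition~\ref{it:conform}, I would first observe that $U_h$ consists of continuous piecewise polynomials with zero trace on $\partial\Omega$, hence $U_h \subset H^1_0(\Omega)$, and the density of $\bigcup_{h>0} U_h$ in $H^1_0(\Omega)$ is the classical density statement for Lagrange finite elements on a family of shape-regular triangulations. For $Q_h = \RR\TT_k(\TT_h)$, the relevant fact is that a piecewise polynomial field belongs to $H(\div)$ if and only if its normal component is single-valued across interelement faces, and the definition of $\RR\TT_k$ is precisely tailored so that this holds. This gives $Q_h \subset H(\div)$. Item~\ref{it:divh} is then merely a definition once $V_h := \div(Q_h) \subset L^2(\Omega)$ is known; for RT one may moreover use the well-known identification of $V_h$ with the space of $\TT_h$-piecewise polynomials of degree at most $k$.

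The main work lies in item~\ref{it:interp}. I would take $\Pi_h$ to be the canonical Raviart-Thomas interpolation operator associated with the standard degrees of freedom (moments of the normal trace against $\PP_k$ on each face, together with interior moments against $\PP_{k-1}(T)^d$ for $k \geq 1$). A direct check using the trace theorem shows that these functionals are continuous on $H^1(\Omega;\R^d)$, so $\Pi_h$ is well-defined there. To obtain \eqref{eq:RTinterpol1}, I would invoke the commuting diagram property $\div \Pi_h \btau = P_{V_h} \div \btau$, where $P_{V_h}$ denotes the $L^2(\Omega)$-projection onto $V_h$; testing against any $v_h \in V_h$ immediately yields
\begin{equation*}
\langle \div_h \Pi_h \btau, v_h \rangle
= \int_\Omega v_h \, P_{V_h} \div \btau \, dx
= \int_\Omega v_h \div \btau \, dx
= \langle \div_h \btau, v_h \rangle.
\end{equation*}

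For the $L^2$-convergence \eqref{eq:RTinterpol2}, I would rely on the standard RT interpolation error estimate $\| \btau - \Pi_h \btau \|_{L^2(\Omega;\R^d)} \leq C\, h\, \| \btau \|_{H^1(\Omega;\R^d)}$ on shape-regular meshes. The only real obstacle I foresee is the stability/continuity of $\Pi_h$ on $H^1$ (rather than on more regular spaces), since the face-moment degrees of freedom are not defined on $L^2$ vector fields; however this is exactly what the trace theorem on shape-regular elements provides, and the combination with the interpolation estimate above yields the required convergence. Finally, the conformity of $U_h$ in $H^1_0(\Omega)$ and its density have already been addressed, so all parts of Assumption~\ref{assu:discret} are met, which concludes the proof.
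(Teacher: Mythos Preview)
Your proposal is correct and follows essentially the same approach as the paper: both verify Assumption~\ref{assu:discret} by invoking the standard theory of Raviart-Thomas elements (conformity by definition, density of Lagrange elements via smooth approximation and interpolation estimates, and the canonical RT interpolation operator with its commuting-diagram property and $L^2$-error bound). The paper's proof is slightly more terse and cites \cite{duran} for the identification $V_h = \PP_k(\TT_h)$ and for the properties of $\Pi_h$, whereas you spell out the commuting-diagram argument for \eqref{eq:RTinterpol1} explicitly, but the substance is the same.
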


\begin{proof}
    The conformity of $\RR\TT_k(\TT_h)$ and $U_h$ is already part of their definition. 
    The density of $\bigcup_{h>0} U_h$ in $H^1_0(\Omega)$ follows by smooth approximation and 
    standard interpolation error estimates. In case of Raviart-Thomas finite elements, the space $V_h = \div(Q_h)$ equals 
    $\PP_k(\TT_h) \coloneqq \{v \in L^2(\Omega) \colon v|_T \in  \PP_k(T) \; \forall \, T \in \TT_h \}$, 
    see e.g.\ \cite[Lemma~3.5]{duran}. 
    The existence of an interpolation operator $\Pi_h$ fulfilling Assumption~\ref{assu:discret}\ref{it:interp} 
    is established in \cite[Theorem~3.1, Lemma~3.5]{duran}.
\end{proof}

\begin{remark}
    There are several other elements satisfying Assumption~\ref{assu:discret}, for instance the 
    $\BB\DD\MM$-element or the Raviart-Thomas element on quadrilateral meshes.
    We refer to \cite{duran} and the references therein.
\end{remark}

\begin{remark}\label{rem:domreg}
    The regularity assumptions on $\Omega$ in Assumption~\ref{assu:domreg} can be relaxed. 
    In fact, it is sufficient to require that $\Omega$ is polygonally resp.\ polyhedrally bounded, i.e., 
    we can drop the convexity of $\Omega$. This is due to the fact that convexity is only needed for the 
    regularity of the solution of the saddle point problem \eqref{eq:continuoussaddle} 
    for the construction of Fortin's interpolation operator for Corollary~\ref{cor:fortin} and for the solution 
    $(\btau, \lambda)$ in the proof of \ref{lem:distEE}. In both cases however, one can resort to a larger 
    convex domain $B$ containing $\Omega$ and solve the continuous saddle point problem there
    such that the regularity result from Lemma~\ref{lem:continuoussaddle} applies.
    The function $\btau_h$ in the proof of Lemma~\ref{lem:distEE} is then defined on $B$ 
    and for this reason, one needs to assume that the meshes can be extended in a shape regular way from 
    $\Omega$ to $B$ so that the results of Lemma~\ref{lem:saddleh} also hold on $B$ instead of $\Omega$.
    In order to avoid these technical issues, we restrict ourselves to the case of a convex domain $\Omega$.
\end{remark}


\section{Data Topology}\label{sec:data}

Let us recall the concept of data convergence, which was first introduced in \cite{CMO2018}.
It represents an intermediate convergence between weak and strong convergence and is especially tailored 
to the structure of the data driven problem \eqref{DDP}.

	\begin{definition}[Data convergence]
		Let $Z$ be a reflexive, separable Banach space. A sequence $\{(y_k,z_k)\}_{k \in \N}$ in $Z \times Z$ is said to converge to $(y,z) \in Z \times Z$ in the data topology, denoted $(y,z) = \deltalim_{k \to \infty} (y_k,z_k)$, if
		\begin{align*}
			y_k \rightharpoonup y, \quad z_k \rightharpoonup z \quad \text{and} \quad  y_k-z_k \to y-z 
			\quad \text{in } Z.
		\end{align*}
	\end{definition}
	
	The concept of data convergence can be transferred to sets.

	\begin{definition}[Data convergence of sets]
		Let $Z$ be a reflexive, separable Banach space and $\DD, \DD_k, \EE, \EE_k \subset Z$, $k \in \N$. We write
		$\DD \times \EE = \deltalim_{k \to \infty} (\DD_k \times \EE_k)$, if
		\begin{enumerate}[label=(DC\arabic*)]
			\item\label{it:DC1}
			for each $(y,z) \in \DD \times \EE$ there is a sequence $\{(y_k,z_k)\}_{k \in \N}$ with $(y_k,z_k) \in \DD_k \times \EE_k$ for each $k \in \N$ such that $(y,z) = \deltalim_{k \to \infty} (y_k,z_k)$,
			\item\label{it:DC2}
			for each sequence $\{(y_{j},z_{j})\}_{j \in \N}$ with $(y_{j},z_{j}) \in \DD_{k_j} \times \EE_{k_j}$ for each ${j} \in \N$, $\{k_j\}_{j \in \N}$ strictly monotonically increasing, and $(y,z) = \deltalim_{j \to \infty} (y_{j},z_{j})$ it holds that $(y,z) \in \DD \times \EE$.
		\end{enumerate}
	\end{definition}

	Note that the above definition of data convergence of sets corresponds to Kuratowski convergence of sets with respect to data convergence.
The notion of data convergence of sets is especially well suited to the approximation of data-driven problems 
of the form \eqref{DDP}, as the following proposition shows. 
Its proof is along the lines of \cite[Theorem~3.2]{CMO2018}, where the equilibrium set $\EE$ is fixed.
Here we  additionally consider the approximation of $\EE$ is with a sequence of sets $\EE_k$. 
Though the proof is a straightforward
adaptation of the one in \cite{CMO2018}, we present it for convenience of the reader.

\begin{proposition} \label{prop:solution_convergence}
    Let $Z$ be a reflexive and separable Banach space and suppose that subsets $\DD, \EE\subset Z$
    and sequences of subsets $\{\DD_k\}_{k\in \N}$, $\{\EE_k\}_{k\in\N}$,     
    $\DD_k, \EE_k \subset Z$ for all $k\in \N$, are given such that 
    \begin{equation}\label{eq:dataconv}
        \DD \times \EE = \deltalim_{j \to \infty} (\DD_{k} \times \EE_{k}).
    \end{equation}
    Assume moreover that there are constants $c >0$ and $b \geq 0$, independent of $k\in \N$, such that, 
    for all $k\in \N$,
    \begin{equation}\label{eq:equitrans}
        \| y-z \|_Z \geq c \big( \|y\|_Z + \|z \|_Z \big) - b   \quad \forall\, (y, z) \in \DD_k \times \EE_k.
    \end{equation}        
    Furthermore, define $F_k : Z\times Z \to [0,\infty]$ by
    \begin{equation*}
        F_k(y,z) \coloneqq I_{\DD_k}(y) + I_{\EE_k}(z) + \| y - z \|_Z^2,
    \end{equation*}
    where $I_{\DD_k} : Z \to \{0, \infty\}$ is the indicator functional of $\DD_k$, i.e., 
    \begin{equation*}
        I_{\DD_k}(y) \coloneqq 
        \begin{cases}
            0 , & y \in \DD_k, \\
            \infty ,& y \notin \DD_k
        \end{cases}
    \end{equation*}
    and $ I_{\EE_k}$ is defined analogously.
    Then, the following is valid:
    \begin{enumerate}[label=\textup{(\alph*)}]
        \item\label{it:liminf}
        If $F_k(y_k,z_k) \to 0$, there exists $z \in \DD\cap \EE$ such that, 
        up to subsequences, $(z,z) = \deltalim_{k \to \infty}(y_k,z_k)$;
        \item\label{it:limsup}
        If $z \in \DD \cap \EE$, there exists a sequence $\{(y_k,z_k)\}_{k \in \N}$ in $Z \times Z$ 
        such that $(z,z) = \deltalim_{k \to \infty}(y_k,z_k)$ and $F_k(y_k,z_k) \to 0$.
    \end{enumerate}
\end{proposition}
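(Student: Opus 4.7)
The plan is to prove the two parts separately, with part (b) being essentially a direct unpacking of the definitions and part (a) requiring a short compactness argument built on the coercivity hypothesis \eqref{eq:equitrans}.

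For part \ref{it:limsup}, I would start from $z \in \DD \cap \EE$ and view $(z,z) \in \DD \times \EE$. Applying the approximation axiom \ref{it:DC1} from the data convergence of sets assumption \eqref{eq:dataconv}, I obtain a sequence $(y_k, z_k) \in \DD_k \times \EE_k$ with $(z,z) = \deltalim_{k \to \infty}(y_k, z_k)$. By definition of data convergence this yields $y_k - z_k \to z - z = 0$ strongly in $Z$, so $\|y_k - z_k\|_Z^2 \to 0$, and since $y_k \in \DD_k$ and $z_k \in \EE_k$ the indicator functionals vanish. Hence $F_k(y_k, z_k) \to 0$, which is all that is required.

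For part \ref{it:liminf}, the hypothesis $F_k(y_k, z_k) \to 0$ immediately gives $y_k \in \DD_k$, $z_k \in \EE_k$, and $\|y_k - z_k\|_Z \to 0$. The first key step is to extract boundedness: plugging $(y_k, z_k)$ into the coercivity inequality \eqref{eq:equitrans} yields
\begin{equation*}
    \|y_k\|_Z + \|z_k\|_Z \leq \tfrac{1}{c}\big(\|y_k - z_k\|_Z + b\big),
\end{equation*}
and the right-hand side is bounded uniformly in $k$. Since $Z$ is reflexive, a diagonal argument extracts a subsequence (not relabelled) along which $y_k \rightharpoonup y$ and $z_k \rightharpoonup z$ in $Z$. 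Subtracting, $y_k - z_k \rightharpoonup y - z$, but we already have $y_k - z_k \to 0$ strongly, so by uniqueness of weak limits $y = z$. Call this common limit $z$. Then by the very definition of the data topology, $(z, z) = \deltalim_{k \to \infty}(y_k, z_k)$, where I have used the strong convergence $y_k - z_k \to 0 = z - z$ on top of the two weak convergences.

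The final step invokes the closure axiom \ref{it:DC2}: the pairs $(y_k, z_k) \in \DD_k \times \EE_k$ data-converge to $(z, z)$, therefore $(z, z) \in \DD \times \EE$, i.e., $z \in \DD$ and $z \in \EE$, whence $z \in \DD \cap \EE$ as required. I expect no real obstacle; the only point that warrants care is making sure the coercivity estimate \eqref{eq:equitrans} is applied along the sequence (which is legitimate because $(y_k, z_k) \in \DD_k \times \EE_k$ for each $k$), and that the strong convergence of the differences is used both to upgrade the pair of weak limits into a single common limit and to verify the third clause of data convergence simultaneously.
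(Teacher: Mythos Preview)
Your proposal is correct and follows essentially the same route as the paper's proof: coercivity from \eqref{eq:equitrans} yields boundedness, reflexivity gives weakly convergent subsequences, strong convergence of the differences forces the two weak limits to coincide, and then \ref{it:DC2} (respectively \ref{it:DC1} for part~\ref{it:limsup}) finishes the job. The only cosmetic slip is that $F_k(y_k,z_k)\to 0$ guarantees $y_k\in\DD_k$ and $z_k\in\EE_k$ only for $k$ sufficiently large (early terms could be $+\infty$), but passing to a tail handles this immediately.
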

	
\begin{proof}
    ad \ref{it:liminf}: Let $F_k(y_k,z_k) \to 0$. Then, it follows that $y_k \in \DD_k, z_k \in \EE_k$ for $k$ sufficiently large 
    and $\|y_k-z_k \| \to 0$ as $k \to \infty$. By \eqref{eq:equitrans}, 
    $\{y_k\}_{k \in \N}$ and $\{z_k\}_{k \in \N}$ are bounded. 
    Therefore, there are subsequences $\{y_{k_j}\}_{j \in \N}$ and $\{z_{k_j}\}_{j \in \N}$ 
    and $y \in Z$ and $z \in Z$ such that $y_{k_j} \weakly y$ and $z_{k_j} \weakly z$. 
    By weak lower-semicontinuity of the norm, we have that
    \begin{equation*}
        0 \leq \| y-z \|_Z \leq \liminf_{j \to \infty} \| y_{k_j} - z_{k_j} \|_Z = 0.
    \end{equation*}
    Hence $y=z$ and $(z,z) = \deltalim_{j \to \infty}(y_{k_j},z_{k_j})$ and therefore, \eqref{eq:dataconv} yields
    $z\in \DD \cap \EE$ as claimed.
			
    ad \ref{it:limsup}: Let $z \in \DD \cap \EE$ be given. Then, thanks to \eqref{eq:dataconv}, 
    there exists a sequence $\{(y_k,z_k)\}_{k \in \N}$ with
    \begin{equation*}
        (y_k,z_k) \in \DD_k \times \EE_k
        \quad \text{and} \quad 
        (z,z) = \deltalim_{k \to \infty} (y_k,z_k).
    \end{equation*}
    This in particular implies $y_k-z_k \to z-z = 0$ and hence, by continuity of the norm,
    \begin{equation*}
        \lim_{k \to \infty} F_k(y_k,z_k) 
        = \lim_{k \to \infty} \left( I_{\DD_k}(y_k) + I_{\EE_k} (z_k) + \| y_k - z_k \|_Z^2 \right) =0,
    \end{equation*}
    as required.
\end{proof}

Proposition~\ref{prop:solution_convergence} shows that, if a sequence of sets $\{(\DD_k, \EE_k)\}_{k\in \N}$ 
satisfies \eqref{eq:equitrans} and more importantly \eqref{eq:dataconv}, then 
the data-driven problem with limit sets $\DD$ and $\EE$ admits a solution, which can be approximated
(w.r.t.\ data convergence) with solutions of the respective data-driven problems subject to the sets 
$\DD_k$ and $\EE_k$.
The crucial question is of course now, which (sequences of sets) satisfy \eqref{eq:dataconv}.
This will be answered for our conductivity example in the following section.


\section{Convergence Results}\label{sec:conv}

As in \cite[Theorem~3.3]{CMO2018}, we aim at giving sufficient conditions 
under which the assumptions of Proposition~\ref{prop:solution_convergence} are fulfilled. 
Recall again the setting in our conductivity example, where 
\begin{equation}\label{eq:defZ}
    Z = L^2(\Omega;\R^d) \times L^2(\Omega;\R^d)
\end{equation}
and
\begin{equation}\label{eq:defEE}
    \EE = \{ (\bq, \nabla u ) \in L^2(\Omega; \R^d) \times L^2(\Omega; \R^d) : 
    u \in H^1_0(\Omega), - \div \bq = f \}.
\end{equation}
For the approximation of $\EE$, we choose the discretized equilibrium constraint sets $\EE_{h_k}$ 
from \eqref{eq:defEEh}. The following theorem shows that such a discretization
can be included in the convergence analysis of \cite[Theorem~3.3]{CMO2018}.

\begin{theorem}\label{thm:data_lim}
    Let $Z$ and $\EE$ be given as in \eqref{eq:defZ} and \eqref{eq:defEE}, respectively, and 
    assume that a global material data set $\DD\subset Z$ and approximations thereof, 
    denoted by $\DD_k\subset Z$, $k\in \N$, are given. 
    Suppose moreover the following to hold:
    \begin{enumerate}[label=\textup{(\roman*)}]
        \item\label{it:dataclos}
        \emph{(Data closure)} 
        $\overline{\DD} \times \EE = \overline{\DD \times \EE}^\Delta$, 
        i.\,e., $\overline{\DD} \times \EE$ is the closure of $\DD \times \EE$ w.r.t.\ data convergence;
        \item\label{it:fineapprox}
        \emph{(Fine approximation)} 
        For each $\xi \in \DD$, 
        there is a sequence $\{ \xi_k \}_{k \in \N}$ with $\xi_k \in \DD_k$ for all $k \in \N$ such that
        $\xi_k \to \xi$ as $k \to \infty$;
        \item\label{it:uniapprox}
        \emph{(Uniform approximation)}
        There is a sequence $\{t_k\}_{k \in \N} \subset \R_{>0}$ with $t_k \searrow 0$ such that
        \begin{equation*}
            d(\xi,\DD) \coloneqq \inf_{y\in \DD} \|y-\xi\|_Z \leq t_k \quad \forall\, \xi \in \DD_k;        
        \end{equation*}
        \item\label{it:trans}
        \emph{(Transversality)}
        There are constants $c >0$ and $b \geq 0$ such that, for all $y \in \DD$ and $z \in \EE$,
        \begin{equation*}
            \| y - z \|_Z \geq c \big( \| y \|_Z + \| z \|_Z \big) - b ;
        \end{equation*}
	    \item\label{it:confdisc}
	    \emph{(Conforming discretization)} 
	    There is a monotonically decreasing sequence of mesh sizes $\{h_k\}_{k \in \N} \subset \R_{>0}$ 
	    with $h_k \searrow 0$ as $k \to \infty$ such that the discrete spaces $Q_k \coloneqq Q_{h_k}$ 
	    and $U_k \coloneqq U_{h_k}$ from the discrete equilibrium set $\EE_k \coloneqq \EE_{h_k}$ in 
	    \eqref{eq:defEEh} satisfy Assumption~\ref{assu:discret}. 
    \end{enumerate}
		Then the assumptions of Proposition~\ref{prop:solution_convergence} are fulfilled, i.e., 
		\begin{enumerate}[label=\textup{(\alph*)}]
			\item\label{it:dataconv}
			\emph{(Data convergence)} 
			$\overline{\DD} \times \EE = \deltalim_{k \to \infty} (\DD_{k} \times \EE_{k})$;
			\item\label{it:equitrans}
			\emph{(Equi-transversality)} 
			There are constants $c >0$ and $b \geq 0$ such that, for all $k\in \N$ and all  
			$(y, z) \in \DD_k \times \EE_{k}$, there holds
			\begin{align*}
				\| y - z \|_Z \geq c \big( \| y \|_Z + \| z \|_Z \big) - b.
			\end{align*}
		\end{enumerate}
	\end{theorem}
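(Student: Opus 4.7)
My strategy is to verify the hypotheses of Proposition~\ref{prop:solution_convergence} by establishing the two conditions \ref{it:DC1} and \ref{it:DC2} of data convergence and the equi-transversality inequality separately. The two decisive tools are already at hand: Lemma~\ref{lem:distEE} lifts each element of $\EE_{h_k}$ back into $\EE$ with strong $Z$-error tending to zero as $h_k \searrow 0$, and Lemma~\ref{lem:distEE_h} pushes each element of $\EE$ forward into $\EE_{h_k}$ with strong $Z$-error tending to zero; all further ingredients come from the standing hypotheses \ref{it:dataclos}--\ref{it:confdisc}.

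For \ref{it:DC1}, fix $(y,z) \in \overline{\DD} \times \EE$. The data closure hypothesis \ref{it:dataclos} supplies $(\tilde y_n, \tilde z_n) \in \DD \times \EE$ with $(y,z) = \deltalim_n (\tilde y_n, \tilde z_n)$. For every fixed $n$, the fine approximation hypothesis \ref{it:fineapprox} yields $y_n^k \in \DD_k$ with $y_n^k \to \tilde y_n$ strongly in $Z$, and Lemma~\ref{lem:distEE_h} yields $z_n^k \in \EE_k$ with $z_n^k \to \tilde z_n$ strongly in $Z$. A diagonal extraction---pick $K_n \nearrow \infty$ so that $\|y_n^k - \tilde y_n\|_Z + \|z_n^k - \tilde z_n\|_Z \leq 1/n$ for all $k \geq K_n$, then set $n(k) := \max\{n : K_n \leq k\}$---produces $(\xi^k, \eta^k) := (y_{n(k)}^k, z_{n(k)}^k) \in \DD_k \times \EE_k$ with $\xi^k - \tilde y_{n(k)} \to 0$ and $\eta^k - \tilde z_{n(k)} \to 0$ strongly. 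Splitting $\xi^k - \eta^k = (\tilde y_{n(k)} - \tilde z_{n(k)}) + (\xi^k - \tilde y_{n(k)}) - (\eta^k - \tilde z_{n(k)})$ and invoking $\tilde y_n \rightharpoonup y$, $\tilde z_n \rightharpoonup z$, $\tilde y_n - \tilde z_n \to y-z$, the three requirements of $(y,z) = \deltalim_k (\xi^k, \eta^k)$ follow at once.

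For \ref{it:DC2}, let $(y_j, z_j) \in \DD_{k_j} \times \EE_{k_j}$ data-converge to $(y,z)$. The uniform approximation hypothesis \ref{it:uniapprox} provides $\tilde y_j \in \DD$ with $\|\tilde y_j - y_j\|_Z \leq t_{k_j} + 1/j \to 0$, and Lemma~\ref{lem:distEE} applied to the $\EE_{h_{k_j}}$-sequence $\{z_j\}$ provides $\hat z_j \in \EE$ with $\|\hat z_j - z_j\|_Z \to 0$. Since both perturbations vanish strongly, $(\tilde y_j, \hat z_j) \in \DD \times \EE$ itself data-converges to $(y,z)$, and \ref{it:dataclos} then places $(y,z) \in \overline{\DD \times \EE}^\Delta = \overline{\DD} \times \EE$. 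The same two approximations establish equi-transversality: for arbitrary $(y,z) \in \DD_k \times \EE_k$, pick $\tilde y \in \DD$ with $\|y - \tilde y\|_Z \leq 2 t_k$ and $\hat z \in \EE$ with $\|z - \hat z\|_Z \leq \epsilon_{h_k}$, where, reading the proof of Lemma~\ref{lem:distEE}, $\epsilon_{h_k} := 2 \|\btau - \Pi_{h_k}\btau\|_{L^2(\Omega;\R^d)}$ depends only on the saddle-point solution $\btau$ associated with the right-hand side $(0,f)$---in particular, independent of $z$---so the bound is uniform over $\EE_k$ and vanishes as $h_k \searrow 0$. Applying \ref{it:trans} to $(\tilde y, \hat z) \in \DD \times \EE$ and combining the reverse triangle inequalities for $\|\tilde y\|_Z$, $\|\hat z\|_Z$ with $\|\tilde y - \hat z\|_Z \leq \|y - z\|_Z + \|y - \tilde y\|_Z + \|z - \hat z\|_Z$ yields
\[
\|y - z\|_Z \geq c (\|y\|_Z + \|z\|_Z) - b - (c+1)(2 t_k + \epsilon_{h_k}),
\]
so equi-transversality holds with $b' := b + (c+1)\sup_k(2 t_k + \epsilon_{h_k}) < \infty$.

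The only genuinely subtle point is the diagonal argument in \ref{it:DC1}: the approximations $\{y_n^k\}_k$, $\{z_n^k\}_k$ are inner strong-limits while $\{(\tilde y_n, \tilde z_n)\}_n$ represents the outer data-limit, so $n(k)$ must grow slowly enough to respect both convergence speeds, and the three mixed weak-plus-strong conclusions must be verified simultaneously. A secondary but essential technical observation, needed for equi-transversality, is the uniformity over $z \in \EE_k$ of the error bound coming out of Lemma~\ref{lem:distEE}; without this, the additive constant $b'$ could blow up with $k$.
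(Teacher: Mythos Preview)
Your proof is correct and follows essentially the same approach as the paper: the same diagonal construction for \ref{it:DC1} (combining the data-closure sequence with the inner strong approximations from \ref{it:fineapprox} and Lemma~\ref{lem:distEE_h}), the same lifting via \ref{it:uniapprox} and Lemma~\ref{lem:distEE} for \ref{it:DC2}, and the same triangle-inequality perturbation of \ref{it:trans} for equi-transversality. If anything, you are slightly more careful than the paper in two places---adding the $1/j$ slack when the infimum in \ref{it:uniapprox} need not be attained, and making explicit that the $\EE_k$-to-$\EE$ error $\epsilon_{h_k}$ extracted from the proof of Lemma~\ref{lem:distEE} is uniform over $z\in\EE_k$.
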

	
\begin{proof}
ad \ref{it:dataconv}, condition~\ref{it:DC1}: 
Let $(y,z) \in \overline{\DD} \times \EE$ be fixed but arbitrary. 
Our goal is to find a sequence $\{(y^*_k,z^*_k)\}_{k \in \N}$ with $(y^*_k,z^*_k) \in \DD_k \times \EE_{k}$ 
such that $(y,z) = \deltalim_{k \to \infty} (y^*_k,z^*_k)$. 
By \ref{it:dataclos}, there is a sequence $\{(\hat{y}_n, \hat{z}_n)\}_{n \in \N} \subset \DD \times \EE$ 
such that $(y,z) = \deltalim_{n \to \infty}(\hat{y}_n, \hat{z}_n)$. 
Due to \ref{it:fineapprox} and Lemma~\ref{lem:distEE_h}, for each $n \in \N$, there are sequences $\{y_{n,k}\}_{k \in \N}$ 
with $y_{n,k} \in \DD_k$ and $\{z_{n,k}\}_{k \in \N}$ with $z_{n,k} \in \EE_{k}$ 
and a finite number $m_n \in \N$ with $m_n \geq m_{n-1}+1$ such that
\begin{equation*}
    \| y_{n,k} - \hat{y}_n \|_Z < \frac{1}{n} \quad \text{and} \quad \| z_{n,k} - \hat{z}_n \|_Z < \frac{1}{n}
    \quad \forall\, k \geq m_n.
\end{equation*}
This of course gives rise to a diagonal sequence $\{ y_{n, m_n}, z_{n, m_n} \}$ with the desired properties, but, 
for each $(y,z) \in \overline{\DD} \times \EE$, one obtains a different sequence $\{m_n\}_{n\in \N}$ with 
different approximations $\DD_{m_n}$ and discretizations $\EE_{m_n}$.
To overcome this issue, let us define
\begin{equation*}
    \{(\hat{y}^*_k,\hat{z}^*_k)\}_{k \in \N} 
    \coloneqq \{ \underbrace{(\hat{y}_1,\hat{z}_1), \dots, (\hat{y}_1,\hat{z}_1)}_{\text{$(m_1-1)$-times}}, 
    \underbrace{(\hat{y}_1,\hat{z}_1), \dots, (\hat{y}_1,\hat{z}_1)}_{\text{$(m_2-m_1)$-times}} , 
    \underbrace{(\hat{y}_2,\hat{z}_2), \dots, (\hat{y}_2,\hat{z}_2)}_{\text{$(m_3-m_2)$-times}}, \dots \}
\end{equation*}
as well as
\begin{align*}
    \{ (y^*_k,z^*_k) \}_{k \in \N} \coloneqq \{ &(y_{1,1},z_{1,1}),\dots,(y_{1,m_1-1},z_{1,m_1-1}), \\
    & (y_{1,m_1}, z_{1,m_1}), \dots, (y_{1,m_2-1},z_{1,m_2-1}), \\
    & (y_{2,m_2},z_{2,m_2}), \dots, (y_{2,m_3-1},z_{2,m_3-1}), \dots \}.
\end{align*}
Then, by construction, $(y^*_k,z^*_k) \in \DD_k \times \EE_{k}$ for all $k \in \N$. 
Moreover, we have
\begin{equation}\label{eq:deltalimhat}
    (y,z) = \deltalim_{k \to \infty} (\hat{y}^*_k, \hat{z}^*_k)
\end{equation}
and, since, for each $n\in \N$ and all $k \geq m_{n}$, it holds
\begin{equation*}
    \| \hat{y}^*_k - y^*_k \|_Z \leq \frac{1}{n} \quad \text{and} \quad \| \hat{z}^*_k - z^*_k \|_Z \leq \frac{1}{n} ,
\end{equation*}
we obtain 
\begin{equation}\label{eq:convkstar}
    \| \hat{y}^*_k - y^*_k \|_Z \to 0 \quad \text{and} \quad 
    \| \hat{z}^*_k - z^*_k \|_Z \to 0 \quad \text{as } k \to \infty.
\end{equation}
By the definition of data convergence, \eqref{eq:deltalimhat} and \eqref{eq:convkstar} yield 
$y^*_k \weakly y$, $z^*_k \weakly z$, and
\begin{equation*}
    \| y^*_k - z^*_k - (y - z) \|_Z
    \leq \| y^*_k - \hat{y}^*_k \|_Z + \| \hat{y}^*_k - \hat{z}^*_k - (y-z) \|_Z + \| \hat{z}^*_k - z^*_k \|_Z \to 0,
\end{equation*}
which is nothing else than
\begin{equation*}
    (y,z) = \deltalim_{k \to \infty} (y^*_k, z^*_k)
\end{equation*}
with $ (y^*_k, z^*_k)\in \DD_k \times \EE_{k}$ for all $k \in \N$. Since $(y, z) \in \overline{\DD} \times \EE$ was
arbitrary, this implies \ref{it:DC1}.

ad (a), condition~\ref{it:DC2}: 
Suppose that $(y,z) = \deltalim_{j \to \infty} (y_j,z_j)$ in $Z \times Z$ 
with $(y_j,z_j) \in \DD_{k_j} \times \EE_{k_j}$ for all $j \in \N$ and a strictly monotonically increasing sequence 
$\{k_j\}_{j\in \N}$. We need to prove $(y,z) \in \overline{\DD} \times \EE$.
By \ref{it:uniapprox} and Lemma~\ref{lem:distEE}, there exist $\hat{y}_j \in \DD$ and $\hat{z}_j \in \EE$ 
such that 
\begin{equation*}
    \| \hat{y}_j - y_j \|_Z \leq t_{k_j} \quad \text{and} \quad \|\hat{z}_j - z_j \|_Z \to 0
    \text{ as } j \to \infty. 
\end{equation*}
Consequently, $\hat{y}_j \rightharpoonup y$, $\hat{z}_j \rightharpoonup z$ and $\hat{y}_j - \hat{z}_j \to y - z$
so that $(y,z) = \deltalim_{j \to \infty} (\hat{y}_j , \hat{z}_{j})$. 
Thus \ref{it:dataclos} implies $(y,z) \in \overline{\DD} \times \EE$.
					
ad (b): Let $k\in \N$ and $(y,z) \in \DD_k \times \EE_{h_k}$ be arbitrary. 
By the uniform approximation property \ref{it:uniapprox}, there is $\hat{y} \in \DD$ with $\| y - \hat{y} \|_Z < t_k$ 
and by Lemma~\ref{lem:distEE} there exists $\hat{z} \in \EE$ with 
$\| z - \hat{z} \|_Z \leq c\, h_k \| f \|_{L^2(\Omega)} =: r_k$. Therefore, \ref{it:trans} implies
\begin{equation*}
    \| y - z \|_Z \geq c \big( \| y \|_Z + \| z \|_Z \big) - b - (c+1) r_k - (c+1) t_k.
\end{equation*}
Since the sequences $t_k$ and $r_k$ are bounded, equi-transversality holds with
$b' \coloneqq b + (1+c)  (\max_{k \in \N} t_k + \max_{k \in \N} r_k)$.
\end{proof}
	
\begin{remark}\label{rem:dataclos}
    Since $\EE$ as defined in \eqref{eq:defEE} is closed and convex and thus weakly closed and 
    data convergence implies weak convergence, the set $\EE$ itself arises in the data closure 
    in \ref{it:dataclos}. The situation changes, if one turns to the material data set $\DD$. 
    Of course, if the constitutive law coupling $\bq$ and $\nabla u$ is linear such as 
    in case of Fourier's law for instance, $\DD$ is weakly closed, too, such that 
    $\DD \times \EE = \overline{\DD \times \EE}^\Delta$. By contrast, if the constitutive law is nonlinear, then 
    $\overline{\DD}$ will in general differ from the $Z$-closure of $\DD$, but also from the closure of its convex hull.
    The latter is due to the fact that data convergence provides more information than just weak convergence.
    The computation of data closures is a field of active research, we only refer to \cite{RS20} and the references therein.
\end{remark}	

So far we have focused on the discretization of the set $\EE$. A possible discretization of the set $\DD$ is given by piecewise constant functions. To fulfill condition \ref{it:fineapprox} of Theorem~\ref{thm:data_lim}, 
we need to bound the distance between the values of those piecewise constant functions 
and the values of the functions in $\DD$ by a monotonically decreasing sequence that converges to zero, 
which is done in the following 

\begin{proposition}\label{prop:DDconv}
    Let a monotonically decreasing sequence $\{h_k\}_{k \in \N} \subset \R_{>0}$ with ${h_k \to 0}$     
    and a corresponding sequence of shape regular triangulations $\TT_{h_k}$ of $\Omega$ be given. Let
    \begin{equation}\label{eq:DDex}
	    \DD \coloneqq \{ y \in Z : y(x) \in \DD^{\loc} \text{ a.e.\ in } \Omega \}
    \end{equation}
    with $\DD^{\loc} \subset \R^{d} \times \R^d$ and
    \begin{equation}\label{eq:DDexk}
        \DD_k \coloneqq \{ y \in Z : y(x)  \in \DD^{\loc}_k \text{ a.e.\ in } \Omega, \; 
        y \vert_T \in \PP_0(T) \;\forall \, T \in \TT_{h_k} \}
    \end{equation}
    with $\DD^{\loc}_k \subset \R^d \times \R^d$ be given. 
    Moreover, assume that there is a sequence
    $\{\rho_k\}_{k \in \N}$ with $\rho_k \searrow 0$ such that the Hausdorff distance between 
    $\DD^{\loc}$ and $\DD^{\loc}_k$ satisfies 
    \begin{equation}\label{eq:DDlocapprox}
        d_{\textup H}(\DD^{\loc}, \DD^{\loc}_k)
        = \max\Big\{ \sup_{\xi \in \DD^{\loc}} d(\xi, \DD^{\loc}_k), 
        \sup_{\eta \in \DD^{\loc}_k}  d(\eta, \DD^{\loc}) \Big\}
		\leq \rho_k.	
    \end{equation}
    Then $\DD$ and $\DD_k$ as defined in \eqref{eq:DDex} and \eqref{eq:DDexk}, respectively, 
    satisfy the fine and uniform approximation assumption \ref{it:fineapprox} and \ref{it:uniapprox} 
    in Theorem~\ref{thm:data_lim}.
\end{proposition}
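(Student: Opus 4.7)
The plan is to establish the two approximation properties \ref{it:fineapprox} and \ref{it:uniapprox} of Theorem~\ref{thm:data_lim} separately, exploiting the Hausdorff bound \eqref{eq:DDlocapprox} in both cases.

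For the uniform approximation \ref{it:uniapprox}, which is the easier direction, I would proceed element-wise: any $\eta \in \DD_k$ is piecewise constant on $\TT_{h_k}$ with $\eta|_T \equiv v_T \in \DD^{\loc}_k$, and for each $T$ the Hausdorff bound lets me pick $w_T \in \DD^{\loc}$ with $|v_T - w_T| \leq \rho_k + 1/k$ (the $1/k$ absorbs a possibly non-attained infimum). Defining $\hat\eta$ piecewise by $\hat\eta|_T \coloneqq w_T$ produces $\hat\eta \in \DD$ with $\|\eta - \hat\eta\|_Z \leq (\rho_k + 1/k)|\Omega|^{1/2}$, so setting $t_k \coloneqq (\rho_k + 1/k)|\Omega|^{1/2}$ yields a sequence with $t_k \searrow 0$ that is uniform in $\eta$.

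For the fine approximation \ref{it:fineapprox}, given $\xi \in \DD$ I would construct a sequence $\xi_k \in \DD_k$ with $\xi_k \to \xi$ in $Z$ via four stages combined by a diagonal argument. First, apply measure-theoretic simple-function approximation: truncate to the set $\{|\xi| \leq M\}$ to reduce to a bounded image, cover the bounded set $\DD^{\loc} \cap \bar B_M$ by finitely many balls centered at points $c_i \in \DD^{\loc}$, and take preimages to obtain a measurable partition $\{A_i\}_{i=1}^N$ of $\Omega$ and a simple function $\hat\xi = \sum_i c_i \chi_{A_i}$ with $\|\xi - \hat\xi\|_Z$ arbitrarily small. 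Second, approximate each $A_i$ in measure by a polyhedral set and adjust so that the resulting sets $\{B_i\}$ form a partition of $\Omega$. Third, discretize onto the mesh $\TT_{h_k}$ by setting $i(T) \coloneqq \arg\max_i |T \cap B_i|$ and $\tilde\xi_k|_T \coloneqq c_{i(T)}$; shape-regularity ensures that the total volume of elements meeting $\bigcup_i \partial B_i$ is of order $h_k$, so $\|\sum_i c_i \chi_{B_i} - \tilde\xi_k\|_Z \to 0$ as $h_k \searrow 0$. Finally, use \eqref{eq:DDlocapprox} to select $c_i^k \in \DD^{\loc}_k$ with $|c_i - c_i^k| \leq \rho_k + 1/k$ and define $\xi_k|_T \coloneqq c_{i(T)}^k$; then $\xi_k \in \DD_k$ and $\|\tilde\xi_k - \xi_k\|_Z \leq (\rho_k + 1/k)|\Omega|^{1/2}$. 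Diagonalizing the tolerances of the first two stages against $h_k$ and $\rho_k$ then delivers the desired $\xi_k \to \xi$.

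The main technical obstacle is the first stage: approximating a general $L^2$ function with a.e.\ values in a set $\DD^{\loc}$ (which is not assumed convex or even closed) by a simple function whose values lie genuinely in $\DD^{\loc}$. Element-wise averaging fails here, since convex combinations of points in $\DD^{\loc}$ need not belong to $\DD^{\loc}$, so one must cover the essential image of $\xi$ by balls centered at points of $\DD^{\loc}$ and select preimages. All remaining ingredients---polyhedral approximation of measurable sets, the $O(h_k)$ bound for the volume of mesh elements in a tubular neighbourhood of a polyhedron, and the Hausdorff estimate---are classical.
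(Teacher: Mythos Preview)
Your proof of the uniform approximation \ref{it:uniapprox} is essentially the same as the paper's, with your $1/k$ slack being a slightly more careful treatment of the possibly non-attained infimum.

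For the fine approximation \ref{it:fineapprox}, your four-stage construction is correct, but the paper uses a considerably shorter argument that bypasses your Stages~1--3 entirely and, in particular, shows that your ``main technical obstacle'' is not an obstacle at all. The paper simply takes $v_k$ to be the $L^2$-projection of $y$ onto piecewise constants on $\TT_{h_k}$ (i.e., element-wise averaging), so $\|y - v_k\|_Z \leq \varepsilon$ for $k$ large; as you note, $v_k$ need not take values in $\DD^{\loc}$. The key trick is then to \emph{project back}: on each element $T$ choose $\bar y_k|_T \in \DD^{\loc}$ with $\|\bar y_k|_T - v_k|_T\| \leq \inf_{\xi \in \DD^{\loc}} \|\xi - v_k|_T\| + \varepsilon$. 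Since $y(x) \in \DD^{\loc}$ for a.e.\ $x \in T$, the infimum is bounded by $\|y(x) - v_k(x)\|$ for a.e.\ $x$, so pointwise $\|\bar y_k(x) - v_k(x)\| \leq \|y(x) - v_k(x)\| + \varepsilon$, and taking $L^2$-norms gives $\|v_k - \bar y_k\|_Z \leq \|v_k - y\|_Z + \sqrt{|\Omega|}\,\varepsilon$. A final element-wise replacement via \eqref{eq:DDlocapprox} (your Stage~4) then yields $y_k \in \DD_k$ with $\|y - y_k\|_Z \lesssim \varepsilon$.

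So your covering-by-balls, polyhedral approximation of measurable sets, majority-vote discretization, and diagonalization are all avoidable: averaging followed by nearest-point projection onto $\DD^{\loc}$ does the job in two lines, precisely because $y$ itself already takes values in $\DD^{\loc}$ and hence controls the projection error. Your route does work and has the minor advantage of not relying on density of piecewise constants in $L^2$ (it builds the mesh approximation by hand), but at the cost of substantially more bookkeeping.
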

	
	\begin{proof}
		Define 
		$V_{h_k} \coloneqq \{ v: \Omega \to \R^d \times \R^d : v \vert_T \equiv \mathrm{const.} \; \forall \, T \in \TT_{h_k} \}$. 
		Then, by standard interpolation error analysis, 
		$\bigcup_{k \in \N} V_{h_k}$ is dense in $Z$. 
		Therefore, for $y \in \DD$ and $\varepsilon >0$ fixed, but arbitrary, 
		there exist $k^\varepsilon_1 \in \N$ such that for all $k \geq k^\varepsilon_1$
		there is a $v_k \in V_{h_k}$ with 
		\begin{equation}\label{eq:y_vk}
			\| y - v_k \|_Z \leq \varepsilon.
		\end{equation}
		Define $\bar{y}_k \in V_{h_k}$, $k \in \N$, by
		\begin{align*}
		    & \bar y_k(x) \in \DD^{\loc} \quad \text{a.e.\ in }\Omega,\\
			& \|\bar{y}_k(x) - v_k(x)\|_{\R^d \times \R^d} 
			\leq \inf_{\xi \in \DD^{\loc}} \| \xi - v_k(x) \|_{\R^d\times \R^d} + \varepsilon
			\quad \text{a.e.\ in } x \in \Omega.
		\end{align*}
		Note that $\bar y_k$ is well defined, since $v_k$ is constant on each $T\in \TT_{h_k}$.
		Then, $\bar{y}_k \in \DD$ and
		\begin{equation}\label{eq:baryk_vk}
			\| v_k - \bar{y}_k \|_Z \leq \| v_k - y \|_Z + \sqrt{|\Omega|} \, \varepsilon
		\end{equation}
		for all $k \geq k^\varepsilon_1$. 
		Moreover, there is $k^\varepsilon_2 \in \N$ such that $\rho_k \leq \varepsilon$ for all $k \geq k^\varepsilon_2$.
		Hence, according to \ref{eq:DDlocapprox}, for each $k \geq k^\varepsilon_2$, there is 
		an $y_k \in \DD_k$ such that
		\begin{equation}\label{eq:baryk_yk}
			\| \bar{y}_k - y_k \|_Z \leq \sqrt{|\Omega|} \, \varepsilon.
		\end{equation}
       Altogether, \eqref{eq:y_vk}--\eqref{eq:baryk_yk} yield
       $\| y - y_k \|_Z \leq (1 + 2\sqrt{|\Omega|}) \varepsilon$ for all 
       $k \geq \max \{k^\varepsilon_1, k^\varepsilon_2\}$, which along with $y_k \in \DD_k$ implies \ref{it:fineapprox}.
		
	To verify \ref{it:uniapprox}, let now $k\in \N$ and $y_k \in \DD_k$ be fixed, but arbitrary. 
	Then, by definition of $\DD_k$, there exist 
	$\xi^{(k)}_T \in \DD^{\loc}_k$, $T \in \TT_{h_k}$, such that 
	$y_k = \sum_{T\in \TT_{h_k}} \xi^{(k)}_T \, \chi_T$ a.e.\ in $\Omega$. 
	In view of \eqref{eq:DDlocapprox}, for every $T$, we find $\xi_T \in \DD^{\loc}$    	
	such that $\|\xi_T - \xi_T^{(k)}\|_{\R^d \times \R^d} \leq \rho_k$. Therefore, if we define 
    $y\in \DD$ by $y \coloneqq \sum_{T\in \TT_{h_k}} \xi_T \, \chi_T$,
    then 
    \begin{equation*}
        \| y_k - y \|_Z^2 = \sum_{T\in \TT_{h_k}} \int_T \|\xi_T - \xi_T^{(k)}\|_{\R^d \times \R^d}^2 \, dx
        \leq |\Omega| \, \rho_k^2,
    \end{equation*}
    which is \ref{it:uniapprox} with $t_k = \sqrt{|\Omega|} \, \rho_k$.
\end{proof}

Let us denote the number of elements in $\TT_{h_k}$ by $N_k \coloneqq |\TT_{h_k}|$. 
Then $\DD_k$ as defined in \eqref{eq:DDexk} is isomorphic to the finite dimensional set 
\begin{equation*}
    \D_k \coloneqq \{ A \in \R^{N_k \times d \times d} : A_i \in \DD^{\loc}_k \; \forall \, i = 1, ..., N_k\},
\end{equation*}
which is clearly compact provided that $\DD^{\loc}_k$ is so. This observation immediately implies 
the following

\begin{proposition}\label{prop:Pkexists}
    Suppose that $\DD$, given as in \eqref{eq:DDex}, is discretized as in Proposition~\ref{prop:DDconv} with 
    approximate local material data sets $\DD_k^{\loc}$ that are compact for every $k\in \N$. 
    Assume moreover, that the equilibrium constraint set is discretized as in \eqref{eq:defEEh} with spaces 
    $Q_{h_k}$ and $U_{h_k}$ satisfying Assumption~\ref{assu:discret}. 
    Then, for each $k\in \N$, the discretized data-driven problem given by
    \begin{equation}\tag{P$_k$} \label{eq:Pk}
        \left. 
        \begin{aligned}
            \min \quad &  \tfrac{1}{2} \| y-z \|_Z^2 \\
            \text{\textup{s.t.}} \quad & y \in \DD_k, \, z \in \EE_{k}
        \end{aligned}
        \quad \right\}
    \end{equation}   
    admits a globally optimal solution.  
\end{proposition}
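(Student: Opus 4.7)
The plan is to recast \eqref{eq:Pk} as minimization of a continuous function over a compact set. First I would confirm that $\DD_k$ is compact as a subset of $Z$: the identification $y \leftrightarrow (\xi_T)_{T \in \TT_{h_k}}$ between $\DD_k$ and $\D_k$ is a homeomorphism (it is linear and both spaces are finite dimensional, and the $Z$-norm of a piecewise constant function is equivalent to the Euclidean norm of its coefficients weighted by $\sqrt{|T|}$), so compactness of $\D_k$ transfers directly.

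Next I would verify that $\EE_k$ is a closed, non-empty subset of $Z$. The discrete divergence constraint $-\div_h \bq_h = f$ is a linear equation on the finite-dimensional space $Q_{h_k}$, and together with $\nabla U_{h_k}$ it defines an affine subspace of the finite-dimensional space $Q_{h_k} \times \nabla U_{h_k} \subset Z$, hence a closed subset of $Z$. Non-emptiness follows from Lemma~\ref{lem:saddleh}\ref{it:saddlehexist} applied with $F = 0$, which produces some $\btau_h \in Q_{h_k}$ with $-\div_h \btau_h = f$; then $(\btau_h, 0) \in \EE_k$ since $0 \in U_{h_k}$.

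The key step is to define $\phi : \DD_k \to \R$ by $\phi(y) \coloneqq \min_{z \in \EE_k} \tfrac{1}{2}\|y-z\|_Z^2$. Since $\EE_k$ is a non-empty closed convex set in the Hilbert space $Z$, the projection theorem guarantees that the minimum is attained at a unique $P_{\EE_k}(y) \in \EE_k$, and the projection map $y \mapsto P_{\EE_k}(y)$ is $1$-Lipschitz. Consequently $\phi$ is continuous on $\DD_k$.

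Finally, since $\phi$ is continuous on the compact set $\DD_k$, it attains its minimum at some $y^* \in \DD_k$. Setting $z^* \coloneqq P_{\EE_k}(y^*) \in \EE_k$, the pair $(y^*, z^*)$ is a global solution of \eqref{eq:Pk}. I do not foresee a serious obstacle; the only points requiring care are the closedness of $\EE_k$ in $Z$ (immediate from finite-dimensionality of the ambient space of discrete fields) and the existence of at least one element in $\EE_k$, which is exactly what Lemma~\ref{lem:saddleh}\ref{it:saddlehexist} supplies.
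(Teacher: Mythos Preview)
Your argument is correct and follows the same overall strategy as the paper: show that $\DD_k$ is compact, reduce \eqref{eq:Pk} to minimizing a continuous function of $y$ alone over $\DD_k$, and apply Weierstrass. The only difference lies in how the inner minimization over $\EE_k$ is handled. You invoke the Hilbert space projection theorem, using that $\EE_k$ is a non-empty closed affine subspace and that the metric projection is $1$-Lipschitz. The paper instead writes out the KKT system \eqref{eq:KKTh} for the inner problem and appeals to Lemma~\ref{lem:saddleh}\ref{it:saddlehexist} and the a~priori bound \eqref{eq:aprioriboundh} to obtain an explicit affine continuous solution operator $G_h$. Your route is slightly more economical for the existence proof; the paper's construction has the side benefit that $G_h$ is exactly the projection $\pi_{\EE_k}$ used later in the algorithmic Section~\ref{sec:prox}, so the extra work is not wasted.
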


\begin{proof}
    Throughout the proof, let us suppress the index $k$ in $h_k$ to simplify the notation.
    First we rewrite \eqref{eq:Pk} as 
    \begin{equation*}
    \eqref{eq:Pk}
    \quad \Longleftrightarrow \quad 
    \left\{\quad 
    \begin{aligned}
        \min_{(\br, \bw)\in \DD_k} \quad
        \min_{(\bq, u)}   \quad &
        \tfrac{1}{2}\,\|(\nabla u_h, \bq_h) - (\bw, \br)\|_{L^2(\Omega;\R^d)^2}^2\\
         \text{s.t.} \quad & \bq_h \in Q_{h}, \; \,u_h \in U_{h},\;\,
         - \div_h \bq_h = f.
    \end{aligned}
    \right.
    \end{equation*}
    By standard arguments, the direct method of calculus of variations yields the existence and uniqueness 
    of a solution to the inner minimization problem. Due to strict convexity, it is uniquely characterized 
    by its necessary and sufficient conditions, which read as follows: 
    Thanks to the surjectivity of $\div_h: Q_h \to V_h^*$, a tuple 
    $(\bq_h, u_h)$ is a solution of the inner minimization problem, 
    iff there exists a Lagrange multiplier $\lambda_h \in V_h$ such that 
    \begin{subequations}\label{eq:KKTh}
    \begin{alignat}{3}
        \int_\Omega \nabla u_h \cdot \nabla \varphi_h\, dx & = 
        \int_\Omega \bw \cdot \nabla \varphi_h\, dx & \quad & \forall\, \varphi_h \in U_{h}
        \label{eq:laplacediscr}\\
        \int_\Omega (\bq_h \cdot \bw_h + \lambda_h \,  \div \bw_h )\, dx
        &= \int_\Omega \br \cdot \bw_h\, dx  && \forall \, \bw_h \in Q_{h} \label{eq:saddle_a}\\
        - \int_\Omega v_h \, \div \bq_h \, dx &=  \int_\Omega  f \, v_h \, dx  && \forall v_h \in V_{h}. 
        \label{eq:saddle_b} 
    \end{alignat}        
    \end{subequations}
    By Lemma~\ref{lem:saddleh}\ref{it:saddlehexist}, the saddle point system \eqref{eq:saddle_a}--\eqref{eq:saddle_b}
    admits a unique solution $(\bq_h, \lambda_h) \in Q_{h} \times V_{h}$ for every right hand side 
    $(\br, f) \in L^2(\Omega;\R^d) \times L^2(\Omega)$ and the associated solution operator is linear and continuous 
    by \eqref{eq:aprioriboundh}. Moreover, since $U_h$ is a closed subspace of $H^1_0(\Omega)$, 
    the same holds for the discretized Laplace equation in \eqref{eq:laplacediscr}, 
    i.e., for every $\bw \in L^2(\Omega;\R^d)$, 
    there is a unique solution $u_h \in U_{h}$ and the solution mapping is linear and continuous.
    Thus, there is an affine (due to $f$) and continuous solution operator of \eqref{eq:KKTh} denoted by 
    \begin{equation}\label{eq:defGh}
        G_h : L^2(\Omega;\R^d)^2 \ni (\br, \bw) \mapsto (\bq_h, \nabla u_h) \in Q_h \times \nabla U_h. 
    \end{equation}
    With the help of $G_h$, we can rewrite \eqref{eq:Pk} equivalently as 
    \begin{equation*}
        \eqref{eq:Pk} 
        \quad \Longleftrightarrow \quad 
        \min_{(\br, \bw)\in \DD_k} \; \tfrac{1}{2}\,\|(G_h - \id)(\br, \bw)\|_{L^2(\Omega;\R^d)^2}^2
    \end{equation*}
    Therefore, since $\DD_k$ is compact as explained above and $G_h$ and thus the whole objective 
    is continuous, the existence of a globally optimal solution follows from the Weierstrass theorem.
\end{proof}

\begin{remark}
    Note that the result of Proposition~\ref{prop:Pkexists} also holds for a problem of the form
    \begin{equation}\tag{$\tilde{\textup P}_k$} 
        \left. 
        \begin{aligned}
            \min \quad &  \tfrac{1}{2} \| y-z \|_Z^2 \\
            \text{\textup{s.t.}} \quad & y \in \DD_k, \, z \in \EE
        \end{aligned}
        \quad \right\}
    \end{equation}
    with the continuous equilibrium set $\EE$ instead of $\EE_k$. The arguments are completely the same as 
    in the proof of Proposition~\ref{prop:Pkexists}, since the solution operators associated with the continuous 
    counterparts to the saddle point problem and the Laplace equation are also linear and continuous.
    Thus, to ensure the mere existence of optimal solutions, only the discretization of $\DD$ is necessary,
    whereas the numerical computation of optimal solutions of course requires a discretization of $\EE$, too.
\end{remark}

As a direct consequence of the previous results, namely Theorem~\ref{thm:data_lim} and 
Propositions~\ref{prop:DDconv} and \ref{prop:Pkexists}, we obtain the following result, which, 
though it is just a corollary as a consequence of the above findings, can be seen as our main result:
	
\begin{corollary}\label{cor:data_lim}
    Let $Z$, $\EE$, and $\DD$ be defined as in \eqref{eq:defZ}, \eqref{eq:defEE}, and \eqref{eq:DDex}.
    Furthermore, let $\{h_k\}_{k \in \N} \subset \R_{>0}$ be a monotonically decreasing sequence of mesh sizes 
    with $h_k \searrow 0$ as $k \to \infty$ and suppose the following assumptions:
    \begin{enumerate}[label=\textup{(\roman*)}]
        \item
        \emph{(Transversality)}
        There are constants $c >0$ and $b \geq 0$ such that, for all $y \in \DD$ and $z \in \EE$,
        \begin{equation*}
            \| y - z \|_Z \geq c \big( \| y \|_Z + \| z \|_Z \big) - b.
        \end{equation*}
        \item \emph{(Discrete material data set)}
        The set $\DD$ is approximated as in Proposition~\ref{prop:DDconv}, i.e., 
        \begin{equation*}
            \DD_k \coloneqq \{ y \in Z : y(x) \in \DD^{\loc}_k \text{ a.e.\ in } \Omega, \; 
            y \vert_T \in \PP_0(T) \;\forall \, T \in \TT_{h_k} \}
        \end{equation*}
        and the Hausdorff distance fulfills 
        $d_{\textup H}(\DD^{\loc}, \DD^{\loc}_k) \leq \rho_k$ with a 
        sequence $\{\rho_k\}_{k \in \N}$ with $\rho_k \searrow 0$.
        Moreover, $\DD^{\loc}_k \subset \R^d \times \R^d$ is compact for every $k\in \N$.
	    \item
	    \emph{(Conforming discretization)} 
	    The finite dimensional spaces $Q_{k}$ and $U_{k}$ defining the discrete equilibrium set $\EE_k$
	    satisfy Assumption~\ref{assu:discret}. 
    \end{enumerate}
    Then, for every $k\in \N$, there exists at least one solution of  
    \begin{equation}\tag{P$_k$}\label{eq:Pk2}
        \left. 
        \begin{aligned}
            \min \quad &  \tfrac{1}{2} \| y-z \|_Z^2 \\
            \text{\textup{s.t.}} \quad & y \in \DD_k, \, z \in \EE_{k}
        \end{aligned}
        \quad \right\}
    \end{equation}
    and every sequence $\{(y_k,z_k)\} \subset Z \times Z$ of such solutions satisfies the following:
    \begin{enumerate}[label=\textup{(\alph*)}]
        \item If $\overline{\DD \times \EE}^\Delta \neq \emptyset$, then $\| y_k - z_k \|_Z \to 0$.
        \item If $\|y_k-z_k\|_Z \to 0$, then there exists $z \in \overline{\DD \times \EE}^\Delta$ such that, 
        up to subsequences, $(z,z) = \deltalim_{k \to \infty}(y_k,z_k)$.
    \end{enumerate}
\end{corollary}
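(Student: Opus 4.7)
The plan is to realize this corollary as a direct composition of four earlier results already in place. Existence of solutions to \eqref{eq:Pk2} is handed to me by Proposition~\ref{prop:Pkexists}. The two structural properties needed by Proposition~\ref{prop:solution_convergence}, namely data convergence of $(\DD_k \times \EE_k)$ and equi-transversality, are precisely the conclusions of Theorem~\ref{thm:data_lim}, whose hypotheses I only need to verify one by one. The convergence of optimizers in (a) and (b) then follows by feeding the resulting data convergence and equi-transversality back into Proposition~\ref{prop:solution_convergence}.

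\textbf{Verifying the hypotheses.} For the existence claim, the compactness of $\DD^{\loc}_k$ and the conforming Raviart--Thomas-type discretization of $\EE_k$ are assumed in the corollary, so Proposition~\ref{prop:Pkexists} applies verbatim. To invoke Theorem~\ref{thm:data_lim}, I would check its five hypotheses: transversality (iv) and conforming discretization (v) are copied from the corollary; fine approximation (ii) and uniform approximation (iii) of $\DD$ by $\DD_k$ come straight out of Proposition~\ref{prop:DDconv}, whose assumptions on the Hausdorff distance $d_{\textup{H}}(\DD^{\loc}, \DD^{\loc}_k) \leq \rho_k \searrow 0$ and on the piecewise-constant structure of $\DD_k$ are exactly the ones stated in the corollary; and the data-closure identity (i) is harmlessly built into the notation $\overline{\DD \times \EE}^\Delta$ on the right-hand side of the asserted limit, since $\EE$ is weakly and hence data-closed, so that (as in Remark~\ref{rem:dataclos}) the product closure reduces to $\overline{\DD}^\Delta \times \EE$.

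\textbf{Concluding (a) and (b).} With data convergence and equi-transversality at hand, Proposition~\ref{prop:solution_convergence} applies. For (b), optimality together with the assumption $\|y_k - z_k\|_Z \to 0$ yields $F_k(y_k, z_k) \to 0$, and part (a) of that proposition then produces a subsequential $\Delta$-limit $(z, z)$ with $z$ belonging to the data-closure, which is exactly the stated conclusion. For (a), the nonemptiness of $\overline{\DD \times \EE}^\Delta$ amounts to the existence of a matching element $z$ with $(z, z)$ in this set; part (b) of the proposition supplies a recovery sequence $(y_k', z_k') \in \DD_k \times \EE_k$ with $F_k(y_k', z_k') \to 0$, and optimality of $(y_k, z_k)$ then forces $\|y_k - z_k\|_Z \leq \|y_k' - z_k'\|_Z \to 0$. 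I expect no genuine technical obstacle here, as the analytic work has already been invested in Lemmas~\ref{lem:distEE}--\ref{lem:distEE_h}, Proposition~\ref{prop:DDconv}, and Theorem~\ref{thm:data_lim}. The only care needed is bookkeeping around the meaning of $\overline{\DD \times \EE}^\Delta$ and the reading of ``$z \in \overline{\DD \times \EE}^\Delta$'' as ``$(z, z)$ lies in this data-closure'', which I would make explicit just before quoting Proposition~\ref{prop:solution_convergence}.
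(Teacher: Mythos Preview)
Your proposal is correct and follows exactly the route the paper indicates: the corollary is stated there as ``a direct consequence of Theorem~\ref{thm:data_lim} and Propositions~\ref{prop:DDconv} and \ref{prop:Pkexists}'', and you have correctly unpacked how these feed into Proposition~\ref{prop:solution_convergence} to yield (a) and (b). Your observation that the data-closure hypothesis~\ref{it:dataclos} of Theorem~\ref{thm:data_lim} is absorbed into the notation $\overline{\DD \times \EE}^\Delta$ (with $\EE$ affine and weakly closed, cf.\ Remark~\ref{rem:dataclos}), and that ``$z \in \overline{\DD \times \EE}^\Delta$'' must be read as ``$(z,z)$ lies in this closure'', is exactly the bookkeeping the paper leaves implicit.
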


We again underline that the data closure $\overline{\DD \times \EE}^\Delta$ is in general not equal to 
$\DD \times \EE$, but $\overline{\DD} \times \EE$ with an enlarged set $\overline{\DD} \supset \DD$, 
see Remark~\ref{rem:dataclos}. 
Corollary~\ref{cor:data_lim} shows that, under the mentioned assumptions, 
it is in principle possible to approximate elements of the data closure.
The computation of solutions to \eqref{eq:Pk2} may however be a very delicate issue, depending 
on the precise structure of $\DD^{\loc}_k$.
Before we address this issue in more details in the Section~\ref{sec:algo} below, let us shortly discuss the question
why the use of $H(\div)$-conforming finite elements like the Raviart-Thomas 
element seems to be indispensable for the discretization of the data-driven problem \eqref{DDP}.


\section{Why $H(\div)$-conforming finite elements?}\label{sec:whyHdiv}

As the construction of $H(\div)$-conforming finite elements is rather complicated compared 
to e.g.\ classical Lagrangian finite elements, the question arises if their use is really necessary 
for the discretization of data-driven problems. 
To answer this question, let us assume that we do not use $H(\div)$-conforming elements, i.e., 
we discretize the equilibrium set $\EE$ by a finite dimensional space $\widehat{Q}_h \subset L^2(\Omega;\R^d)$ with
$\widehat{Q}_h \not\subset H(\div)$ and define the discrete divergence condition by
\begin{equation}\label{eq:divhatQ}
    \bq_h \in \widehat{Q}_h, \quad
    \int_\Omega \bq_h \cdot \nabla v_h \,dx = \int_\Omega f\,v_h\, dx 
    \quad \forall\, v_h \in \widehat{V}_h, 
\end{equation}
where $\widehat{V}_h$ is a finite dimensional subspace of $H^1_0(\Omega)$.
Given a triangulation $\TT_h$ of the domain $\Omega$, 
a classical example for these finite dimensional spaces reads
\begin{align}
    \widehat{Q}_h &= \{ \bw \in L^\infty(\Omega;\R^d) \colon \bw|_{T} \in \PP_0(T)\; \forall\, T \in \TT_h\}, 
    \label{eq:P0}\\
    \widehat{V}_h &= \{ v \in C(\overline\Omega) \cap H^1_0(\Omega)\colon v|_{T} \in \PP_1(T)\; \forall\, T \in \TT_h\}.
    \label{eq:classicFE}
\end{align}

Revisiting the proof of Theorem~\ref{thm:data_lim} shows that 
a central aspect of convergence analysis is that elements from $\EE$ can be approximated 
by elements from $\EE_h$ w.r.t.\ the data topology and vice versa.
Let us return to part \ref{it:dataconv} in the proof of Theorem~\ref{thm:data_lim}. 
When verifying condition~\ref{it:DC2} from the definition of data convergence, one
considers a sequence $\{(y_h, z_h)\}_{h>0}$, $(y_h, z_h) \in \DD_{h} \times \EE_{h}$, 
converging in data to $(y,z)$. To show that $(y,z)$ is an element of the data closure, 
we need to prove the existence of a sequence $(\hat y_h, \hat z_h) \in \DD \times \EE$ with 
$\hat y_h \weakly y$, $\hat z_h \weakly z$, and $\hat y_h - \hat z_h  \to y - z$, which, in view of the 
data convergence of $\{(y_h, z_h)\}$ is equivalent to
\begin{equation}\label{eq:dataconvdiff}
  \hat y_h - y_h \weakly 0, \quad \hat z_h - z_h \weakly 0, \quad \hat y_h - y_h + z_h - \hat z_h \to 0.   
\end{equation}
If we assume that the data approximation satisfies the uniform approximation property \ref{it:uniapprox} 
from Theorem~\ref{thm:data_lim}, then we already know that a sequence $\{\hat y_h\}_{h>0} \subset \DD$ exists with 
$\hat y_h - y_h \to 0$ in $Z = L^2(\Omega;\R^d)^2$. Therefore, if we take this sequence for $\hat y_h$, then 
the sequence $\{\hat z_h\}_{h>0}\subset \EE$ must necessarily fulfill 
\begin{equation}\label{eq:zstrong}
    \hat z_h - z_h \to 0 \quad \text{in } Z
\end{equation}
in order to guarantee \eqref{eq:dataconvdiff}. This however is not always possible, if 
nonconforming finite element spaces are used, as we will see in the following when
considering the flux component of 
\begin{equation*}
    z_h = (\bq_h, \nabla u_h) \in \EE_{h} \subset (\widehat{Q}_{h}, \nabla U_{h}) .
\end{equation*}
The best possible choice for the construction of the desired elements from $\EE$ is of course to choose the 
solution $\hat \bq_h \in H(\div)$ of 
\begin{equation}
    \left.
    \begin{aligned}
        \min_{\bq \in L^2(\Omega;\R^d)} \quad & \tfrac{1}{2} \|\bq - \bq_h\|_{L^2(\Omega;\R^d)}^2\\
        \text{s.t.} \quad & - \div \bq = f,
    \end{aligned}
    \quad \right\}
\end{equation}
which is uniquely characterized by the existence of $w\in H^1_0(\Omega)$ such that 
\begin{equation*}
    \hat \bq_h - \bq_h + \nabla w = 0, \quad - \div \hat \bq_h = f.
\end{equation*}
Let us define $\Phi \in H^1_0(\Omega)$ by 
\begin{equation*}
    - \laplace \Phi = f \quad \text{in } H^{-1}(\Omega),
\end{equation*}
as well as the $L^2$-projection of $\bq_h$ on $\nabla H^1_0(\Omega)$, denoted by $\nabla \Phi^h$
with $\Phi^h \in H^1_0(\Omega)$. Then, we obtain
\begin{equation}
\begin{aligned}
    \| \hat \bq_h  - \bq_h\|_{L^2(\Omega\;\R^d)} 
    &= \|\nabla w\|_{L^2(\Omega\;\R^d)} \\
    &= \sup_{\substack{v\in H^1_0(\Omega)\\ \|\nabla v\|_{L^2(\Omega;\R^d)} \leq 1}}
    \int_\Omega \nabla (\Phi - \Phi^h) \cdot \nabla v\, dx\\
    &= \|\nabla \Phi - \nabla \Phi^h\|_{L^2(\Omega;\R^d)}.
\end{aligned}
\end{equation}
Since $\widehat V_h$ is a closed subspace of $H^1_0(\Omega)$, we may 
decompose $\Phi^h =  \Phi^h_0 + \Phi^h_\perp \in  \widehat V_h \oplus \widehat V_h^\perp$, 
where the orthogonal complement is taken w.r.t.\ the $H^1_0$-scalar product. 
Due to $- \div_h \bq_h = f$, we find for $\Phi^h_0$ 
\begin{equation}\label{eq:Phi0eq}
    \int_\Omega \nabla \Phi^h_0 \cdot \nabla v_h \, dx 
    = \int_\Omega \bq_h \cdot \nabla v_h \, dx = \dual{f}{v_h} 
    \quad \forall\, v_h \in \widehat{V}_h
\end{equation}
and consequently, by the best approximation property of the finite element solution,
\begin{equation}\label{eq:Phi0conv}
    \|\Phi_0^h - \Phi\|_{H^1_0(\Omega)} \to 0 \quad \text{as } h\searrow 0,
\end{equation}
follows, provided that $\bigcup_{h>0}\widehat{V}_h$ is dense in $H^1_0(\Omega)$.
Therefore, if the sequence $\{\bq_h\}_{h>0}$ is such that 
\begin{equation}\label{eq:Phiperpnorm}
    \liminf_{h\searrow 0}\|\nabla \Phi^h_\perp\|_{L^2(\Omega;\R^d)} \geq c > 0,
\end{equation}
then \eqref{eq:Phi0conv} implies
\begin{equation}\label{eq:disthatqq}
\begin{aligned}
     & \liminf_{h\searrow 0} \| \hat \bq_h  - \bq_h\|_{L^2(\Omega\;\R^d)} \\
     & \qquad = \liminf_{h\searrow 0}  \|\nabla \Phi - \nabla \Phi^h\|_{L^2(\Omega;\R^d)} \\
     & \qquad \geq \liminf_{h\searrow 0}  \Big(\|\nabla \Phi^h_\perp\|_{L^2(\Omega;\R^d)} 
     - \|\nabla \Phi - \nabla \Phi^h_0\|_{L^2(\Omega;\R^d)}\Big) \geq c
\end{aligned}
\end{equation}
and hence, \eqref{eq:zstrong} cannot hold in this case.

Let us give a simple one-dimensional example indicating that \eqref{eq:Phiperpnorm} may well happen
for a sequence converging in data. For this purpose, set $\Omega := (0,2\pi)$ and consider the
equidistant triangulation of $\Omega$ given by 
\begin{equation*}
    \TT_k := \{(0, h_k), (h_k, 2h_k), ..., ((2k-1)h_k, 2\pi)\}
\end{equation*}
with $h_k := (2 k)^{-1} \,2\pi$, $k\in \N$.
We employ the standard finite element space from \eqref{eq:classicFE} and denote the nodal 
basis of $\widehat{V}_k := \widehat{V}_{h_k}$ by $\varphi^k_i$, 
i.e., $\varphi^k_i(j \,h_k) = \delta_{ij}$, $1\leq i,j \leq 2 k - 1$. 
Note that the meshes and thus the spaces $\widehat{V}_k$ are nested. Furthermore, we 
abbreviate the solution of \eqref{eq:Phi0eq} by $\Phi_0^k := \Phi_0^{h_k} \in \widehat{V}_k$ 
and set 
\begin{equation}\label{eq:defek1}
    \Psi_k(x) := \frac{1}{\sqrt{2\pi}} \sum_{i=1}^{2k} \varphi^{2k}_{2i-1}(x)
    \in H^1_0(\Omega), \quad k\in \N.
\end{equation}
Then one easily verifies that 
\begin{equation}\label{eq:Psik}
    \|\tfrac{d}{dx} \Psi_k\|_{L^2(0,2\pi)} = 1, \quad 
    \tfrac{d}{dx}\Psi_k \weakly 0\quad \text{as } k\to \infty
\end{equation}
and 
\begin{equation*}
    \int_0^{2\pi} \tfrac{d}{dx} \Psi_k(x) \, \tfrac{d}{dx} \varphi^k_i(x)\, dx = 0
    \quad \forall \, i = 1, ..., 2k-1, \; k\in \N. 
\end{equation*}
Since $\widehat{V}_k = \spanv(\varphi^k_1, ..., \varphi^k_{2k-1})$, 
the latter implies $\Psi_k \in \widehat V_k^\perp$. 
Let us choose the space from \eqref{eq:P0} for $\bq_h$, but with half mesh size, i.e., 
\begin{equation*}
    \widehat Q_k := \spanv( \chi_{(0, h_{2k})}, \chi_{(h_{2k}, 2 h_{2k})}, ..., \chi_{((4k-1)h_{2k}, 2\pi)} )
\end{equation*}
and set 
\begin{equation*}
    \bq_{k} := \tfrac{d}{dx}\Phi_0^k + \tfrac{d}{dx} \Psi_k \in \widehat{Q}_k
\end{equation*}
such that $\Phi_\perp^k = \Psi_k$.
Then, owing to \eqref{eq:Phi0eq} and \eqref{eq:Psik}, we have 
\begin{equation*}
    -\div_{h} \bq_k = f, \quad \bq_k \weakly \tfrac{d}{dx} \Phi \quad  \text{in } L^2(0,2\pi),
\end{equation*}
Together with $\bq_k \in \widehat{Q}_k$, 
this indicates that $\bq_k$ can be part of a sequence converging in data, depending on the structure 
of $\DD_k$. Indeed, if, for instance, there is an $a\in \R$ such that 
$(a, 1/\sqrt{2\pi}), (-a, -1/\sqrt{2\pi}) \in \DD_k^\loc$ for all $k\in \N$, then 
one could choose $U_{k} := \widehat{V}_{2k}$, $u_k := a \sum_{i=1}^{2k} \varphi^{2k}_{2i-1}$,
and set $y_k = (\br_k, \bw_k) = (\frac{d}{dx} \Psi_k, \frac{d}{dx} u_k) \in \DD_k$. 
Then $y_k \weakly 0$ and 
\begin{equation*}
    z_k - y_k = (\bq_k - \br_k, \nabla u_k - \bw_k) = (\tfrac{d}{dx}\Phi_0^k, 0)
    \to (\tfrac{d}{dx}\Phi, 0) \quad \text{in } Z,
\end{equation*} 
i.e., $((0,0),(\tfrac{d}{dx}\Phi, 0)) = \deltalim_{k\to \infty} (y_k, z_k)$.
However, due to \eqref{eq:disthatqq} and $\Phi_\perp^k = \Psi_k$, there holds
\begin{equation}\label{eq:bqk2}
    \liminf_{k\to \infty} \|\hat\bq_k - \bq_k\|_{L^2(0,2\pi)} 
    \geq \lim_{k\to\infty} \|\tfrac{d}{dx} \Psi_k\|_{L^2(0,2\pi)} = 1
\end{equation}
such that \eqref{eq:zstrong} does not hold in this example.

Altogether we have seen that it is in general not possible to construct a sequence 
$\{\hat z_h\}_{h>0} \subset \EE$ such that \eqref{eq:zstrong} holds, if a finite element space 
$\widehat{Q}_h$ is used that is not $H(\div)$-conforming. In contrast to this, 
Lemma~\ref{lem:distEE} shows that this is well possible in case of $H(\div)$-conforming finite elements.
Nonetheless, this is no proof that no sequences $\{(\hat y_h, \hat z_h)\}_{h>0} \subset \DD \times \EE$ 
exist such that \eqref{eq:dataconvdiff} holds also in case of nonconforming finite element spaces. 
Maybe, it is possible to construct a sequence $\hat z_h$ by adjusting the sequence $\hat y_h$, but, so far, 
we have no idea how to do this and this issue gives rise to future research.


\section{Algorithms}\label{sec:algo}

The last section is devoted to optimization algorithms for the numerical solution of the discretized data-driven problem \eqref{eq:Pk2}.
In the literature, a projection based fixed-point method is frequently used to solve \eqref{eq:Pk2}.
We will advance this method by introducing a step size
and compare the proximal gradient method arising in this way with two variants of the Douglas-Rachford algorithm applied to \eqref{eq:Pk2}. 
Besides these first-order methods, we also employ the equivalence of \eqref{eq:Pk2} to a quadratic semi-assignment problem and 
develop a heuristic based on this reformulation. 
The algorithms are tested by means of two examples, a linear and a non-linear material model.

\subsection{Fixed-point methods based on projections}\label{sec:prox}

We first consider a class of algorithms that is based on the two $L^2$-projections $\pi_{\EE_k} : Z \to \EE_k$ and $\pi_{\DD_k} : Z \to \DD_k$ 
with $\EE_{k} \coloneqq \EE_{h_k}$ as in \eqref{eq:defEEh} and $\DD_{k}$ as in Proposition \ref{prop:DDconv}, respectively. 
As the proof of Proposition~\ref{prop:Pkexists} shows, $\pi_{\EE_k}$ is simply given by the affine operator $G_h$ defined in \eqref{eq:defGh}, i.e., 
given $y = (\br,\bw) \in Z$, we have $\pi_{\EE_k}(\br,\bw) = (\bq_{h_k}, \nabla u_{h_k})$, 
where $(\bq_{h_k}, u_{h_k}) \in Q_{h_k} \times U_{h_k}$ is the unique solution to the saddle point system \eqref{eq:KKTh}.
To compute the projection $\pi_{\DD_k}$, we first notice that we can project an arbitrary function $z \in Z$ onto $\DD_k$ 
by first projecting $z$ onto the space of piecewise constant functions and then projecting the obtained function onto $\DD_k$. 
To see this, define $\hat z := \frac{1}{|T|} \int_T z \, dx \,\chi_T$ for a given function $z\in Z$. Then the above assertion follows from
\begin{equation}\label{eq:piDk}
\begin{aligned}
	\underset{y \in \DD_k}{\arg \min} \, \| y - z \|_Z^2 & = \underset{y \in \DD_k}{\arg \min} \, \|y\|_Z^2 - 2 (y,z) + \| z \|_Z^2 \\
	& = \underset{y \in \DD_k}{\arg \min} \, \| y \|_Z^2 - 2 (y,\hat{z}) + \| \hat{z} \|_Z^2 
	= \underset{y \in \DD_k}{\arg \min} \, \| y - \hat{z} \|_Z^2.
\end{aligned}
\end{equation}
Moreover, the structure of $\DD_k$ yields that the projection can be computed elementwise, 
for example by using a $k$-nearest neighbor search \cite{Friedman1977AlgBestMatches}.
It is to be noted that $\pi_{\DD_k}$ is a set-valued map, since the projection on $\DD_k$ is clearly non-unique in 
general due to the non-convexity of $\DD_k$. In the numerical realization of the algorithms introduced below, we pick an
arbitrary $y\in \DD_k$ attaining the minimum in \eqref{eq:piDk}.

In \cite{KirchdoerferOrtiz2016}, a simple projection algorithm was introduced to solve the data driven problem.
One iteration step consists of two projections and reads
\begin{align*}
	y_{n+1} = \pi_{\DD_k} \big(\pi_{\EE_k}  (y_n)\big) \tag{PG} \label{eq:P}
\end{align*}
for $n \in \N$ and an arbitrary initial point $y_0 \in Z$. The algorithm terminates if $y_{n+1} = y_n$ for some $n \in \N$.
Note that this algorithm is a special case of the proximal gradient method or more specifically of the projected gradient method \cite{Beck2017,CV2020,NatemeyerWachsmuth2021} with step size constant equal to one. 
To illustrate this, let us shortly recall the concept of the proximal gradient method. 
Given to mappings $F, G: Z \to \R\cup\{\infty\}$, where $F$ is smooth, 
while $G$ may be not, the proximal gradient iteration for the minimization of $F+G$ reads
\begin{equation}\label{eq:proxit}
    y_{n+1} = \prox_{\gamma G}\big( y_n - \gamma\,\nabla F(y_n) \big),
\end{equation}
where $\prox_{\gamma G}(y) \in \argmin_{\eta \in Z} \tfrac{1}{2} \|\eta  - y\|_{Z}^2 + \gamma\, G(\eta)$ denotes the proximal map and
$\gamma > 0$ is a step size.
In order to apply the proximal gradient method to the data driven problem, let us rewrite \eqref{eq:Pk2} as
\begin{equation}\label{eq:Pkreform}
    \eqref{eq:Pk2}
    \quad \Longleftrightarrow \quad
    \min_{y\in Z} \; \frac{1}{2} \| \pi_{\EE_k}(y) - y \|_{Z}^2 + I_{\DD_k}(y),
\end{equation}
where $I_{\DD_k}$ denotes the indicator functional of $\DD_k$.
If we now set $F(y) := \frac{1}{2} \| \pi_{\EE_k}(y) - y \|_{Z}^2$ and $G := I_{\DD_k}$, then \eqref{eq:proxit} becomes
\begin{equation}\tag{PS} \label{eq:PS}
	y_{n+1} = \pi_{\DD_k}\big(y_n - \gamma (y_n - \pi_{\EE_k} (y_n) )\big) .
\end{equation}
If $F$ and $G$ were proper, convex, and lower semi-continuous, then the classical convergence results for proximal gradient methods
apply provided that $0 < \gamma_{\min} \leq \gamma < 2L^{-1}$, where $L$ is the Lipschitz constant of $\nabla F$, 
cf.\ e.g.\ \cite[Theorems 9.6 and 10.2]{CV2020}.
In our case, $\nabla F(y) = y - \pi_{\EE_k}(y)$ has the Lipschitz constant $L=1$ and, accordingly, we have chosen $\gamma \in [1,2)$
in our computations.
We emphasize that $G = I_{\DD_k}$ is clearly not convex due to the non-convexity of $\DD_k$ and therefore, 
convergence of the iteration in \eqref{eq:PS} to global minimizers cannot be guaranteed. 
A convergence analysis of the proximal gradient method for non-convex problems in function space similar to \eqref{eq:Pk2} 
is presented in \cite{Wachsmuth2019}. However, due to the lack of convexity, the convergence results are rather limited 
and one only obtains that weak accumulation points of the sequence of iterates satisfy a rather weak stationarity concept termed L-stationarity, 
provided that $\nabla F$ is completely continuous.
One can therefore not expect the iterates produced by the scheme in \eqref{eq:PS} to converge to 
a minimizer (neither global nor local) of \eqref{eq:Pk2}. 
Nonetheless, the introduction of the step size $\gamma$ may improve the quality of the numerical results. 
In our numerical tests, it has turned out that it is favorable to start with $\gamma = 1.4$ 
and to reduce $\gamma$ whenever the algorithm circles between two iterates, that is $y_{n+2} = y_n$ for some $n \in \N$, 
see Table~\ref{tab:gamma} below.
The choice $\gamma < 1$ accelerates the convergence of the algorithm to a fixed point 
with substantially larger objective value and should therefore be avoided.

We compare the performance of the proximal gradient method (with step size $\gamma = 1$ and varying step size) 
with the Douglas-Rachford algorithm for the computation of elements in the intersection of two sets $A, B \subset Z$,
cf., e.g., \cite{ACT20}. This algorithm is also initialized with an arbitrary point $y_0 \in Z$ and the iterates are computed by the formula
\begin{align*}
	y_{n+1} = T_{A,B}(y_n)\quad \text{with}\quad 
	T_{A,B} \coloneqq \frac{\id + R_B \circ R_A}{2}, 
\end{align*}
where the reflections $R_A$ and $R_B$ are defined by $R_A := 2 \pi_A - \id$ and $R_B := 2 \pi_B - \id$, respectively.
In the case of the data-driven problem, 
we have the two alternatives $A \coloneqq \DD_k$ and $B \coloneqq \EE_k$ or $A \coloneqq \EE_k$ and $B \coloneqq \DD_k$, that is
\begin{align*}
	y_{n+1} = T_{\EE_k,\DD_k}(y_n) \tag{DR1} \label{eq:DR1}
\end{align*}
or, respectively,
\begin{align*}
	y_{n+1} = T_{\DD_k,\EE_k}(y_n). \tag{DR2} \label{eq:DR2}
\end{align*}
Since in general $\EE_k \cap \DD_k = \emptyset$, we can neither expect the existence of 
a fixed point of $T_{\EE_k,\DD_k}$ nor $T_{\DD_k,\EE_k}$, cf.\ \cite[Proposition 9]{ACT20}. 
Consequently, the iterations in \eqref{eq:DR1} and \eqref{eq:DR2}, respectively, will in general not converge, when applied to 
the data-driven problem. Though convergence of the iterations in \eqref{eq:P} and \eqref{eq:PS} can be expected neither, 
we observed their convergence to a fixed point in our numerical computations. In contrast to this, the Douglas-Rachford iterations 
did frequently not converge to a fixed point and therefore, we let the algorithm terminate, 
if it does not achieve an improvement of the objective value for a fixed number of iterations.
Note that the iterates of both variants of the Douglas-Rachford algorithm do not necessarily fulfill $y_{n+1} \in \DD_k$ 
and hence, in order to evaluate the objective of \eqref{eq:Pk2} in the form \eqref{eq:Pkreform}, we 
additionally project the current iterate onto $\DD_k$ after each iteration to evaluate the objective value.


\subsection{Quadratic assignment and local search}\label{sec:qsap}

In order to design an alternative strategy for the solution of \eqref{eq:Pk2}, 
we rewrite the discretized data-driven problem as a \emph{quadratic semi-assignment problem}, 
where we assign the measured data to the elements of the finite element grid 
such that the distance to the corresponding projection onto the equilibrium set becomes minimal. 
For this purpose, let $\TT_{h_k} = \{T_1, \dots , T_{l}\}$, $l = l(h_k) \in \N$, be the considered triangulation of $\Omega$ 
and let $\DD_k$ be defined as in \eqref{eq:DDexk} with $\DD_{k}^{\loc} \coloneqq \{ \hat{y}_1 , \dots , \hat{y}_m \}$, $m\in \N$. 
Then \eqref{eq:Pk2} can be rewritten as
\begin{equation}\tag{QSAP}\label{eq:qsap}
    \eqref{eq:Pk2} 
    \quad \Longleftrightarrow \quad
    \left\{\quad
    \begin{aligned}
    	\min \quad & \frac{1}{2} \| \pi_{\EE_k}(y) - y \|_Z^2 \\
	    \mathrm{s. t.} \quad & y |_{T_i}  = \sum_{j=1}^{m} x_{i,j} \hat{y}_j \quad \forall \, i = 1, \dots, n \\
	    & \sum_{j=1}^{m}x_{i,j} = 1 \quad \forall \, i = 1,...,l \\
	    & x_{i,j} \in \{0,1\} \quad \forall \, i = 1,...,l, \quad j = 1,...,m
    \end{aligned}
    \quad\right\}
\end{equation}
with $x = (x_{1,1}, \dots, x_{1,m}, x_{2,1}, \dots, x_{2,m}, \dots, x_{l,1}, \dots, x_{n,m})^T \in \R^{l m}$, or equivalently
\begin{equation*}
    \eqref{eq:qsap}    
    \quad \Longleftrightarrow \quad
    \left\{\quad
    \begin{aligned}
    	\min \quad & x^T A x + b^Tx + c \\
	    \mathrm{s. t.} \quad & \sum_{j=1}^{m}x_{i,j} = 1 \quad \forall \, i = 1,...,l \\
	    & x_{i,j} \in \{0,1\} \quad \forall \, i = 1,...,l, \quad j = 1,...,m
    \end{aligned}
    \right.
\end{equation*}
with a symmetric and positive semidefinite matrix $A \in \R^{l m \times l m}$, $b \in \R^{l m}$, 
and $c \in \R$ arising from the affine solution operator to the saddle point system \eqref{eq:KKTh}, 
the mass matrices for the scalar products in $Z$, and a matrix containing the elements of $\DD_k^{\loc}$.
Note that it is allowed to assign the same measuring point to more than one element and that the objective function is quadratic. 

\begin{remark}\label{rem:qsap}
    Since quadratic semi-assignment problems are NP-hard \cite{Sahni1976Pcompleteapprox}, 
    the reformulation as \eqref{eq:qsap} shows that it is in general not possible to solve \eqref{eq:Pk2}
    efficiently (provided that $NP \neq P$). 
    This makes it practically impossible to solve the discretized data-driven problem exactly, 
    when we are faced with a big amount of measured data and a fine finite element grid.
\end{remark}

In view of the above remark, we resort to a heuristic for the solution of \eqref{eq:qsap}.
Such a heuristic method is the local search, that is we start with an arbitrary initial assignment 
and change it on only one element of the triangulation while the assignment to the remaining elements is fixed. 
If the change leads to an improvement, the current solution is updated accordingly
and one moves on to the next element in the triangulation, where the same procedure is applied.
To test every point in local data set $\DD_k^\loc$ for the respective element of the triangulation rapidly becomes 
too costly, even for moderate finite element meshes, since one needs to evaluate the objective in \eqref{eq:qsap} every time, 
which in turn requires the solution of the saddle point problem \eqref{eq:KKTh}.
In order to reduce the effort, one can restrict to the $K$ nearest neighbors of the data point
which is currently assigned to the considered element. 
But still, for reasonable finite element discretizations, the effort of the method easily becomes too high. 
A possible remedy is to employ a reduced model-order approach such as proper orthogonal decomposition (POD) 
for the saddle point problem in \eqref{eq:KKTh}, cf.\ e.g.\ \cite{Kahlbacher2007GalerkinPOD,Ullmann2016PODadaptive} 
and the references therein.
Since the right hand side only changes in one single element from one iteration of the local search to the next, 
only a moderate number of snapshots are needed to achieve a sufficient accuracy of the POD basis.
The overall method is sketched in Algorithm~\ref{alg:local_search_POD}.
\begin{algorithm}[h!]
	\caption{Local Search with POD} \label{alg:local_search_POD}
	\begin{algorithmic}[1]
		\State Choose triangulation $\TT$ of $\Omega$
		\State Choose $\varepsilon_1, \varepsilon_2, \varepsilon_3 > 0$ and $K \in \N$
		\State Compute initial POD-basis
		\State Choose initial solution $y$ with objective value $v :=  \frac{1}{2} \| \pi_{\EE_k}(y) - y \|_Z^2 $
		\State Set $\bar{v} \coloneqq v+1$
		\While{$v \neq \bar{v}$}
		\State Update: $\bar{v} \gets v$
		\ForAll{$\tilde{T} \in \TT$}
		\State Find $K$ nearest neighbors $y_1, \dots,y_K \in \DD_k^{\loc}$ of $y\vert_{\tilde{T}}$
		\For{$j=1,...,K$}
		\State Define $\tilde{y}$ on each $T \in \TT$ by
		\begin{align*}
			\tilde{y}\vert_{T} := \begin{cases}
				y\vert_{T} & \text{if } T \neq \tilde{T} \\
				y_j, & \text{if } T = \tilde{T}
			\end{cases}
		\end{align*}
		\State Solve POD-model to approximate $\pi_{\EE_k}(\tilde{y})$ by $z_a$
		\State Set $v_a \coloneqq \frac{1}{2} \| \tilde{y} - z_a \|^2_Z$
		\If{$\frac{\vert v_a - v \vert }{\vert v \vert}> \varepsilon_1$}
		\State Compute exact solution $z_e \coloneqq \pi_{\EE_k} (\tilde{y})$ and set
		\begin{align*}
			v_e \coloneqq \frac{1}{2} \| \tilde{y} - z_e \|^2_Z
		\end{align*}
		\If{$ \frac{\vert v_a - v_e \vert}{\vert v_e \vert} > \varepsilon_2$}
		\State Add $z_e$ as snapshot to compute new POD-basis
		\EndIf
		\If{$v_e < v$}
		\State Update: $y \gets \tilde{y}$, $v \gets v_e$
		\State Add $z_e$ as snapshot to compute new POD-basis
		\EndIf
		\EndIf
		\If{$v_a < (1+\varepsilon_3) v$}
		\State Compute exact solution $z_e \coloneqq \pi_{\EE_k} (\tilde{y})$ and set
		\begin{align*}
			v_e \coloneqq \frac{1}{2} \| \tilde{y} - z_e \|^2
		\end{align*}
		\If{$v_e < v$}
		\State Update: $y \gets \tilde{y}$, $v \gets v_e$
		\State Add $z_e$ as snapshot to compute new POD-basis
		\EndIf
		\EndIf
		\EndFor
		\EndFor
		\EndWhile
	\end{algorithmic}
\end{algorithm}
Note that we only accept an improvement on the objective if it is computed with the exact finite element model.
A crucial issue for the performance of Algorithm~\ref{alg:local_search_POD} is the choice of the initial solution.
One possibility is to initialize the local search by a solution obtained with one of the projection algorithms from Section~\ref{sec:prox}.
This allows to escape from fixed points that are not optimal or just local minimizers.
A second option is to solve \eqref{eq:qsap} exactly with a small selection of measured data points on a very coarse finite element mesh. 
Note that, since \eqref{eq:qsap} is NP-hard, the problem size has to be fairly small to allow for an exact algorithm. 
We employ the algorithm from \cite{Buchheim2018Quadratic} on a coarse grid with only a few measured data points
and project its solution to the finer mesh to initialize Algorithm~\ref{alg:local_search_POD}.


\section{Numerical experiments}\label{sec:numerics}

In our numerical tests, we consider the domain $\Omega = (0,1)^2$ such that Assumption~\ref{assu:domreg}\ref{it:domreg} 
is fulfilled. For the finite element discretization,
we use an exact triangulation of Friedrich-Keller type and choose the following function spaces
\begin{equation*}
	Q_h = \RR \TT_0(\TT_h), \quad
	V_h = \PP_0(\TT_h),
\end{equation*}
and
\begin{equation}\label{eq:Uh}
	U_h = \{ u \in C(\overline{\Omega}) \cap H^1_0(\Omega) \colon u|_T \in \PP_1(T) \; \forall \, T \in \TT_h \}.
\end{equation}
As seen in Proposition~\ref{prop:RT}, these finite element spaces satisfy Assumption~\ref{assu:discret}.
 
We test all previously introduced algorithms with the following parameters and settings: 
For the projection-based fixed point algorithms \eqref{eq:P}, \eqref{eq:PS}, \eqref{eq:DR1}, and \eqref{eq:DR2}, we use 
$y_0 = 0$ as starting point.
If not stated otherwise, we choose the initial step size $\gamma = 1.4$ for \eqref{eq:PS} 
and reduced $\gamma$ by the factor $0.9$ whenever the algorithm circled between two iterates. 
This choice is a compromise of accuracy and running time motivated by the observations in Table~\ref{tab:gamma} below.
As mentioned above, the Douglas-Rachford methods in \eqref{eq:DR1} and \eqref{eq:DR2} do frequently not converge to a fixed point 
and hence, we let the algorithms terminate after $50$ iterations without an improvement of the objective value. 

We also test two variants of Algorithm~\ref{alg:local_search_POD}. 
In the first one, we initialized the algorithm with the best out of ten solutions of the projection algorithm \eqref{eq:PS} 
with random starting points with components in $[-4,4]^4$ and used the remaining for the computation of the initial POD basis. 
In the second one, we initialized the algorithm with the exact solution on a coarse grid consisting of eight triangles 
with mesh size $\frac{1}{\sqrt{2}}$ and $16$ selected data points of our measured data set. 
As indicated above, we computed this solution by means of the exact algorithm from \cite{Buchheim2018Quadratic}. 
In both cases, the parameters of Algorithm~\ref{alg:local_search_POD} 
are set to $\varepsilon_1 = 0.002$, $\varepsilon_2 = 0.001$, $\varepsilon_3 = 0.01$, and 
$K= 20$.
These choices are motivated by numerical experiments based on the linear material law from Section~\ref{sec:fourier}.

\subsection{Fourier's law}\label{sec:fourier}

For the first numerical tests, we considered Fourier's law where the material law is linear. 
To be more precise, we set $\kappa : \R^2 \to \R^2$, $\kappa(w) \coloneqq w$ such that \eqref{SDP} simply becomes Poisson's equation, 
i.e., $- \Delta u = f \text{ in } \Omega$. 
Moreover, the right hand side is set to $f(x) = 2 \pi^2 \sin(\pi x_1) \sin(\pi x_2)$, which clearly fulfills the 
regularity condition in Assumption~\ref{assu:domreg}.
Accordingly, the exact solution reads $u(x) = \sin(\pi x_1) \sin(\pi x_2)$.
For the local material data set, we sampled $11025$ evenly distributed data points $(r,w)$ 
fulfilling $r = \kappa(w)$ within $[-4,4]^4$.

To compare the algorithms, we list the returned objective values and the needed iterations and wall clock times
in Table~\ref{tab:fourier}. Moreover, we compute the relative distance of the exact solution to the outcome of the respective 
algorithm measured in the $L^2$-norm and in the $H^1_0$-seminorm.
For this purpose, denote the result of the respective algorithm by $\bar y$ and set 
$(\bar\bq_{h_k}, \nabla \bar u_{h_k}) := \pi_{\EE_k}(\bar y)$. We then compute 
\begin{equation}\label{eq:errors}
    \err_{L^2} := \frac{\| u - \bar u_{h_k} \|_{L^2(\Omega)}}{\| u \|_{L^2(\Omega)}} 
    \quad \text{and} \quad 
    \err_{H^1_0} := \frac{\| \nabla u - \nabla \bar u_{h_k} \|_{L^2(\Omega;\R^d)}}{\| \nabla u \|_{L^2(\Omega;\R^d)}}.
\end{equation}
\begin{table}[h!]
	\resizebox{\textwidth}{!}{%
		\begin{tabular}{|c|c|c|c|c|c|}
			\hline
			Algorithm & Objective value & $\err_{L^2}$ & $\err_{H^1_0}$ & Iterations & Time \\
			\hline
			Projection \eqref{eq:P} & 1.281e-02 & 1.731e-02 & 7.973e-02 & 10 & 0.477 \\
			\hline
			Projection with stepsize \eqref{eq:PS} & 1.248e-02 & 8.869e-03 & 7.895e-02 & 17 & 0.554  \\
			\hline
			Douglas-Rachford \eqref{eq:DR1} & 1.305e-02 & 7.878e-03 & 7.891e-02 & 99 & 9.137 \\
			\hline
			Douglas-Rachford \eqref{eq:DR2} & 1.299e-02 & 8.292e-03 & 7.870e-02 & 52 & 3.846  \\
			\hline
			Algorithm \ref{alg:local_search_POD} with initialization by \eqref{eq:PS} & 1.247e-02 & 8.800e-03 & 7.886e-02 & 4 & 181.242 \\
			\hline
			Algorithm \ref{alg:local_search_POD} with exact initialization & 1.248e-02 & 8.800e-03 & 7.887e-02 & 21 & 4351.329 \\
			\hline
	\end{tabular}}
	\caption{Fourier's law with $|\DD_k^\loc| = 11025$ and $h_k := \sqrt2/20$.} \label{tab:fourier}
\end{table}
As one can see, the returned objective values of the algorithms and the distances to the exact solution are all similar to each other. 
It is the computing time that makes the most significant difference. Since the local search in Algorithm~\ref{alg:local_search_POD} 
considers each element of the triangulation separately, there is much computational effort for only little improvements.
However, the algorithm involves several parameters, beside the tolerances $\varepsilon_1$, $\varepsilon_2$, and $\varepsilon_3$ 
for the update of POD-basis and the number $K$ of nearest neighbors, in addition the size of the POD-basis and 
the choice of the initial point. Moreover, the exact algorithm from \cite{Buchheim2018Quadratic} can recursively be applied 
within the local search by fixing the assignment on large parts of the domain, while one applies the exact algorithm on a small 
amount of elements with only a few selected measured data points.
A comprehensive numerical study of Algorithm~\ref{alg:local_search_POD} including the adjustment of all these parameters
however goes beyond the focus of this work and gives rise to future research.
But, in view of substantial differences w.r.t.\ the computing time, it is at least doubtful, 
if the local search could ever be competitive compared to the projection-based methods.
For this reason, we left out Algorithm~\ref{alg:local_search_POD} for the following numerical study of a non-linear material law.


\subsection{A non-linear material law}

In our second numerical test, we considered the non-linear material law $\kappa : \R^2 \to \R^2$ defined by 
\begin{equation}\label{eq:kappa}
    \kappa (\bw) \coloneqq (2 \tan^{-1}(\|\bw\|^2-1)+0.5 \pi+2) \bw.
\end{equation}
Note that $\kappa$ is strongly monotone and globally Lipschitz so that, according to the Browder and Minty theorem,
there is a unique solution in $H^1_0(\Omega)$ of \eqref{SDP} for every right hand side in $H^{-1}(\Omega)$.
This time the right hand side is set to $f(x) = - \div (\kappa(\nabla u(x
)))$ with $u(x) = \sin(\pi x_1) \sin(\pi x_2)$, so that the exact solution of \eqref{SDP}
is given by $u$.

For the numerical computations, 
we randomly sampled $|\DD_k^\loc| = 1000$, $10000$, $50000$, $100000$ uniformly distributed data-points $(\br, \bw)$ 
fulfilling $\br = \kappa(\bw)$ with $\bw \in [-4,4]^2$. Additionally, a uniformly distributed noise $\bs_i \in [-\bar{s},\bar{s}]^4$, 
$i = 1,...,|\DD_k^\loc|$,
with $\bar{s} = 0, 0.1$, $0.01$, $0.001$ is randomly added to each data point. For the finite element discretization, 
we considered the mesh sizes $h_1 = \frac{\sqrt{2}}{50}$, $h_2 = \frac{\sqrt{2}}{100}$, and $h_3 = \frac{\sqrt{2}}{200}$. 

First we investigated the choice of the initial step size in \eqref{eq:PS}. Recall that the step size is reduced, whenever 
the algorithm circles between two iterates. The results are shown in Table~\ref{tab:gamma} and 
illustrate that the initial choice $\gamma = 1.4$ is favorable in terms of the value of the objective, but let the 
computing time increase in comparison to the original projection algorithm \ref{eq:P}, which corresponds to the case $\gamma = 1$.
The course of the objective values for $\gamma = 1.4$ produced by \eqref{eq:PS} is illustrated in Figure~\ref{fig:gamma}.
\begin{minipage}{0.5\textwidth}
	
\end{minipage}
\begin{table}[h!]
	\begin{tabular}{|c|c|c|c|c|c|c|}
		\hline
		\multicolumn{3}{|c|}{$|\DD_k^\loc| = 11025$, $\bar{s} = 0$} && \multicolumn{3}{c|}{$|\DD_k^\loc| = 5000$, $\bar{s} = 0.1$} \\
		\hline
		Init. $\gamma$ & Iterations & Objective value && Init. $\gamma$ & Iterations & Objective value \\
		\hline
		0.8 & 14 & 3.907e-03 && 0.8 & 12 & 9.839e-02 \\
		0.9 & 12 & 3.289e-03 && 0.9 & 12 & 8.913e-02 \\
		1.0 & 11 & 2.899e-03 && 1.0 & 11 & 7.841e-02 \\
		1.1 & 13 & 2.700e-03 && 1.1 & 12 & 7.420e-02 \\
		1.2 & 13 & 2.606e-03 && 1.2 & 23 & 6.414e-02 \\
		1.3 & 18 & 2.573e-03 && 1.3 & 47 & 6.155e-02 \\
		1.4 & 36 & 2.558e-03 && 1.4 & 93 & 6.034e-02 \\
		1.5 & 40 & 2.564e-03 && 1.5 & 112 & 6.208e-02 \\
		1.6 & 49 & 2.566e-03 && 1.6 & 137 & 6.504e-02 \\
		1.7 & 57 & 2.562e-03 && 1.7 & 168 & 6.816e-02 \\
		1.8 & 58 & 2.567e-03 && 1.8 & 218 & 7.102e-02 \\
		1.9 & 85 & 2.564e-03 && 1.9 & 317 & 7.054e-02 \\
		2.0 & 88 & 2.563e-03 && 2.0 & 651 & 6.944e-02 \\
		2.1 & 560 & 2.562e-03 && 2.1 & 763 & 8.738e-02 \\
		2.2 & 566 & 2.567e-03 && 2.2 & 1052 & 6.451e-02 \\
		\hline
	\end{tabular}
	\caption{Testing the projection algorithm with step size \eqref{eq:PS} with different initial values for $\gamma$ with $h = \sqrt{2}/50$.} \label{tab:gamma}
\end{table}

\begin{figure}[H]
	\begin{minipage}[t]{0.49\textwidth}
		\includegraphics[width=\textwidth]{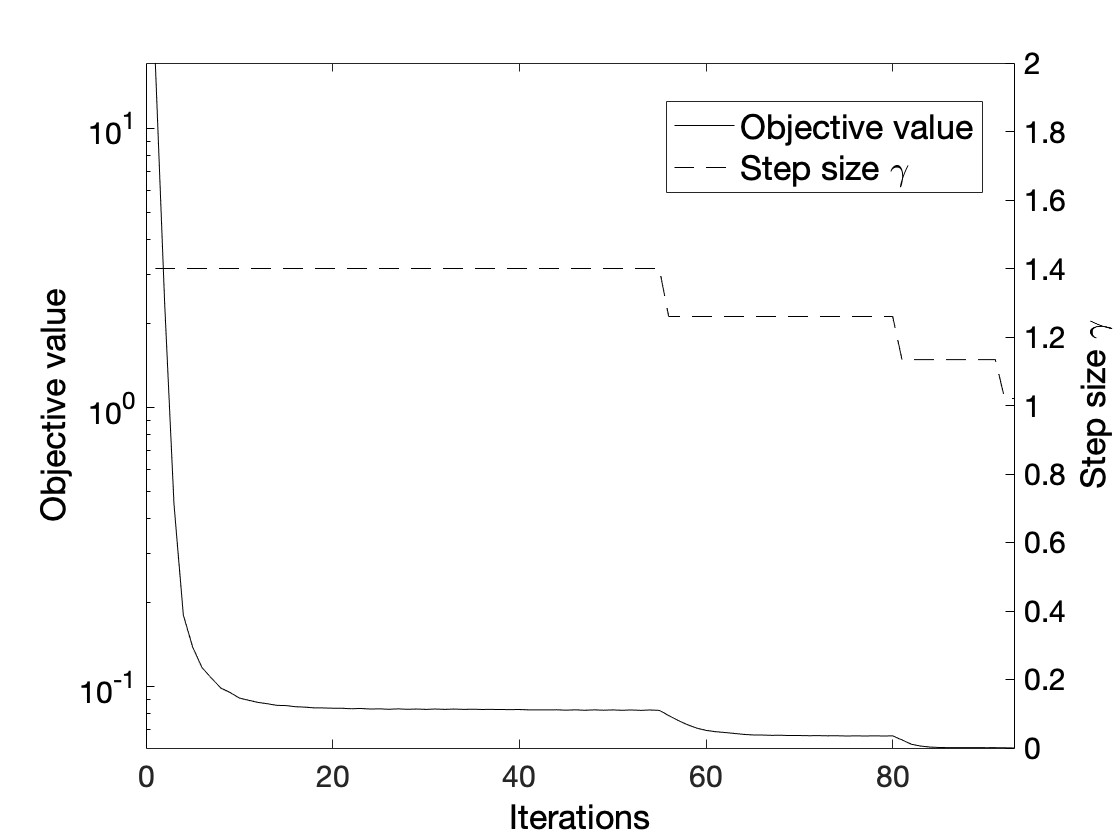}
	\end{minipage}
	\begin{minipage}[t]{0.49\textwidth}
		\includegraphics[width=\textwidth]{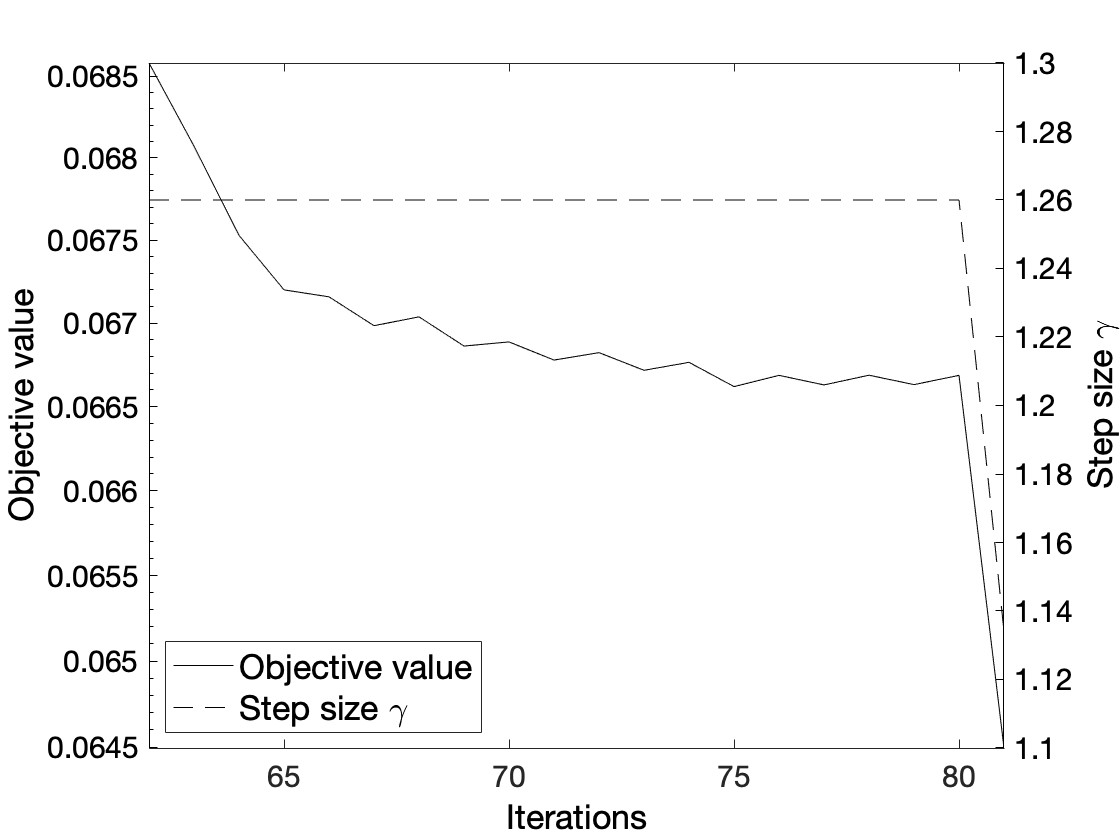}
	\end{minipage}
	\caption{Objective values produced by \eqref{eq:PS} with initial value $\gamma = 1.4$, 
	$h = \sqrt{2}/50$, $|\DD_k^\loc| = 5000$, and $\bar{s} = 0.1$.} \label{fig:gamma}
\end{figure}

The errors from \eqref{eq:errors} as well as the objective value and the computing times for this example 
for the different data sets $\DD_{k}^\loc$, various noise levels $\bar s$, and different mesh sizes are shown in Table~\ref{tab:NL}. 
In addition, the number of iterations and the computing times for the various objectives are listed.
\begin{table}[!p]
	\centering
	\rotatebox{90}{
		\begin{minipage}[c]{\textheight}
			\resizebox{\textwidth}{!}{%
				\begin{tabular}{|c|c|c|c|c|c|c|c|c|c|c|c|c|c|c|c|c|c|c|c|c|c|c|}
					\hline
					Grid & \multicolumn{2}{  c|}{Data set} & \multicolumn{4}{c|}{Objective value} & \multicolumn{4}{c|}{$\err_{L^2}$} & \multicolumn{4}{c|}{$\err_{H^1_0}$} & \multicolumn{4}{c|}{Iterations} & \multicolumn{4}{c|}{Time} \\
					\hline
					$N$ & $|\DD_k^\loc|$ & $\bar{s}$ & \eqref{eq:P} & \eqref{eq:PS} & \eqref{eq:DR1} & \eqref{eq:DR2} &\eqref{eq:P} & \eqref{eq:PS} & \eqref{eq:DR1} & \eqref{eq:DR2} & \eqref{eq:P} & \eqref{eq:PS} & \eqref{eq:DR1} & \eqref{eq:DR2} & \eqref{eq:P} & \eqref{eq:PS} & \eqref{eq:DR1} & \eqref{eq:DR2} & \eqref{eq:P} & \eqref{eq:PS} & \eqref{eq:DR1} & \eqref{eq:DR2} \\
					\hline
					50 & 5000   & 0.1   & 7.841e-02 & 6.034e-02 & 2.155e-01 & 1.499e-01 & 5.728e-03 & 2.199e-03 & 4.759e-03 & 2.353e-03 & 4.075e-02 & 3.930e-02 & 4.480e-02 & 4.215e-02 & 11 & 93 & 76 & 57 & 2.488 & 8.027 & 12.060 & 9.741 \\
					&        & 0.01  & 7.617e-02 & 5.916e-02 & 1.576e-01 & 1.377e-01 & 4.729e-03 & 2.590e-03 & 2.388e-03 & 2.395e-03 & 3.461e-02 & 3.337e-02 & 3.751e-02 & 3.648e-02 & 15 & 86 & 86 & 59 & 2.755 & 7.571 & 14.135 & 10.058 \\
					&        & 0.001 & 7.438e-02 & 5.814e-02 & 1.510e-01 & 1.313e-01 & 4.542e-03 & 2.342e-03 & 2.886e-03 & 2.164e-03 & 3.441e-02 & 3.333e-02 & 3.783e-02 & 3.620e-02 & 10 & 85 & 62 & 60 & 2.418 & 7.504 & 10.407 & 10.228 \\
					&        & 0     & 7.669e-02 & 5.857e-02 & 1.547e-01 & 1.310e-01 & 4.799e-03 & 2.337e-03 & 3.014e-03 & 1.979e-03 & 3.470e-02 & 3.331e-02 & 3.808e-02 & 3.637e-02 & 11 & 87 & 62 & 61 & 2.506 & 7.639 & 10.413 & 10.381 \\
					\hline
					50 & 10000  & 0.1   & 5.282e-02 & 4.429e-02 & 1.513e-01 & 9.142e-02 & 7.521e-03 & 3.387e-03 & 2.713e-03 & 3.493e-03 & 3.907e-02 & 3.811e-02 & 4.158e-02 & 3.924e-02 & 16 & 103 & 70 & 53 & 2.908 & 9.324 & 12.276 & 9.782 \\
					&        & 0.01  & 5.245e-02 & 4.327e-02 & 9.670e-02 & 8.442e-02 & 3.608e-03 & 2.297e-03 & 2.493e-03 & 1.884e-03 & 3.322e-02 & 3.263e-02 & 3.542e-02 & 3.420e-02 & 13 & 93 & 62 & 58 & 2.693 & 8.573 & 11.538 & 10.891 \\
					&        & 0.001 & 5.228e-02 & 4.338e-02 & 9.398e-02 & 8.446e-02 & 3.468e-03 & 2.317e-03 & 2.791e-03 & 2.186e-03 & 3.321e-02 & 3.252e-02 & 3.519e-02 & 3.441e-02 & 15 & 75 & 62 & 60 & 2.833 & 7.300 & 11.602 & 11.267 \\
					&        & 0     & 5.332e-02 & 4.429e-02 & 9.332e-02 & 8.526e-02 & 3.751e-03 & 2.277e-03 & 1.385e-03 & 2.028e-03 & 3.316e-02 & 3.256e-02 & 3.511e-02 & 3.441e-02 & 14 & 123 & 85 & 63 & 2.763 & 10.712 & 15.754 & 11.796 \\
					\hline
					50 & 50000  & 0.1   & 3.310e-02 & 2.998e-02 & 8.089e-02 & 4.559e-02 & 8.189e-03 & 2.355e-03 & 3.023e-03 & 1.492e-03 & 3.847e-02 & 3.792e-02 & 3.686e-02 & 3.639e-02 & 11 & 78 & 157 & 53 & 3.119 & 11.476 & 44.129 & 15.502 \\
					&        & 0.01  & 3.222e-02 & 2.966e-02 & 4.670e-02 & 3.830e-02 & 2.407e-03 & 1.999e-03 & 2.625e-03 & 1.948e-03 & 3.214e-02 & 3.192e-02 & 3.292e-02 & 3.220e-02 & 11 & 85 & 62 & 58 & 3.125 & 12.414 & 21.290 & 18.862 \\
					&        & 0.001 & 3.256e-02 & 2.987e-02 & 3.976e-02 & 3.804e-02 & 2.372e-03 & 2.048e-03 & 2.454e-03 & 1.635e-03 & 3.210e-02 & 3.187e-02 & 3.262e-02 & 3.230e-02 & 19 & 88 & 62 & 63 & 4.075 & 12.700 & 21.534 & 20.895 \\
					&        & 0     & 3.227e-02 & 2.982e-02 & 3.933e-02 & 3.777e-02 & 2.417e-03 & 2.118e-03 & 2.380e-03 & 1.799e-03 & 3.210e-02 & 3.186e-02 & 3.259e-02 & 3.210e-02 & 16 & 84 & 62 & 59 & 3.807 & 12.282 & 21.654 & 19.565 \\
					\hline
					50 & 100000 & 0.1   & 2.932e-02 & 2.738e-02 & 6.180e-02 & 3.777e-02 & 9.482e-03 & 1.854e-03 & 1.779e-03 & 1.671e-03 & 3.837e-02 & 3.772e-02 & 3.606e-02 & 3.578e-02 & 12 & 91 & 170 & 53 & \textbf{4.223} & \textbf{20.768} & \textbf{77.888} & 24.637 \\
					&        & 0.01  & 2.983e-02 & 2.796e-02 & 4.141e-02 & 3.263e-02 & 2.375e-03 & 2.111e-03 & 2.168e-03 & 1.884e-03 & 3.204e-02 & 3.184e-02 & 3.265e-02 & 3.191e-02 & 15 & 92 & 62 & 58 & 4.831 & 20.908 & 35.731 & 31.038 \\
					&        & 0.001 & 2.967e-02 & 2.802e-02 & 3.341e-02 & 3.185e-02 & 2.388e-03 & 2.091e-03 & 2.375e-03 & 1.234e-03 & 3.195e-02 & 3.175e-02 & 3.235e-02 & 3.186e-02 & 24 & 109 & 62 & 83 & 6.690 & 24.436 & 36.423 & 50.621 \\
					&        & 0     & 2.982e-02 & 2.794e-02 & 3.284e-02 & 3.152e-02 & 2.311e-03 & 2.058e-03 & 2.454e-03 & \textit{1.598e-03} & 3.196e-02 & \textit{3.174e-02} & 3.232e-02 & 3.180e-02 & 16 & 98 & 62 & 59 & 5.068 & 22.188 & 36.668 & 32.388 \\
					\hline
					100 & 5000   & 0.1   & \textbf{5.375e-02} & \textbf{3.622e-02} & 1.672e-01 & 1.139e-01 & 5.097e-03 & 2.123e-03 & 2.942e-03 & 2.211e-03 & 3.218e-02 & 2.966e-02 & 3.365e-02 & 3.133e-02 & 16 & 101 & 124 & 60 & 18.214 & 44.105 & 88.854 & 50.289 \\
					&        & 0.01  & 5.250e-02 & 3.598e-02 & 1.327e-01 & 1.067e-01 & 4.205e-03 & 1.835e-03 & 2.482e-03 & 1.371e-03 & 2.192e-02 & 1.910e-02 & 2.616e-02 & 2.369e-02 & 18 & 109 & 151 & 59 & 18.748 & 46.334 & 108.851 & 49.967 \\
					&        & 0.001 & 4.894e-02 & 3.511e-02 & 1.250e-01 & 1.022e-01 & 3.442e-03 & 1.504e-03 & 1.872e-03 & 1.128e-03 & 2.133e-02 & 1.900e-02 & 2.602e-02 & 2.320e-02 & 14 & 98 & 62 & 60 & 17.622 & 43.469 & 52.414 & 50.571 \\
					&        & 0     & 5.105e-02 & 3.507e-02 & 1.248e-01 & 1.051e-01 & 3.544e-03 & 1.638e-03 & 2.017e-03 & 1.223e-03 & 2.174e-02 & 1.894e-02 & 2.584e-02 & 2.356e-02 & 15 & 95 & 186 & 60 & 17.949 & 42.387 & 132.577 & 50.536 \\
					\hline
					100 & 10000  & 0.1   & 3.336e-02 & 2.404e-02 & 1.003e-01 & 6.893e-02 & 7.693e-03 & 2.599e-03 & 2.138e-03 & 2.387e-03 & 3.017e-02 & 2.796e-02 & 2.918e-02 & 2.826e-02 & 14 & 96 & 153 & 58 & 17.698 & 43.270 & 108.273 & 49.992 \\
					&        & 0.01  & 3.290e-02 & 2.324e-02 & 7.563e-02 & 6.133e-02 & 2.586e-03 & 1.365e-03 & 1.614e-03 & 1.124e-03 & 1.937e-02 & 1.793e-02 & 2.261e-02 & 2.064e-02 & 13 & 104 & 62 & 59 & 17.472 & 45.548 & 54.508 & 51.185 \\
					&        & 0.001 & 3.266e-02 & 2.346e-02 & 7.415e-02 & 6.071e-02 & 2.492e-03 & 1.275e-03 & 1.958e-03 & 1.030e-03 & 1.926e-02 & 1.783e-02 & 2.262e-02 & 2.053e-02 & 18 & 121 & 62 & 60 & 18.898 & 50.560 & 54.112 & 51.783 \\
					&        & 0     & 3.287e-02 & 2.385e-02 & 7.463e-02 & 6.186e-02 & 2.501e-03 & 1.477e-03 & 2.031e-03 & 1.221e-03 & 1.924e-02 & 1.790e-02 & 2.255e-02 & 2.057e-02 & 17 & 109 & 62 & 60 & 18.582 & 47.091 & 54.112 & 51.816 \\
					\hline
					100 & 50000  & 0.1   & 1.521e-02 & 1.228e-02 & 3.220e-02 & 2.723e-02 & 8.192e-03 & 1.810e-03 & \cellcolor[gray]{.9} 8.163e-04 & 1.543e-03 & 2.846e-02 & 2.758e-02 & 2.466e-02 & 2.517e-02 & 16 & 100 & 332 & 53 & 19.213 & 49.814 & 251.952 & 52.959 \\
					&        & 0.01  & 1.403e-02 & 1.157e-02 & 2.808e-02 & 2.008e-02 & 1.575e-03 & 1.216e-03 & 6.625e-04 & 9.816e-04 & 1.720e-02 & 1.672e-02 & 1.855e-02 & 1.728e-02 & 22 & 110 & 85 & 59 & 21.289 & 53.372 & 89.297 & 60.222 \\
					&        & 0.001 & 1.425e-02 & 1.156e-02 & 2.165e-02 & 1.971e-02 & 1.595e-03 & 1.252e-03 & 1.629e-03 & 9.382e-04 & 1.714e-02 & 1.659e-02 & 1.812e-02 & 1.711e-02 & 17 & 114 & 62 & 103 & 19.565 & 54.822 & 68.245 & 103.937 \\
					&        & 0     & 1.412e-02 & 1.157e-02 & 2.103e-02 & 1.945e-02 & 1.524e-03 & 1.259e-03 & \cellcolor[gray]{.9} 1.587e-03 & 4.632e-04 & 1.707e-02 & 1.661e-02 & 1.803e-02 & 1.708e-02 & 23 & 134 & 62 & 111 & 21.630 & 61.853 & 68.280 & 113.192 \\
					\hline
					100 & 100000 & 0.1   & 1.183e-02 & 9.950e-03 & 2.087e-02 & 1.870e-02 & \cellcolor[gray]{.9} 1.002e-02 & 1.104e-03 & 7.591e-04 & 7.054e-04 & 2.840e-02 & 2.710e-02 & 2.393e-02 & 2.360e-02 & 15 & 136 & 295 & 163 & \textbf{20.175} & 74.482 & 276.409 & \textbf{157.545} \\
					&        & 0.01  & 1.157e-02 & 9.637e-03 & 2.180e-02 & 1.416e-02 & 1.543e-03 & 1.246e-03 & 5.781e-04 & 8.339e-04 & 1.693e-02 & 1.648e-02 & 1.787e-02 & 1.669e-02 & 23 & 156 & 85 & 59 & 23.568 & 83.122 & 116.541 & 73.134 \\
					&        & 0.001 & 1.147e-02 & 9.631e-03 & 1.470e-02 & 1.335e-02 & 1.572e-03 & 1.252e-03 & 1.640e-03 & 9.232e-04 & 1.681e-02 & 1.636e-02 & 1.749e-02 & 1.657e-02 & 36 & 141 & 62 & 60 & 29.353 & 76.426 & 88.675 & 76.115 \\
					&        & 0     & 1.146e-02 & 9.607e-03 & 1.443e-02 & 1.317e-02 & \cellcolor[gray]{.9} 1.505e-03 & 1.262e-03 & 1.614e-03 & \textit{3.616e-04} & 1.680e-02 & \textit{1.637e-02} & 1.743e-02 & 1.652e-02 & 18 & 197 & 62 & 114 & 21.525 & 100.915 & 88.232 & 156.888 \\
					\hline
					200 & 5000   & 0.1   & 4.101e-02 & \textbf{2.840e-02} & \textbf{1.347e-01} & \textbf{1.006e-01} & 5.324e-03 & 2.168e-03 & 1.941e-03 & 1.975e-03 & 3.083e-02 & 2.772e-02 & 2.977e-02 & 2.851e-02 & 17 & 93 & 211 & 61 & 249.322 & 370.015 & 813.302 & 411.318 \\
					\hhline{|~|~|~|~|-|~|~|~|~|~|~|~|~|~|~|~|~|~|~|~|~|~|~|}
					&        & 0.01  & 4.000e-02 & \vline \cellcolor[gray]{1.0} 2.778e-02 \vline & 1.166e-01 & 9.346e-02 & 3.989e-03 & 1.582e-03 & 1.083e-03 & 1.073e-03 & \textbf{1.740e-0}2 & \textbf{1.347e-02} & 2.107e-02 & 1.888e-02 & 25 & 115 & 236 & 61 & 260.467 & 398.098 & 898.825 & 411.689 \\
                  \hhline{|~|~|~|~|-|~|~|~|~|~|~|~|~|~|~|~|~|~|~|~|~|~|~|}
					&        & 0.001 & 3.730e-02 & 2.679e-02 & 1.084e-01 & 8.936e-02 & 3.152e-03 & 1.284e-03 & 7.409e-04 & 1.035e-03 & 1.663e-02 & 1.316e-02 & 2.055e-02 & 1.839e-02 & 31 & 92 & 290 & 59 & 268.484 & 368.304 & 1056.849 & 406.249 \\ 
                 \hhline{|~|~|~|~|~|~|~|~|~|~|-|~|-|~|~|~|~|~|~|~|~|~|~|}
					&        & 0     & 3.893e-02 & 2.699e-02 & 1.092e-01 & 9.197e-02 & 3.340e-03 & 1.398e-03 & 7.797e-04 & \vline \cellcolor[gray]{1.0} 1.119e-03 \vline & 1.703e-02 & \vline \cellcolor[gray]{1.0} \textbf{1.321e-02} \vline & \textbf{2.102e-02} & \textbf{1.875e-02} & 19 & 100 & 305 & 61 & 253.189 & 378.503 & 1102.067 & 412.168 \\
					\hline
					200 & 10000  & 0.1   & 2.442e-02 & 1.706e-02 & 7.392e-02 & 5.868e-02 & 7.643e-03 & 2.432e-03 & 1.449e-03 & 2.013e-03 & 2.885e-02 & 2.586e-02 & 2.552e-02 & 2.506e-02 & 18 & 110 & 214 & 60 & 252.154 & 393.617 & 826.667 & 411.387 \\
					&        & 0.01  & 2.402e-02 & 1.661e-02 & 6.925e-02 & 5.209e-02 & 2.346e-03 & 1.212e-03 & 1.586e-03 & 9.581e-04 & 1.405e-02 & 1.159e-02 & 1.817e-02 & 1.509e-02 & 20 & 106 & 62 & 60 & 254.167 & 389.364 & 421.967 & 412.413 \\
					&        & 0.001 & 2.377e-02 & 1.669e-02 & 6.744e-02 & 5.215e-02 & 2.202e-03 & 1.165e-03 & 1.644e-03 & 9.331e-04 & 1.378e-02 & 1.145e-02 & 1.792e-02 & 1.492e-02 & 21 & 117 & 62 & 60 & 256.723 & 401.675 & 422.083 & 411.567 \\
					&        & 0     & 2.432e-02 & 1.695e-02 & 6.787e-02 & 5.222e-02 & 2.446e-03 & 1.213e-03 & 1.835e-03 & 9.034e-04 & 1.389e-02 & 1.147e-02 & 1.802e-02 & 1.505e-02 & 22 & 99 & 62 & 61 & 256.975 & 378.268 & 421.892 & 416.442 \\
					\hline
					200 & 50000  & 0.1   & 1.007e-02 & \cellcolor[gray]{.8} 7.268e-03 & 2.080e-02 & 1.968e-02 & 8.088e-03 & 1.794e-03 & 6.689e-04 & 7.355e-04 & 2.662e-02 & 2.547e-02 & 2.155e-02 & 2.141e-02 & 19 & 130 & 208 & 197 & 254.379 & 434.248 & 840.326 & 800.123 \\
					&        & 0.01  & 9.390e-03 & 6.664e-03 & 2.213e-02 & 1.485e-02 & 1.352e-03 & 1.055e-03 & 9.224e-04 & 7.493e-04 & 1.071e-02 & 9.705e-03 & 1.222e-02 & 1.058e-02 & 26 & 155 & 153 & 60 & 263.740 & 492.920 & 750.784 & 423.843 \\
					&        & 0.001 & 9.451e-03 & 6.711e-03 & 1.664e-02 & 1.416e-02 & 1.302e-03 & 1.079e-03 & 1.514e-03 & 4.517e-04 & 1.046e-02 & 9.488e-03 & 1.204e-02 & 1.009e-02 & 28 & 142 & 62 & 150 & 267.072 & 471.474 & 450.813 & 727.345 \\
					&        & 0     & 9.415e-03 & \cellcolor[gray]{.8} 6.645e-03 & 1.628e-02 & 1.396e-02 & 1.315e-03 & 1.065e-03 & 1.461e-03 & 1.922e-04 & 1.045e-02 & 9.486e-03 & 1.195e-02 & 1.007e-02 & 22 & 142 & 62 & 177 & 258.197 & 447.639 & 451.086 & 834.282 \\
					\hline
					200 & 100000 & 0.1   & 7.182e-03 & 5.326e-03 & 1.227e-02 & 1.153e-02 & \textbf{1.013e-02} & 9.817e-04 & 6.327e-04 & \cellcolor[gray]{.8} \textbf{5.901e-04} & 2.637e-02 & \cellcolor[gray]{.8} 2.489e-02 & 2.084e-02 & 2.067e-02 & 18 & 124 & 348 & 381 & 254.638 & 435.492 & 1296.349 & 1379.963 \\
                  \hhline{|~|~|~|~|-|~|~|~|~|~|~|~|~|~|~|~|~|~|~|~|~|~|~|}
					&        & 0.01  & 6.984e-03 & \vline \cellcolor[gray]{1.0} 5.091e-03 \vline & 1.594e-02 & 9.382e-03 & 1.393e-03 & 1.089e-03 & 9.057e-04 & 6.660e-04 & 1.013e-02 & 9.352e-03 & 1.133e-02 & 9.655e-03 & 49 & 207 & 153 & 59 & 300.392 & 558.922 & 839.098 & 437.614 \\
                  \hhline{|~|~|~|~|-|~|~|~|~|~|~|~|~|~|~|~|~|~|~|~|~|~|~|}
					&        & 0.001 & 6.943e-03 & 5.030e-03 & 1.013e-02 & 8.310e-03 & 1.382e-03 & 1.101e-03 & 1.503e-03 & 3.327e-04 & 9.893e-03 & 9.096e-03 & 1.104e-02 & 9.155e-03 & 38 & 234 & 62 & 154 & 283.820 & 598.549 & 487.794 & 824.306 \\
                 \hhline{|~|~|~|~|~|~|~|~|~|~|-|~|-|~|~|~|~|~|~|~|~|~|~|}
					&        & 0     & 6.963e-03 & 5.004e-03 & 9.859e-03 & 8.093e-03 & 1.374e-03 & \textbf{1.084e-03} & \textbf{1.467e-03} &  \vline \cellcolor[gray]{.8} \textbf{\textit{1.343e-04}} \vline & 9.906e-03 & \vline \cellcolor[gray]{.8} \textit{9.078e-03} \vline & 1.099e-02 & 9.108e-03 & 37 & 178 & 62 & 181 & 282.477 & 512.666 & 487.917 & 967.058 \\
					\hline
			\end{tabular}}
	\end{minipage}}
	\caption{Non-linear law tested on $\Omega = (0,1)^2$ with different mesh sizes and data sets.} \label{tab:NL}
\end{table}
The following observations can be made: 

With respect to accuracy, all algorithms deliver comparable results with \eqref{eq:PS} 
being slightly superior regarding the objective and $\err_{H^1_0}$ and \eqref{eq:DR2} being slightly superior regarding $\err_{L^2}$.
To see this more clearly, we highlight the largest differences (compared to the result of \eqref{eq:PS} and \eqref{eq:DR2}, respectively)
concerning objective value and the errors $\err_{L^2}$ and $\err_{H^1_0}$ in boldface.

With regard to the effort, the simple projection method \eqref{eq:P} is nearly always the best. This concerns the 
number of iterations as well as the consumed computing time. 
Moreover, the differences are substantial. For instance, in the line highlighted in boldface, \eqref{eq:P} is up to approximately 5, 18, and 8 times 
faster than \eqref{eq:PS}, \eqref{eq:DR1}, and \eqref{eq:DR2}, respectively.

In accordance with our theoretical findings, the objective value decreases if the noise level $\bar s$ is reduced and/or 
if the data sample set is getting larger. Concerning the noise level, this reduction is however rather moderate. 
Even the largest reduction indicated with gray background is by less than 10~\%, though $\bar s$ is reduced from 0.1 to zero.
Regarding the errors $\err_{L^2}$ and $\err_{H^1_0}$, the dependency on the noise level is indifferent. 
The largest reduction, again indicated by gray background, is about 80~\% w.r.t.\ the $L^2$-error and 65~\% for the $H^1_0$-error. 
On the other hand, there are also instances, two of them marked in light gray, where the errors not only stagnate, 
but even increase with decreasing noise. In summary, the influence of the noise level appears to be limited and all algorithms 
behave comparatively robust w.r.t.\ noisy data.

The situation changes when the sample size is changed. 
With respect to the sample size, the reduction of the objective is more significant. Here, the largest reduction, when increasing the 
sample size from 5.000 to 10.000, marked by a box, is about 80~\%. In case of the $L^2$-error, the largest reduction 
is about one order of magnitude, while the largest reduction of $\err_{H^1_0}$ is approximately 35 \%, both again marked by boxes.
In summary, the sample size has a significantly larger impact on the accuracy of the results than the noise level.

The influence of the mesh size to the $L^2$- and the $H^1_0$-error is similar to that of ``classical'' finite element simulations.
We highlight the smallest $L^2$- and $H^1_0$-error for zero noise level and maximum $|\DD_k^\loc|$ in italic type.
In case of $\err_{L^2}$, the best result is obtained by \eqref{eq:DR2}, in case of $\err_{H^1_0}$ by \eqref{eq:PS}.
In Table~\ref{tab:error_Newton}, these values are compared with a ``classical'' finite element computation.
For the latter, we discretized \eqref{SDP} with $\kappa$ as defined in \eqref{eq:kappa} 
by choosing $U_h$ from \eqref{eq:Uh} as trial and test space. The resulting nonlinear system of equation is solved by Newton's method. The relative errors are denoted by $\err_{L^2}^{\textup{FE}}$ and $\err_{H^1_0}^{\textup{FE}}$, respectively.
We moreover present the experimental order of convergence defined by
\begin{equation*}
    \texttt{EOC}_{L^2}^{\textup{FE}}
    := \frac{\log(\err_{L^2}^{\textup{FE}}(h_1)) - \log(\err_{L^2}^{\textup{FE}}(h_2))}{\log(h_1) - \log(h_2)},
\end{equation*}
where $h_1$ and $h_2$ are two mesh sizes and $\err_{L^2}^{\textup{FE}}(h_i)$, $i=1,2$, the associated relative $L^2$-errors. 
Furthermore, $\texttt{EOC}_{H^1_0}^{\textup{FE}}$ is defined analogously and $\texttt{EOC}_{L^2}^{\textup{DR2}}$ 
and $\texttt{EOC}_{H^1_0}^{\textup{PS}}$ denote the respective orders of convergence generated by the data driven approach with 
\eqref{eq:DR2} and \eqref{eq:PS}, respectively.
\begin{table}[!h]
	\resizebox{\textwidth}{!}{%
	\begin{tabular}{|c|c|c|c|c|c|c|c|c|}
		\hline
		N & $\err_{L^2}^{\textup{FE}}$ & $\texttt{EOC}_{L^2}^{\textup{FE}}$ 
		& $\err_{L^2}^{\textup{DR2}}$ & $\texttt{EOC}_{L^2}^{\textup{DR2}}$ 
		& $\err_{H^1_0}^{\textup{FE}}$ & $\texttt{EOC}_{H^1_0}^{\textup{FE}}$ 
		& $\err_{H^1_0}^{\textup{PS}}$ & $\texttt{EOC}_{H^1_0}^{\textup{PS}}$\\
		\hline
		50 & 1.601e-03 & --- & 1.598e-03 & --- & 3.143e-02 & ---  & 3.174e-02 & --- \\
		100 & 4.004e-04  & 1.9995 & 3.616e-04 & 2.1438 & 1.571e-02 & 1.0005 & 1.637e-02 & 0.9552\\
		200 & 1.001e-04 & 2.0000 & 1.343e-04 & 1.4289 & 7.854e-03  & 1.0002 & 9.078e-03 & 0.8506\\
		\hline
	\end{tabular}}
	\caption{Relative errors and experimental orders of convergence for the ``classical'' finite element solution and the 
	best results of the data-driven approach 
	in dependence of the mesh sizes $h =\frac{\sqrt{2}}{N}$.} \label{tab:error_Newton}
\end{table}
As the theory predicts, cf.\ e.g.\ \cite{BrennerScott1994}, 
Table~\ref{tab:error_Newton} shows a quadratic and a linear order of convergence for the relative errors 
$\err_{L^2}^{\textup{FE}}$ and $\err_{H^1_0}^{\textup{FE}}$, respectively.
We moreover observe that the errors produced by the data driven approach are more or less of the same size as the 
``classical'' finite element error except for the smallest mesh with $N=200$, where the error caused by the sample size 
becomes predominant compared to the error induced by the mesh.

To summarize, it is to be noted that the accuracy of all algorithms improve, if the noise level is reduced, the data sample set 
is enlarged, and the mesh is refined, with the noise level having the smallest impact on the accuracy.
We moreover observe that all algorithms yield satisfactorily results w.r.t.\ the accuracy, the best results being even comparable to 
a ``classical'' finite element computation.
Concerning the performance of the algorithms, the original projection algorithm \eqref{eq:P} turns out to be superior in the sense that it
provides an accuracy similar to the other algorithms with the fastest computing time. 
The proximal gradient method defined by \eqref{eq:PS} in average returns the most accurate results 
(in particular w.r.t.\ the $H^1_0$-seminorm) but with a greater computational effort. 
Both variants of the Douglas-Rachford algorithm also provide comparable results but one has to be cautious 
concerning the termination criterion due to the lack of fixed points.


\section{Conclusion and Outlook}

In this paper, we studied a finite element discretization of the data driven approach as introduced \cite{KirchdoerferOrtiz2016}, 
where we focused on a stationary scalar diffusion problem. We showed that the finite element error analysis can be 
incorporated into the data convergence analysis of \cite{CMO2020} as long as finite elements are used where a vanishing discretized divergence 
implies that the continuous divergence vanishes, too, cf.\ Lemma~\ref{lem:saddleh}\ref{it:divdivh}. 
In the conductivity example, i.e., our stationary scalar diffusion problem, the construction of such elements is comparatively 
simple and, for instance, Raviart-Thomas elements do the job, see Proposition~\ref{prop:RT}. 
The situation changes, if one turns to problems in elasticity, where the symmetry of the stress tensor significantly complicates the construction 
of such elements, see for instance \cite[Section~4]{braess} and \cite{arnoldbrezzidouglas}.
The considerations in Section~\ref{sec:whyHdiv} however indicate that the use of ``classical'' piecewise linear and continuous 
finite elements might not be feasible in context of the data driven approach, an aspect that gives rise to future research. 
Nevertheless, if $H(\div)$-conforming finite elements fulfilling Assumptions~\ref{assu:discret} are used, then, 
under suitable assumptions on the approximation of the local data set, see \eqref{eq:DDlocapprox}, we obtain the same 
approximation results as in \cite{CMO2020} so that (subsequences of) minimizers of the finite dimensional problems \eqref{eq:Pk2} converge in data 
to elements of the intersection $\overline{\DD} \cap \EE$.

Another issue concerns the computation of minimizers of \eqref{eq:Pk2}. As seen in Section~\ref{sec:qsap}, the discretized 
data driven problem \eqref{eq:Pk2} is equivalent to a quadratic semi-assignment problem and, as such, NP-hard, see Remark~\ref{rem:qsap}. 
There is thus no hope to find an efficient algorithm for the computation of minimizers. 
We therefore presented two heuristic approaches, one based on projections on $\EE_h$ and $\DD_h$, the other one based on 
a local search in combination with model order reduction. It turns out that, the local search is not competitive, at least for the example of Fourier's law. 
There are however plenty of parameters to adjust, such as the size of the POD basis and the neighborhood within the local search, 
and it requires further investigations to analyze if a smart choice of the parameters could result in a competitive algorithm.
With regard to the projection-type methods, the most simple alternating projection algorithm according to \cite{KirchdoerferOrtiz2016} 
turns out to be advantageous with respect to the ratio of accuracy and computational time. With regard to accuracy only, 
the modified projection method \eqref{eq:PS} and a variant of the Douglas-Rachford algorithm delivered the best results. 
If the noise level is zero and the sample size is large enough, then these results are comparable to a ``classical'' finite element 
simulation with known material law. There is however no theoretical evidence for a convergence of the projection-based methods, 
even not to any kind of fixed-point, not to mention a minimizer of \eqref{eq:Pk2}, and the examples in \cite{Kanno2019} show that this is indeed an issue.
The robustification of projection-based methods for a reliable solution of \eqref{eq:Pk2} is probably one of the most important open problems 
in the context of data driven numerics.


\begin{thebibliography}{10}

\bibitem{ACT20}
F.~J. Aragón~Artacho, R.~Campoy, and M.~K. Tam.
\newblock The douglas–rachford algorithm for convex and nonconvex feasibility
  problems.
\newblock {\em Mathematical Methods of Operations Research}, 91(2), 2020.

\bibitem{arnoldbrezzidouglas}
D.N. Arnold, F.~Brezzi, and J.~Douglas.
\newblock {PEERS}: {A} new mixed finite element for plane elasticity.
\newblock {\em Japan J. Appl. Math.}, 1:347--367, 1984.

\bibitem{Beck2017}
A.~Beck.
\newblock {\em First-Order Methods in Optimization}.
\newblock Society for Industrial and Applied Mathematics, Philadelphia, PA,
  2017.

\bibitem{braess}
D.~Braess.
\newblock {\em Finite Elements: Theory, Fast Solvers, and Applications in Solid
  Mechanics}.
\newblock Cambridge University, 3. edition, 2007.

\bibitem{BrennerScott1994}
C.S. Brenner and R.L. Scott.
\newblock {\em The Mathematical Theory of Finite Element Methods}.
\newblock Springer-Verlag, New York, 1994.

\bibitem{Buchheim2018Quadratic}
C.~Buchheim and E.~Traversi.
\newblock Quadratic combinatorial optimization using separable underestimators.
\newblock {\em INFROMS Journal on Computing}, 30(3):424--437, 2018.

\bibitem{Carrara2020}
P.~Carrara, L.~{De Lorenzis}, L.~Stainier, and M.~Ortiz.
\newblock Data-driven fracture mechanics.
\newblock {\em Computer Methods in Applied Mechanics and Engineering},
  372:113390, 2020.

\bibitem{CV2020}
C.~Clason and T.~Valkonen.
\newblock Introduction to nonsmooth analysis and optimization, 2020.

\bibitem{CMO2018}
S.~Conti, S.~M{\"u}ller, and M.~Ortiz.
\newblock Data-driven problems in elasticity.
\newblock {\em Archive for Rational Mechanics and Analysis}, 229(1):79--123,
  Jan 2018.

\bibitem{CMO2020}
S.~Conti, S.~M{\"u}ller, and M.~Ortiz.
\newblock Data-driven finite elasticity.
\newblock {\em Archive for Rational Mechanics and Analysis}, 237(1):1--33,
  2020.

\bibitem{duran}
R.~G. Dur{\'a}n.
\newblock Mixed finite element methods.
\newblock In {\em Mixed Finite Elements, Compatibility Conditions, and
  Applications: Lectures given at the C.I.M.E. Summer School held in Cetraro,
  Italy June 26--July 1, 2006}, pages 1--44. Springer Berlin Heidelberg, 2008.

\bibitem{Eggersmann2019}
R.~Eggersmann, T.~Kirchdoerfer, S.~Reese, L.~Stainier, and M.~Ortiz.
\newblock Model-free data-driven inelasticity.
\newblock {\em Computer Methods in Applied Mechanics and Engineering},
  350:81--99, 2019.

\bibitem{Eggersmann2021}
R.~Eggersmann, L.~Stainier, M.~Ortiz, and S.~Reese.
\newblock Model-free data-driven computational mechanics enhanced by tensor
  voting.
\newblock {\em Computer Methods in Applied Mechanics and Engineering},
  373:113499, 2021.

\bibitem{Friedman1977AlgBestMatches}
Jerome~H. Friedman, Jon~Louis Bentley, and Raphael~Ari Finkel.
\newblock An algorithm for finding best matches in logarithmic expected time.
\newblock 3(3), 1977.

\bibitem{Kahlbacher2007GalerkinPOD}
Martin Kahlbacher and Stefan Volkwein.
\newblock Galerkin proper orthogonal decomposition methods for parameter
  dependent elliptic systems.
\newblock {\em Discussiones Mathematicae, Differential Inclusions, Control and
  Optimization}, 27(1):95–117, 2007.

\bibitem{Kanno2018}
Y.~Kanno.
\newblock Data-driven computing in elasticity via kernel regression.
\newblock {\em Theoretical and Applied Mechanics Letters}, 8:361--365, Dec
  2018.

\bibitem{Kanno_Local_Regression2018}
Y.~Kanno.
\newblock Simple heuristic for data-driven computational elasticity with
  material data involving noise and outliers: a local robust regression
  approach.
\newblock {\em Japan Journal of Industrial and Applied Mathematics},
  3(1085--1101), 2018.

\bibitem{Kanno2019}
Y.~Kanno.
\newblock Mixed-integer programming formulation of a data-driven solver in
  computational elasticity.
\newblock {\em Optimization Letters}, 13(7):1505--1514, 2019.

\bibitem{KirchdoerferOrtiz2016}
T.~Kirchdoerfer and M.~Ortiz.
\newblock Data-driven computational mechanics.
\newblock {\em Computer Methods in Applied Mechanics and Engineering},
  304:81--101, 2016.

\bibitem{Kirchdoerfer2017}
T.~Kirchdoerfer and M.~Ortiz.
\newblock Data-driven computing in dynamics.
\newblock {\em International Journal for Numerical Methods in Engineering},
  113, Jun 2017.

\bibitem{NatemeyerWachsmuth2021}
C.~Natemeyer and D.~Wachsmuth.
\newblock A proximal gradient method for control problems with non-smooth and
  non-convex control cost.
\newblock {\em Computational Optimization and Applications}, 80(2):639--677,
  2021.

\bibitem{NK18}
L.T.K. Nguyen and M.-A. Kneip.
\newblock A data-driven approach to nonlinear elasticity.
\newblock {\em Computers {\&} Structures}, 194:97--115, 2018.

\bibitem{PBS21}
K.~Poelstra, T.~Bartel, and B.~Schweizer.
\newblock A data driven framework for evolutionary problems in solid mechanics.
\newblock No. 2021-02, Preprints der Fakult{\"a}t f{\"u}r Mathematik, TU
  Dortmund, 2021.

\bibitem{RS20}
M.~R{\"o}ger and B.~Schweizer.
\newblock Relaxation analysis in a data driven problem with a single outlier.
\newblock {\em Calc. Var.}, 59(119), 2020.

\bibitem{Sahni1976Pcompleteapprox}
S.~Sahni and T.~Gonzalez.
\newblock P-complete approximation problems.
\newblock {\em Journal of the ACM (JACM)}, 23(3):555--565, 1976.

\bibitem{temam}
R.~Temam.
\newblock {\em {N}avier-{S}tokes Equations}.
\newblock North-Holland, Amsterdam, New-York, Oxford, 1977.

\bibitem{Ullmann2016PODadaptive}
Sebastian Ullmann, Marko Rotkvic, and Jens Lang.
\newblock Pod-galerkin reduced-order modeling with adaptive finite element
  snapshots.
\newblock {\em Journal of Computational Physics}, 325:244--258, 2016.

\bibitem{Wachsmuth2019}
D.~Wachsmuth.
\newblock Iterative hard-thresholding applied to optimal control problems with
  {$L^0(\Omega)$} control cost.
\newblock {\em SIAM J.\ Control Optim.}, 57(2):854--879, 2019.

\end{thebibliography}
\bibliographystyle{plain}

\end{document}